\DeclareMathAlphabet{\eusm}{OT1}{eusm}{m}{n}
\newtheorem{theor}{Theorem}[section]
\newtheorem{prop}[theor]{Proposition}
\newtheorem{defi}[theor]{Definition}
\newtheorem{cor}[theor]{Corollary}
\newtheorem{lem}[theor]{Lemma}
\newtheorem{exam}[theor]{Example}
\newtheorem{remark}[theor]{Remark}
\begin{document}

\title[Relatively divisible and relatively flat objects]
{Relatively divisible and relatively flat objects \\ in exact categories}

\subjclass[2010]{18E10, 18G50, 16D90} 

\keywords{Exact category, divisible object, flat object, cotorsion pair, Galois connection.}

\author[S. Crivei]{Septimiu Crivei}
\address {Faculty of Mathematics and Computer Science, Babe\c{s}-Bolyai University, 400084 Cluj-Napoca, Romania}
\email{crivei@math.ubbcluj.ro}

\author[D. Kesk\.{i}n T\"ut\"unc\"u]{Derya Kesk\.{i}n T\"ut\"unc\"u}
\address {Department of Mathematics, Hacettepe University, 06800 Ankara, Turkey} \email{keskin@hacettepe.edu.tr}

\begin{abstract} We introduce and study relatively divisible and relatively flat objects in exact categories in the
sense of Quillen. For every relative cotorsion pair $(\mathcal{A},\mathcal{B})$ in an exact category $\mathcal{C}$, 
$\mathcal{A}$ coincides with the class of relatively flat objects of $\mathcal{C}$ for some relative projectively generated
exact structure, while $\mathcal{B}$ coincides with the class of relatively divisible objects of $\mathcal{C}$ for some relative 
injectively generated exact structure. 
We exhibit Galois connections between relative cotorsion pairs, relative projectively generated exact
structures and relative injectively generated exact structures in additive categories. We establish closure properties
and characterizations in terms of approximation theory. 
\end{abstract}

\date{October 27, 2018}

\maketitle

\section{Introduction}

Many important characterizations of rings are of homological algebra nature. Some of the best known examples are the
following ones: a ring $R$ is semisimple if and only if every short exact sequence of right (or left) $R$-modules
splits; a ring $R$ is von Neumann regular if and only if every short exact sequence of right (or left) $R$-modules is
pure; a ring $R$ is right pure-semisimple if and only if every pure short exact sequence of right $R$-modules splits.
These examples exhibit three classes of short exact sequences in a module category, namely split short exact
sequences, pure short exact sequences and all short exact sequences. It has been noticed years ago that such classes of
short exact sequences share some properties, that can be considered as axioms for a more general notion, called
\emph{exact structure} (Keller \cite{Keller}, Quillen \cite{Q}) or \emph{proper class} (Buchsbaum \cite{Buch}) on an
additive category. It turns out that this context of exact structures on additive categories is a suitable one for
developing homological algebra in categories more general than module categories or abelian categories. First, it
encompasses non-abelian categories, which appear naturally in algebraic geometry (Rosenberg \cite{Rosenberg}),
functional analysis (Frerick, Sieg \cite{FS}) etc. Secondly, they allow the use of relative homological algebra in
abelian categories (Enochs, Jenda \cite{EJ}, Garkusha \cite{G04}). 

As in the above examples, one may have many exact structures on the same category, and it is useful to find
relationships between them as well as characterizations of objects of the category in terms of such exact structures
defined on it. We mention a couple of other examples, out of many having the root in abelian group theory. Recall that a
short exact sequence of right $R$-modules $0\to A\to B\to C\to 0$ is called \emph{neat} if every simple right $R$-module
is projective with respect to it, and \emph{closed} if $A$ is isomorphic to a complement (closed) submodule of $B$
(Renault \cite{Renault}). Then every closed short exact sequence of right $R$-modules is neat, while the converse holds
if and only if $R$ is a right $C$-ring in the sense of \cite{Renault}. Also, a short exact sequence of right $R$-modules
is called \emph{finitely split} if every finitely generated right $R$-module is projective with respect to it (Azumaya
\cite{Az87}). Then every finitely split short exact sequence of right $R$-modules is pure, while the converse holds if
and only if $R$ is a right Noetherian ring \cite[Proposition~6]{Az87}. Further examples of classes of short exact
sequences giving exact structures on module categories include \emph{coneat} (Fuchs \cite{Fuc}), \emph{$s$-pure} (I.
Crivei, S. Crivei \cite{CC}) or \emph{supplement} (Mermut \cite{Engin}) short exact sequences.

To each pair of exact structures $\mathcal{D}$ and $\mathcal{E}$ on an additive category $\mathcal{C}$ there are some
associated objects. An object $X$ of $\mathcal{C}$ is called \emph{$\mathcal{D}$-$\mathcal{E}$-divisible} if every short
exact sequence from $\mathcal{D}$ (called \emph{$\mathcal{D}$-conflation}) starting with $X$ belongs to $\mathcal{E}$,
and \emph{$\mathcal{D}$-$\mathcal{E}$-flat} if every $\mathcal{D}$-conflation ending with $X$ belongs to $\mathcal{E}$.
They are inspired by Sklyarenko's modules of flat type relative to a proper class \cite{Skly}, and have been
reconsidered by Preisser Monta\~no \cite[15.4,15.11]{Montano} in abelian categories. One of the two exact structures
$\mathcal{D}$ and $\mathcal{E}$ is often that given by the class of all short exact sequences in the category, but we
point out that this class fails to form an exact structure unless the underlying category is not quasi-abelian (see
Example \ref{ex1}). Examples of $\mathcal{D}$-$\mathcal{E}$-divisible objects in suitable categories include: injective,
pure-injective, absolutely pure (i.e., every short exact sequence starting with it is pure \cite{Maddox,Megibben}), finitely
injective (i.e., every short exact sequence starting with it is finitely split \cite{Az87}), finitely pure-injective
(i.e., every pure short exact sequence starting with it is finitely split \cite{Az87}), absolutely neat (i.e., every
short exact sequence starting with it is neat \cite{Sep}), absolutely coneat (i.e., every short exact sequence starting
with it is coneat \cite{BD14}), absolutely $s$-pure (i.e., every short exact sequence starting with it is $s$-pure
\cite{CC}), weak injective (i.e., every short exact sequence starting with it is closed \cite{Z06}) objects etc. Each of
the above situations generates a corresponding notion of $\mathcal{D}$-$\mathcal{E}$-flat object, namely projective,
pure-projective, flat, finitely projective \cite{Az87}, finitely pure-projective \cite{Az87}, neat-flat \cite{BD16},
coneat-flat \cite{BD14}, max-flat \cite{Xiang} and weak flat \cite{Z13} object respectively. 

The objective of the present paper is to give a unified treatment of relative divisibility and relative flatness in the
natural setting of exact categories. Our main motivating examples come from module theory, but we also present some
application to finitely accessible additive categories, which may not be abelian in general. We do not intend to
give an exhaustive study of the concepts of relative divisibility and relative flatness, but rather to set the stage
with some relevant properties and illustrations, and to leave the interested reader to deduce herself/himself the needed
consequences. Most of the results will have two parts, one related to relative divisibility and one related to relative
flatness, out of which we will only prove one, the other following in a dual manner.

Our paper is organized as follows. We recall in Section 2 the axioms of an exact structure on an additive
category in the sense of Quillen, as refined by Keller. We also present some needed concepts, such as
relative projectively and relative injectively generated exact structures, relative cotorsion pairs and approximation
theory by relative covers and relative envelopes. 

In Section 3 we show how the classes $Div(\mathcal{D}\textrm{-}\mathcal{E})$ and
$Flat(\mathcal{D}\textrm{-}\mathcal{E})$ of relatively divisible and relatively flat objects respectively allow some
natural constructions of two Galois connections, generalizing the original work by Salce \cite{Salce}. More precisely,
let $\mathcal{C}$ be an additive category with an exact structure $\mathcal{D}$, and denote by $DCot(\mathcal{C})$ the
class of $\mathcal{D}$-cotorsion pairs in $\mathcal{C}$. Also, denote by $DPEx(\mathcal{C})$ and $DIEx(\mathcal{C})$
the classes of $\mathcal{D}$-projectively and $\mathcal{D}$-injectively generated exact structures on
$\mathcal{C}$ respectively. Then we prove that there are a monotone Galois connection between $DPEx(\mathcal{C})$ and
$DCot(\mathcal{C})$, and an antitone Galois connection between $DIEx(\mathcal{C})$ and $DCot(\mathcal{C})$. Moreover, we
extend and dualize the notion of injectively generated Xu exact structure, and we show that the above Galois connections
restrict to bijective correspondences between $\mathcal{D}$-cotorsion pairs in $\mathcal{C}$, $\mathcal{D}$-projectively
generated Xu exact structures on $\mathcal{C}$, and $\mathcal{D}$-injectively generated Xu exact structures on
$\mathcal{C}$.

Section 4 contains the main properties of relatively divisible and relatively flat objects in additive categories. Let
$\mathcal{C}$ be an additive category, and let $\mathcal{D}$ and $\mathcal{E}$ be exact structures on $\mathcal{C}$. For
a class $\mathcal{A}$ of objects of $\mathcal{C}$, we define an object $X$ of $\mathcal{C}$ to be
\emph{$\mathcal{D}$-$\mathcal{E}$-$\mathcal{A}$-divisible} if every $\mathcal{D}$-conflation $X\rightarrowtail
A\twoheadrightarrow B$ is an $\mathcal{E}$-conflation, and \emph{$\mathcal{D}$-$\mathcal{E}$-$\mathcal{A}$-flat} in a
dual manner. We show several closure properties of the classes of $\mathcal{D}$-$\mathcal{E}$(-$\mathcal{A}$)-divisible
and $\mathcal{D}$-$\mathcal{E}$(-$\mathcal{A}$)-flat objects of $\mathcal{C}$. Their level of generality allows many
useful consequences, such as the equivalence of the $\mathcal{D}$-resolving property of $\mathcal{A}$ and the
$\mathcal{D}$-coresolving property of $\mathcal{B}$ for every $\mathcal{D}$-cotorsion pair $(\mathcal{A},\mathcal{B})$
in $\mathcal{C}$, provided $\mathcal{C}$ has enough $\mathcal{D}$-injectives and enough $\mathcal{D}$-projectives. We
also characterize $\mathcal{D}$-$\mathcal{E}$(-$\mathcal{A}$)-divisible
and $\mathcal{D}$-$\mathcal{E}$(-$\mathcal{A}$)-flat objects of $\mathcal{C}$ in terms of the existence of certain
conflations involving them. 

Applications of our results are included in a separate paper \cite{CK2}.

\section{Exact categories}

We shall use the following concept of exact category given by Quillen \cite{Q} (also see Buchsbaum \cite{Buch}), as
simplified by Keller \cite{Keller}.

\begin{defi} \rm An \emph{exact category} is an additive category $\mathcal{C}$ endowed with a
distinguished class $\mathcal{E}$ of short exact sequences (i.e. kernel-cokernel pairs) satisfying the axioms $[E0]$,
$[E1]$, $[E2]$ and $[E2^{\rm op}]$ below. Then $\mathcal{E}$ is called an \emph{exact structure} on $\mathcal{C}$. The
short exact sequences in $\mathcal{E}$ are called \emph{$\mathcal{E}$-conflations} (or simply \emph{conflations}),
while kernels and cokernels appearing in such exact sequences are called \emph{inflations} (denoted by
$\rightarrowtail$) and \emph{deflations} (denoted by $\twoheadrightarrow$) respectively. \vskip2mm

$[E0]$ The identity morphism $1_0:0\to 0$ is a deflation.

$[E1]$ The composition of two deflations is again a deflation.

$[E2]$ The pullback of a deflation along an arbitrary morphism exists and is again a deflation.

$[E2^{\rm op}]$ The pushout of an inflation along an arbitrary morphism exists and is again an inflation.
\end{defi}

\begin{remark} \label{r:max} \rm The duals of the axioms $[E0],[E1]$ on inflations as well as both sides of
Quillen's ``obscure axiom'' hold in any exact category \cite{Keller}. The version of the latter for
inflations states that if $i,p$ are morphisms such that $i$ has a cokernel and $pi$ is an inflation,
then $i$ is an inflation. 
\end{remark}

\begin{exam} \label{ex1}

\rm (1) Any additive category has a unique minimal exact structure, whose conflations are the split short
exact sequences (e.g., see \cite{Buhler}).

(2) An additive category is called \emph{quasi-abelian} if it is pre-abelian (i.e., it has kernels
and cokernels), any pushout of a kernel along an arbitrary morphism is a kernel, and any pullback of a cokernel along an
arbitrary morphism is a cokernel. Any quasi-abelian category, and in particular any abelian category, has a unique
maximal exact structure, whose conflations are the kernel-cokernel pairs \cite{Rump01}. 

(3) An additive category is called \emph{weakly idempotent complete} if every split monomorphism has a cokernel, or
equivalently, every split epimorphism has a kernel (e.g., see \cite{Buhler}). A kernel (cokernel) in an additive
category is called \emph{semi-stable} if its pushout (pullback) along any morphism exists and is again a kernel
(cokernel) (see \cite[Definition~2.4]{C12}). Any weakly idempotent complete additive category has a unique maximal exact
structure, whose conflations are the kernel-cokernel pairs consisting of a semi-stable kernel and a semi-stable cokernel
\cite[Theorem~3.5]{C12}. Note that every additive category has an idempotent-splitting (Karoubian) completion (see
\cite[p.~75]{Karoubi}), which is weakly idempotent complete. 
\end{exam}

\begin{defi} \rm Let $\mathcal{C}$ be an additive category with an exact structure $\mathcal{E}$. An object $X$ of
$\mathcal{C}$ is called \emph{$\mathcal{E}$-injective} if every $\mathcal{E}$-conflation starting with $X$ splits.
Equivalently, $X$ is $\mathcal{E}$-injective if and only if ${\rm Ext}^1_{\mathcal{E}}(C,X)=0$ for every object $C$ of
$\mathcal{C}$. The category $\mathcal{C}$ is said to have enough \emph{$\mathcal{E}$-injectives} if for every object $X$
of $\mathcal{C}$ there exists an $\mathcal{E}$-conflation $X\rightarrowtail Y\twoheadrightarrow Z$ with $Y$
$\mathcal{E}$-injective. 

Dually, one defines \emph{$\mathcal{E}$-projective} objects and categories having enough $\mathcal{E}$-projectives.
\end{defi}

The reader is referred to \cite[Section~11]{Buhler} for further properties of injective and projective objects in exact
categories. 

The following propositions can be easily deduced (also see \cite[Section~4]{Montano} and \cite{St67}). They provide
ways to reflect exact structures via functors.

\begin{prop} Let $F:\mathcal{C}\to \mathcal{C}'$ be a covariant (respectively contravariant) additive functor between 
additive categories $\mathcal{C}$ and $\mathcal{C}'$ with exact structures $\mathcal{D}$ and $\mathcal{D}'$
respectively. Then the class $\mathcal{E}_F$ of $\mathcal{D}$-conflations $X\to Y\to Z$ in $\mathcal{C}$ such that
$F(X)\rightarrowtail F(Y)\twoheadrightarrow F(Z)$ $(\textrm{respectively } F(Z)\rightarrowtail F(Y)\twoheadrightarrow
F(X))$ is a $\mathcal{D}'$-conflation in $\mathcal{C}'$ defines an exact structure on $\mathcal{C}$.
\end{prop}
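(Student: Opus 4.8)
The plan is to verify the exact-structure axioms $[E0]$, $[E1]$, $[E2]$, $[E2^{\rm op}]$ for $\mathcal{E}_F$; by symmetry it suffices to treat the covariant case. The bookkeeping points are immediate: $\mathcal{E}_F$ consists of $\mathcal{D}$-conflations, hence of kernel-cokernel pairs, and it is closed under isomorphism because $F$ preserves isomorphisms and being a $\mathcal{D}'$-conflation is isomorphism-invariant. For $[E0]$, the identity $1_0$ is a $\mathcal{D}$-deflation and, since $F$ is additive, the split conflation $0\rightarrowtail 0\twoheadrightarrow 0$ is carried to a split, hence $\mathcal{D}'$-, conflation; more generally every split conflation lies in $\mathcal{E}_F$.

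The pullback axiom $[E2]$ (and dually $[E2^{\rm op}]$) is the substantial part. Given an arbitrary morphism $f\colon Z'\to Z$, I would factor it as the split monomorphism $\binom{f}{1}\colon Z'\to Z\oplus Z'$ followed by the split epimorphism $Z\oplus Z'\to Z$, and use pullback-pasting to reduce $[E2]$ for $\mathcal{E}_F$ to the two cases in which $f$ is a split epimorphism or a split monomorphism. When $f$ is a split epimorphism, the pullback of an $\mathcal{E}_F$-conflation along $f$ is, up to isomorphism, the direct sum of that conflation with an identity conflation, and $F$ being additive it again lies in $\mathcal{E}_F$. The remaining case is $f$ a split monomorphism, say $Z=Z'\oplus W$ with $f=\iota_{Z'}$: writing the given $\mathcal{E}_F$-conflation as $A\rightarrowtail Y\xrightarrow{p}Z$ and putting $q=\pi_W p\colon Y\twoheadrightarrow W$ (a $\mathcal{D}$-deflation by $[E1]$ for $\mathcal{D}$), the pullback conflation is $A\rightarrowtail Y'\xrightarrow{p'}Z'$ with $Y'=\ker q$ and $p'=p|_{Y'}$, and one also has the $\mathcal{D}$-conflation $Y'\rightarrowtail Y\xrightarrow{q}W$. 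Now $F(q)=\pi_{F(W)}F(p)$ is the composite of the $\mathcal{D}'$-deflation $F(p)$ (obtained from the $\mathcal{E}_F$-conflation) with a split epimorphism, hence is a $\mathcal{D}'$-deflation whose kernel $L$ fits into a $\mathcal{D}'$-conflation $F(A)\rightarrowtail L\twoheadrightarrow F(Z')$; what must be shown is that the canonical comparison morphism $F(Y')\to L$ is an isomorphism, equivalently that $F$ carries the pulled-back conflation $A\rightarrowtail Y'\xrightarrow{p'}Z'$ to a $\mathcal{D}'$-conflation. I expect this identification to be the main obstacle; it is the kind of reflection statement underlying \cite[Section~4]{Montano} and \cite{St67}, and it is reached by a diagram chase in $\mathcal{C}'$ (the short five lemma together with the $3\times 3$-lemma for exact categories, applied to a diagram assembled from the $\mathcal{E}_F$-conflation, the conflation $Y'\rightarrowtail Y\xrightarrow{q}W$, and split conflations).

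Granting $[E2]$ and $[E2^{\rm op}]$, axiom $[E1]$ follows formally. Let $p\colon Y\twoheadrightarrow Z$ and $q\colon Z\twoheadrightarrow W$ be $\mathcal{E}_F$-deflations; then $qp$ is a $\mathcal{D}$-deflation, and the standard $\mathcal{D}$-conflation $\ker p\rightarrowtail\ker(qp)\twoheadrightarrow\ker q$ is exactly the pullback of the $\mathcal{E}_F$-conflation $\ker p\rightarrowtail Y\xrightarrow{p}Z$ along the inflation $\ker q\rightarrowtail Z$, so by $[E2]$ it lies in $\mathcal{E}_F$. Consider the commutative $3\times 3$-diagram in $\mathcal{C}$ whose three rows are $\ker p\rightarrowtail\ker(qp)\twoheadrightarrow\ker q$, the $\mathcal{E}_F$-conflation $\ker p\rightarrowtail Y\xrightarrow{p}Z$, and $0\rightarrowtail W\twoheadrightarrow W$, whose outer columns are $\ker p\xrightarrow{1}\ker p\twoheadrightarrow 0$ and the $\mathcal{E}_F$-conflation $\ker q\rightarrowtail Z\xrightarrow{q}W$, and whose middle column is $\ker(qp)\rightarrowtail Y\xrightarrow{qp}W$. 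Applying $F$, all three rows and both outer columns become $\mathcal{D}'$-conflations, so by the $3\times 3$-lemma the middle column $F(\ker(qp))\rightarrowtail F(Y)\xrightarrow{F(qp)}F(W)$ is a $\mathcal{D}'$-conflation; that is, $qp$ is an $\mathcal{E}_F$-deflation. The contravariant statement follows by dualizing every step.
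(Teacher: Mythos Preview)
The paper itself gives no argument: it merely states that the result ``can be easily deduced'' and points to \cite[Section~4]{Montano} and \cite{St67}. So there is nothing to compare your route to; what matters is whether your outline actually closes.

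Your reduction of $[E2]$ (factor $f$ as a split mono followed by a split epi and use pullback pasting) is sound, and the split-epi case is handled correctly: the pulled-back conflation is the direct sum of the given one with an identity conflation, and additivity of $F$ finishes it. Your derivation of $[E1]$ from $[E2]$ via the $3\times 3$-lemma is also correct, since the three rows and the two outer columns of your $F$-image diagram are known to be $\mathcal{D}'$-conflations, which is exactly the hypothesis of the $3\times 3$-lemma.

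The gap is precisely where you flag it. In the split-mono case you end up with a commutative diagram in $\mathcal{C}'$
\[
\xymatrix{
F(A) \ar[r]^{F(i')} \ar@{=}[d] & F(Y') \ar[r]^{F(p')} \ar[d]^{\phi} & F(Z') \ar@{=}[d] \\
F(A) \ar@{>->}[r] & L \ar@{->>}[r] & F(Z')
}
\]
and you want $\phi$ to be an isomorphism. The short five lemma does not apply: it needs the top row to be a conflation, which is exactly what you are trying to prove. The $3\times 3$-lemma does not apply either: in the $3\times 3$ square built from the $\mathcal{E}_F$-conflation, the split conflation $F(Z')\rightarrowtail F(Z)\twoheadrightarrow F(W)$, and the middle column $F(Y')\to F(Y)\to F(W)$, you only know two rows and two columns to be $\mathcal{D}'$-conflations, whereas the lemma requires all three rows (or all three columns). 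No amount of diagram chasing inside $\mathcal{C}'$ can recover $F(Y')$ from this data, because $F$ is not assumed to preserve the kernel $Y'=\ker q$; the object $F(Y')$ is simply not determined by the conflations you have in hand.

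This is exactly why the classical sources you cite impose a half-exactness hypothesis. If $F$ is left exact, then $F(Y')=F(\ker q)\cong\ker F(q)=L$ and $\phi$ is the identity; if $F$ is right exact, one checks directly that $F(i')$ is monic (because $F(i)=F(j)F(i')$ is) and that $F(p')$ is its cokernel, so the top row is a conflation and the five lemma then applies. The Hom functors used in the next proposition are of this type, so the paper's actual applications are safe; but for an arbitrary additive $F$ the step you isolate remains open, and your sketch does not close it.
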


\begin{prop} \label{p:projinjgen} Let $\mathcal{C}$ be an additive category with an exact structure $\mathcal{D}$ and
${\rm Ab}$ the category of abelian groups. For any object $M$ of $\mathcal{C}$, consider the additive functors $H={\rm
Hom}_{\mathcal{C}}(M,-)$, $G={\rm Hom}_{\mathcal{C}}(-,M):\mathcal{C}\to {\rm Ab}$. For a class $\mathcal{M}$ of objects
of $\mathcal{C}$, denote \[\mathcal{E}_H^{\mathcal{M}}=\bigcap \{\mathcal{E}_H \mid M\in \mathcal{M}\} \textrm{ and }
\mathcal{E}_G^{\mathcal{M}}=\bigcap \{\mathcal{E}_G \mid M\in \mathcal{M}\}.\] Then $\mathcal{E}_H^{\mathcal{M}}$ and
$\mathcal{E}_G^{\mathcal{M}}$ are exact structures on $\mathcal{C}$, which are called the exact structures {\it
$\mathcal{D}$-projectively generated by $\mathcal{M}$} and {\it $\mathcal{D}$-injectively generated by $\mathcal{M}$}
respectively, and are also denoted by $\pi^{-1}_{\mathcal{D}}(\mathcal{M})$ and $\iota^{-1}_{\mathcal{D}}(\mathcal{M})$
respectively.
\end{prop}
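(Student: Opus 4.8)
The plan is to verify the four exact-structure axioms $[E0]$, $[E1]$, $[E2]$, $[E2^{\rm op}]$ for $\mathcal{E}_H^{\mathcal{M}}$; the statement for $\mathcal{E}_G^{\mathcal{M}}$ will then follow by the dual argument applied to the opposite category. Since an intersection of a family of exact structures (all contained in $\mathcal{D}$) is again an exact structure by a routine check, it suffices to prove that each single $\mathcal{E}_H = \mathcal{E}_H^{\{M\}}$ is an exact structure on $\mathcal{C}$; but this is precisely the content of the previous Proposition applied to the covariant additive functor $H = \Hom_{\mathcal{C}}(M,-) : \mathcal{C} \to \Ab$, with $\mathcal{D}' $ the maximal (abelian) exact structure on $\Ab$. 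So the real work is to confirm that the intersection of an arbitrary family of exact substructures of a fixed exact structure $\mathcal{D}$ is again an exact structure.

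First I would record that each $\mathcal{E}_H$ consists of those $\mathcal{D}$-conflations $X \rightarrowtail Y \twoheadrightarrow Z$ whose image $H(X) \to H(Y) \to H(Z)$ is a short exact sequence of abelian groups; equivalently, since $H$ is left exact on $\mathcal{D}$-conflations (the functor $\Hom_{\mathcal{C}}(M,-)$ preserves kernels), $\mathcal{E}_H$ is the class of $\mathcal{D}$-conflations along which $H$ is exact, i.e. $H$ sends the deflation to a surjection. Then $\mathcal{E}_H^{\mathcal{M}} = \bigcap_{M \in \mathcal{M}} \mathcal{E}_H$ is the class of $\mathcal{D}$-conflations along which every $H = \Hom_{\mathcal{C}}(M,-)$, $M \in \mathcal{M}$, is exact. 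Now I check the axioms directly for the intersection. For $[E0]$: the split conflation $0 \rightarrowtail 0 \twoheadrightarrow 0$ lies in every $\mathcal{E}_H$, hence in the intersection, so $1_0$ is a deflation. For $[E1]$: if $p_1, p_2$ are composable deflations in $\mathcal{E}_H^{\mathcal{M}}$, then each is a $\mathcal{D}$-deflation, so $p_1 p_2$ is a $\mathcal{D}$-deflation; and for each $M \in \mathcal{M}$, $H(p_1)$ and $H(p_2)$ are surjective, hence so is $H(p_1 p_2) = H(p_1) H(p_2)$; thus $p_1 p_2 \in \mathcal{E}_H^{\mathcal{M}}$.

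For $[E2]$: given a deflation $p : Y \twoheadrightarrow Z$ in $\mathcal{E}_H^{\mathcal{M}}$ and an arbitrary morphism $f : Z' \to Z$, the pullback exists and is a $\mathcal{D}$-deflation $p' : Y' \twoheadrightarrow Z'$ because $p$ is a $\mathcal{D}$-deflation. The point is to see $p' \in \mathcal{E}_H^{\mathcal{M}}$: the kernel of $p'$ is (canonically isomorphic to) the kernel $X$ of $p$, so applying $H$ and using that $H$ preserves pullbacks (it is representable) together with left exactness, one gets a commutative diagram with exact rows $H(X) \rightarrowtail H(Y') \to H(Z')$ and $H(X) \rightarrowtail H(Y) \twoheadrightarrow H(Z)$ in which the left-hand square is a pullback; since $H(p)$ is surjective, surjectivity of $H(p')$ follows (pullback of an epimorphism along any map of abelian groups is an epimorphism). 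Dually, $[E2^{\rm op}]$ follows from the fact that the pushout of a $\mathcal{D}$-inflation is a $\mathcal{D}$-inflation and that $H$, being additive, sends the pushout square to a pushout square of abelian groups with the relevant map still a monomorphism (or, more simply, one invokes Proposition~2.x for the single functor $H$ and then observes the intersection inherits $[E2^{\rm op}]$ by the same componentwise argument as $[E2]$). The main obstacle, and the only place needing care, is precisely this compatibility of the representable functor $H$ with pullbacks and pushouts taken in $\mathcal{C}$ versus in $\Ab$; once that is in hand, all axioms for the intersection reduce to the corresponding axioms holding simultaneously in each $\mathcal{E}_H$, which is immediate. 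Finally, the assertions for $\mathcal{E}_G^{\mathcal{M}}$ follow by applying everything above to $\mathcal{C}^{\op}$, noting $\Hom_{\mathcal{C}}(-,M) = \Hom_{\mathcal{C}^{\op}}(M,-)$ and that $\mathcal{D}$ induces an exact structure $\mathcal{D}^{\op}$ on $\mathcal{C}^{\op}$.
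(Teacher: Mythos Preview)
The paper does not give an explicit proof of this proposition; it simply states that it ``can be easily deduced'' and refers the reader to \cite[Section~4]{Montano} and \cite{St67}. Your approach---reducing to the previous proposition for each single $M$ and then taking intersections---is exactly the intended route, and your verification of $[E0]$, $[E1]$, $[E2]$ is correct. In particular, your use of the fact that $\Hom_{\mathcal{C}}(M,-)$ preserves pullbacks (being representable, hence limit-preserving) is the right idea for $[E2]$.

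There is one small slip in your treatment of $[E2^{\rm op}]$. The claim that ``$H$, being additive, sends the pushout square to a pushout square of abelian groups'' is not correct: additive functors preserve finite biproducts but not pushouts in general, and $\Hom_{\mathcal{C}}(M,-)$ is a limit-preserving, not colimit-preserving, functor. Fortunately your parenthetical alternative is exactly the right fix: once you know each $\mathcal{E}_H$ is an exact structure (by the previous proposition), $[E2^{\rm op}]$ for the intersection follows componentwise, since the pushout of an inflation exists and is computed once in $\mathcal{C}$, and is then an $\mathcal{E}_H$-inflation for every $M\in\mathcal{M}$. So the cleanest write-up is simply: each $\mathcal{E}_H$ is an exact structure by the preceding proposition, and any intersection of exact substructures of $\mathcal{D}$ is again an exact structure by a direct check of the axioms (each axiom being inherited because pullbacks and pushouts are unique up to isomorphism). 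Drop the erroneous direct argument for $[E2^{\rm op}]$ and you are done.
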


When all kernel-cokernel pairs form an exact structure $\mathcal{D}$ on an additive category $\mathcal{C}$, we omit
$\mathcal{D}$ in Proposition \ref{p:projinjgen} and we talk about \emph{projectively} and \emph{injectively}
generated exact structures. 

We recall some categorical results on pushouts and pullbacks for easier reference. 

\begin{lem} \cite[Proposition~2.12]{Buhler} \label{l:POPB} Let $\mathcal{C}$ be an additive category with an exact
structure.
\begin{enumerate}
\item Consider the following diagram in $\mathcal{C}$, where the rows are conflations: 
$$\SelectTips{cm}{}
\xymatrix{
X \ar@{>->}[r]^i \ar[d]_f & Y \ar@{->>}[r]^d \ar[d]^g & Z \ar@{=}[d] \\
X' \ar@{>->}[r]_{i'} & Y' \ar@{->>}[r]_{d'} & Z 
}$$
Then the square $XYY'X'$ is both a pushout and a pullback. 
\item Consider the following diagram in $\mathcal{C}$, where the rows are conflations: 
$$\SelectTips{cm}{}
\xymatrix{
X \ar@{>->}[r]^i \ar@{=}[d] & Y \ar@{->>}[r]^d \ar[d]^g & Z \ar[d]^h \\
X \ar@{>->}[r]_{i'} & Y' \ar@{->>}[r]_{d'} & Z' 
}$$
Then the square $YZZ'Y'$ is both a pushout and a pullback. 
\end{enumerate}
\end{lem}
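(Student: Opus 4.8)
Since the statement is \cite[Proposition~2.12]{Buhler}, the plan is to record the argument. By Remark~\ref{r:max} the duals of $[E0]$ and $[E1]$ hold in any exact category, so together with the interchange of $[E2]$ and $[E2^{\rm op}]$ the opposite category $\mathcal{C}^{\rm op}$, endowed with the class of reversed conflations, is again exact; under this duality a diagram as in part~(2) becomes one as in part~(1), and ``pushout'' is exchanged with ``pullback''. Hence it suffices to prove part~(1), establishing there that the square $XYY'X'$ is both a pushout and a pullback; part~(2) then follows by passing to $\mathcal{C}^{\rm op}$.

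For the pullback assertion I would argue directly. Let $T$ be an object equipped with morphisms $a\colon T\to Y$ and $b\colon T\to X'$ with $ga=i'b$. Composing with $d'$ and using $d'i'=0$ together with $d'g=d$ (commutativity of the right square) gives $da=0$, so $a$ factors uniquely as $a=ic$ through the kernel $i$ of $d$. Then $i'(fc)=g(ic)=ga=i'b$, and since $i'$ is monic this forces $fc=b$; uniqueness of such a $c$ is clear because $i$ is monic. Thus every compatible pair $(a,b)$ factors uniquely through $c\colon T\to X$, which is the universal property of the pullback for the square $XYY'X'$.

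For the pushout assertion I would compare with a genuine pushout. By $[E2^{\rm op}]$ the pushout $P$ of $i\colon X\rightarrowtail Y$ along $f$ exists, with structure maps $j\colon Y\to P$ and $k\colon X'\to P$, and $k$ is an inflation. Realizing $P$ as the cokernel of $\left[\begin{smallmatrix} i\\ -f\end{smallmatrix}\right]\colon X\to Y\oplus X'$ in the additive category $\mathcal{C}$, one checks that $j$ induces an isomorphism from $Z=\operatorname{coker}i$ onto $\operatorname{coker}k$, so that $X'\rightarrowtail P\twoheadrightarrow Z$ is a conflation. Since $i'$ and $g$ agree on $X$ (the left square of~(1)), the universal property of $P$ yields a unique morphism $u\colon P\to Y'$ which, together with $1_{X'}$ and $1_Z$, is a morphism of conflations. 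By the short five lemma for exact categories, $u$ is an isomorphism, so the square $XYY'X'$ --- being the defining pushout square of $P$ followed by the isomorphism $u$ --- is itself a pushout.

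I expect the only delicate points to be the last two inputs: that a pushout along $[E2^{\rm op}]$ produces a conflation with the expected cokernel, and the short five lemma (a morphism of conflations that is an isomorphism on the outer terms is an isomorphism in the middle). Both belong to the foundational development of exact categories and are proved from the axioms together with the obscure axiom of Remark~\ref{r:max} in \cite{Buhler, Keller}; the remaining steps are purely formal manipulations of the universal properties of kernels, cokernels, pullbacks and pushouts.
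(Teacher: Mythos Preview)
Your argument is correct and is essentially the standard proof found in the cited reference \cite[Proposition~2.12]{Buhler}: the pullback property follows directly from the kernel property of $i$, while for the pushout you construct the genuine pushout $P$ via $[E2^{\rm op}]$, identify its cokernel with $Z$, and then use the short five lemma to conclude $P\cong Y'$. Note that the present paper does not supply its own proof of this lemma---it is stated with the citation and used as a black box---so there is no alternative approach to compare against; your write-up simply reconstructs the argument from \cite{Buhler}.
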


\begin{lem} \cite[Lemma~5.1]{Kelly} \label{l:rec} Consider the following diagram in an arbitrary category
$\mathcal{C}$: 
$$\SelectTips{cm}{}
\xymatrix{
X \ar[r]^i \ar[d]_f & Y \ar[r]^d \ar[d]^g & Z \ar[d]^h \\
X' \ar[r]_{i'} & Y' \ar[r]_{d'} & Z 
}$$
\begin{enumerate} 
\item 
\begin{enumerate}[(i)]
\item If the two squares are pushouts, then the rectangle is a pushout.
\item Assume that the left square is a pushout. Then the exterior rectangle is a pushout if and only if the right square
is a pushout.
\end{enumerate}
\item 
\begin{enumerate}[(i)]
\item If the two squares are pullbacks, then the rectangle is a pullback.
\item Assume that the right square is a pullback. Then the exterior rectangle is a pullback if and only if the left
square is a pullback.
\end{enumerate}
\end{enumerate}
\end{lem}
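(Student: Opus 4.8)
The plan is to prove part~(1) in full and to obtain part~(2) by formal duality: reversing every arrow turns the displayed diagram into the one of part~(1) read in $\mathcal{C}^{\op}$, and interchanges ``pushout'' with ``pullback,'' so no separate argument is needed. Within part~(1), the implication ``$\Leftarrow$'' of~(ii) is literally the assertion of~(i), so the genuine content is~(i) together with the ``$\Rightarrow$'' direction of~(ii). No exact-structure axiom enters at all — the whole thing is a diagram chase with universal properties in an arbitrary category, which is why the statement is phrased for an arbitrary $\mathcal{C}$. Throughout I would use the universal property of a pushout square $P\leftarrow A\rightarrow Q$, $P\to R\leftarrow Q$ in the standard form: it is a pushout exactly when, for every $W$ and every pair $P\to W$, $Q\to W$ agreeing on $A$, there is a unique $R\to W$ making both triangles commute. (I write $Z'$ for the lower right-hand corner of the diagram.)

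For~(i): assume both squares are pushouts, fix a test object $W$, and fix $p\colon Z\to W$, $q\colon X'\to W$ with $q\circ f=p\circ d\circ i$. I would produce the comparison map $Z'\to W$ in two stages. First, apply the universal property of the left-hand square to the pair $(q,\ p\circ d)$ — which agree on $X$ since $q\circ f=p\circ d\circ i$ — to get a unique $s\colon Y'\to W$ with $s\circ i'=q$ and $s\circ g=p\circ d$. Then apply the universal property of the right-hand square to the pair $(p,\ s)$ — which agree on $Y$ since $s\circ g=p\circ d$ — to get a unique $t\colon Z'\to W$ with $t\circ h=p$ and $t\circ d'=s$. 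This $t$ witnesses the outer rectangle: $t\circ h=p$ and $t\circ(d'\circ i')=s\circ i'=q$. For uniqueness, given another such $t'$, I would check that $t'\circ d'$ agrees with $s$ after precomposition with $i'$ and with $g$ (the latter using $d'\circ g=h\circ d$), hence $t'\circ d'=s$ by the left-hand pushout, whence $t'=t$ by the right-hand pushout.

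For the forward direction of~(ii): assume the left-hand square and the outer rectangle are pushouts, and show the right-hand square is a pushout. Fix $W$, $p\colon Z\to W$, $s\colon Y'\to W$ with $p\circ d=s\circ g$. Feed the pair $(p,\ s\circ i')$ into the universal property of the outer rectangle — they agree on $X$ since $p\circ d\circ i=s\circ g\circ i=s\circ i'\circ f$ — to obtain a unique $t\colon Z'\to W$ with $t\circ h=p$ and $t\circ(d'\circ i')=s\circ i'$. It remains to see $t\circ d'=s$: both are maps $Y'\to W$, they agree after precomposition with $i'$ by construction, and after precomposition with $g$ since $t\circ d'\circ g=t\circ h\circ d=p\circ d=s\circ g$, so they coincide because $Y'$ is the left-hand pushout; uniqueness of $t$ is inherited from the outer rectangle. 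The only point needing care anywhere is bookkeeping — tracking which pushout supplies existence and which supplies uniqueness at each step, and checking that each ``equal on both legs'' appeal is to the correct square — so I do not expect a real obstacle; part~(2) is then the identical argument carried out in $\mathcal{C}^{\op}$.
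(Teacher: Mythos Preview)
Your argument is correct: the two-stage use of the universal property in (1)(i), the reduction of uniqueness to the left-hand square followed by the right-hand one, and the forward direction of (1)(ii) via the outer rectangle together with the left pushout are all standard and sound; duality gives (2). The paper does not supply its own proof of this lemma at all --- it simply cites it as \cite[Lemma~5.1]{Kelly} --- so there is nothing to compare against beyond noting that what you have written is the expected elementary verification.
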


Let $\mathcal{C}$ be an additive category with an exact structure $\mathcal{D}$. One may construct the Yoneda bifunctor
${\rm Ext}^1_{\mathcal{D}}(-,-)$. For objects $X$ and $Z$ of $\mathcal{C}$, one has ${\rm Ext}^1_{\mathcal{D}}(Z,X)=0$
if and only if every conflation $X\rightarrowtail Y\twoheadrightarrow Z$ in $\mathcal{C}$ splits, that is, it is
isomorphic to the canonical split exact sequence $X\rightarrowtail X\oplus Z\twoheadrightarrow Z$. 

For a class $\mathcal{A}$ of objects of $\mathcal{C}$, we denote \[\mathcal{A}^{\perp_{\mathcal{D}}}=\{X\in
\mathcal{C}\mid {\rm Ext}^1_{\mathcal{D}}(A,X)=0 \textrm{ for every } A\in \mathcal{A}\},\]
\[{}^{\perp_{\mathcal{D}}}\mathcal{A}=\{X\in \mathcal{C}\mid {\rm Ext}^1_{\mathcal{D}}(X,A)=0 \textrm{ for every } A\in
\mathcal{A}\}.\]

The notion of cotorsion pair generalizes from abelian categories to exact categories in an obvious manner. 

\begin{defi} \cite{G,Hovey} \rm Let $\mathcal{C}$ be an additive category with an exact structure $\mathcal{D}$. 
A \emph{$\mathcal{D}$-cotorsion pair} in $\mathcal{C}$ is a pair of classes $(\mathcal{A},\mathcal{B})$ of objects in
$\mathcal{C}$ such that $\mathcal{A}^{\perp_{\mathcal{D}}}=\mathcal{B}$ and
${}^{\perp_{\mathcal{D}}}\mathcal{B}=\mathcal{A}$.

Let $\mathcal{A}$ be a class of objects of $\mathcal{C}$. The pair 
$({}^{\perp_{\mathcal{D}}}\mathcal{A},({}^{\perp_{\mathcal{D}}}\mathcal{A})^{\perp_{ \mathcal{D}}})$ is a
$\mathcal{D}$-cotorsion pair in $\mathcal{C}$, called the $\mathcal{D}$-cotorsion pair \emph{generated} by
$\mathcal{A}$. The pair
$({}^{\perp_{\mathcal{D}}}(\mathcal{A}^{\perp_{\mathcal{D}}}),\mathcal{A}^{\perp_{ \mathcal{D}}})$ is a
$\mathcal{D}$-cotorsion pair in $\mathcal{C}$, called the $\mathcal{D}$-cotorsion pair \emph{cogenerated} by
$\mathcal{A}$.
\end{defi}

\begin{defi} \label{d:cot} \rm Let $\mathcal{C}$ be an additive category with an exact structure $\mathcal{D}$. A
$\mathcal{D}$-cotorsion pair $(\mathcal{A},\mathcal{B})$ in $\mathcal{C}$ is said to have: 
\begin{enumerate}
\item \emph{enough $\mathcal{D}$-injectives} if for every object $X$ of $\mathcal{C}$ there exists a
$\mathcal{D}$-conflation $X\rightarrowtail B\twoheadrightarrow A$ for some objects $A\in \mathcal{A}$ and $B\in
\mathcal{B}$. 
\item \emph{enough $\mathcal{D}$-projectives} if for every object $Z$ of $\mathcal{C}$ there exists a
$\mathcal{D}$-conflation $B\rightarrowtail A\twoheadrightarrow Z$ for some objects $A\in \mathcal{A}$ and $B\in
\mathcal{B}$. 
\end{enumerate}

Also, $(\mathcal{A},\mathcal{B})$ is called \emph{$\mathcal{D}$-complete} if $(\mathcal{A},\mathcal{B})$ has enough
$\mathcal{D}$-injectives and enough $\mathcal{D}$-projectives.
\end{defi}

\begin{remark} \rm (1) An additive category $\mathcal{C}$ with an exact structure $\mathcal{D}$ has enough
$\mathcal{D}$-injectives if the $\mathcal{D}$-cotorsion pair $(\mathcal{C},Inj(\mathcal{D}))$ has enough
$\mathcal{D}$-injectives, where $Inj(\mathcal{D})$ denotes the class of $\mathcal{D}$-injective objects of
$\mathcal{C}$. Dually, $\mathcal{C}$ has enough $\mathcal{D}$-projectives if the $\mathcal{D}$-cotorsion pair
$(Proj(\mathcal{D}),\mathcal{C})$ has enough $\mathcal{D}$-projectives, where $Proj(\mathcal{D})$ denotes the class of
$\mathcal{D}$-projective objects of $\mathcal{C}$.

(2) Let $\mathcal{C}$ be an additive category with an exact structure $\mathcal{D}$, and let
$(\mathcal{A},\mathcal{B})$ be a $\mathcal{D}$-cotorsion pair in $\mathcal{C}$. Then both $\mathcal{A}$ and
$\mathcal{B}$ are closed under $\mathcal{D}$-extensions \cite[15.3]{Montano}.
\end{remark}

\begin{defi} \rm Let $\mathcal{A}$ be a class of objects (always considered to be closed under isomorphisms) in a
category $\mathcal{C}$. A morphism $f:A\to X$ in $\mathcal{C}$ with $A\in \mathcal{A}$ is called an
\emph{$\mathcal{A}$-precover} of $X$ if every morphism $f':A'\to X$ in $\mathcal{C}$ with $A'\in \mathcal{A}$ factors
through $f$. An $\mathcal{A}$-precover $f:A\to X$ of $X$ is called an \emph{$\mathcal{A}$-cover} if every endomorphism
$g:A\to A$ with $fg=f$ is an automorphism. The class $\mathcal{A}$ is called \emph{(pre)covering} if every object $X$ of
$\mathcal{C}$ has an $\mathcal{A}$-(pre)cover. The concepts of \emph{$\mathcal{A}$-(pre)envelope} and
\emph{(pre)enveloping} class are defined dually.
\end{defi}

\begin{defi}\rm Let $\mathcal{C}$ be an additive category with an exact structure $\mathcal{D}$ and $\mathcal{A}$ a
class of objects in $\mathcal{C}$. 

An $\mathcal{A}$-\emph{precover relative to} $\mathcal{D}$ of an object $C\in
\mathcal{C}$ is a $\mathcal{D}$-deflation $f:X\twoheadrightarrow C$ with $X\in
\mathcal{A}$ such that every morphism $f':X'\to C$ in $\mathcal{C}$ with $X'\in \mathcal{A}$ factors through $f$. An
$\mathcal{A}$-{precover relative to} $\mathcal{D}$ of an object $C\in \mathcal{C}$ is called an
$\mathcal{A}$-\emph{cover relative to} $\mathcal{D}$ of $C$ if every endomorphism $g:X\to X$ with $fg=f$ is an
automorphism. The class $\mathcal{A}$ is called a $\mathcal{D}$-\emph{precovering class} ($\mathcal{D}$-\emph{covering
class}) if every object $C\in \mathcal{C}$ has an $\mathcal{A}$-precover ($\mathcal{A}$-cover) relative to
$\mathcal{D}$. The concepts of \emph{$\mathcal{A}$-(pre)envelope} relative to $\mathcal{D}$ and
$\mathcal{D}$-\emph{(pre)enveloping} class are defined dually.

A $\mathcal{D}$-cotorsion pair $(\mathcal{A},\mathcal{B})$ in $\mathcal{C}$ is called \emph{$\mathcal{D}$-perfect} if
$\mathcal{A}$ is a $\mathcal{D}$-covering class and $\mathcal{B}$ is a $\mathcal{D}$-enveloping class.
\end{defi}

The following result is easily obtained from \cite[Section~16]{Montano}, and shows how $\mathcal{D}$-enveloping and
enveloping ($\mathcal{D}$-covering and covering) classes are related.

\begin{prop} \cite[Propositions~16.3, 16.4]{Montano} 
Let $\mathcal{C}$ be an additive category with an exact structure $\mathcal{D}$, and let $\mathcal{A}$ be a class of
objects of $\mathcal{C}$.
\begin{enumerate} 
\item Assume that $\mathcal{D}$ is injectively generated. Then $\mathcal{A}$ is a $\mathcal{D}$-enveloping class if and
only if $\mathcal{A}$ is an enveloping class and $Inj(\mathcal{D})\subseteq \mathcal{A}$.
\item Assume that $\mathcal{D}$ is projectively generated. Then $\mathcal{A}$ is a $\mathcal{D}$-covering class if and
only if $\mathcal{A}$ is a covering class and $Proj(\mathcal{D})\subseteq \mathcal{A}$.
\end{enumerate}
\end{prop}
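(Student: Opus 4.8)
The plan is to prove part (1) and obtain part (2) by passing to the opposite category: under $\mathcal{C}\mapsto\mathcal{C}^{\op}$, inflations, $\mathcal{D}$-injective objects, (pre)envelopes and injectively generated exact structures turn into deflations, $\mathcal{D}$-projective objects, (pre)covers and projectively generated exact structures, so that statement (2) is literally statement (1) read in $\mathcal{C}^{\op}$. The only ingredient that is not manifestly self-dual, Quillen's obscure axiom, is available in both the inflation and the deflation version by Remark~\ref{r:max}. In the argument I would also use that $\mathcal{C}$ has enough $\mathcal{D}$-injectives (so that each object admits a $\mathcal{D}$-inflation into a $\mathcal{D}$-injective object).

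For the forward implication of (1), assume $\mathcal{A}$ is a $\mathcal{D}$-enveloping class. An $\mathcal{A}$-(pre)envelope relative to $\mathcal{D}$ satisfies, apart from being a $\mathcal{D}$-inflation, exactly the defining conditions of an ordinary $\mathcal{A}$-(pre)envelope, so $\mathcal{A}$ is an enveloping class. To get $Inj(\mathcal{D})\subseteq\mathcal{A}$, take $E\in Inj(\mathcal{D})$ and an $\mathcal{A}$-envelope $f\colon E\rightarrowtail X$ of $E$ relative to $\mathcal{D}$, complete it to a $\mathcal{D}$-conflation $E\rightarrowtail X\twoheadrightarrow Z$, and use ${\rm Ext}^1_{\mathcal{D}}(Z,E)=0$ to split it, obtaining $r\colon X\to E$ with $rf=1_E$. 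Then $fr$ is an idempotent endomorphism of $X$ with $(fr)f=f$, hence an automorphism by the envelope condition, hence $fr=1_X$; thus $f$ is an isomorphism and $E\cong X\in\mathcal{A}$.

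For the converse, assume $\mathcal{A}$ is an enveloping class with $Inj(\mathcal{D})\subseteq\mathcal{A}$, fix an object $C$, and let $f\colon C\to X$ be an ordinary $\mathcal{A}$-envelope. It suffices to show that $f$ is a $\mathcal{D}$-inflation, since then the factorization and minimality conditions carry over and $f$ is an $\mathcal{A}$-envelope of $C$ relative to $\mathcal{D}$. To prove this, choose a $\mathcal{D}$-inflation $\eta\colon C\rightarrowtail E$ with $E$ a $\mathcal{D}$-injective object; then $E\in Inj(\mathcal{D})\subseteq\mathcal{A}$, so $\eta$ factors through the $\mathcal{A}$-preenvelope as $\eta=hf$. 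Since $\eta$ is an inflation, $f$ is monic, and $f$ has a cokernel, so the inflation form of the obscure axiom (Remark~\ref{r:max}) applied to $\eta=hf$ shows that $f$ is a $\mathcal{D}$-inflation. Conceptually, writing $d=\operatorname{coker}f$: the kernel--cokernel pair $(f,d)$ lies in $\mathcal{D}$ because $\mathcal{D}$ is injectively generated by $Inj(\mathcal{D})$ and, for each $M\in Inj(\mathcal{D})$, surjectivity of ${\rm Hom}_{\mathcal{C}}(X,M)\to{\rm Hom}_{\mathcal{C}}(C,M)$ is precisely the statement that every morphism $C\to M\in\mathcal{A}$ factors through the preenvelope $f$, the remaining exactness being automatic for a kernel--cokernel pair.

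The forward implication and the $\mathcal{C}^{\op}$ bookkeeping should be routine; the substance, and the step I expect to be the main obstacle, is the converse assertion that an ordinary $\mathcal{A}$-envelope is automatically a $\mathcal{D}$-inflation. This is exactly where the hypothesis that $\mathcal{D}$ is injectively generated (so that $Inj(\mathcal{D})$ recovers $\mathcal{D}$), the inclusion $Inj(\mathcal{D})\subseteq\mathcal{A}$, and the obscure axiom (or, equivalently, the $\operatorname{Hom}$-exactness computation above) must be combined to convert the preenvelope property into membership of the relevant conflation in $\mathcal{D}$; pinning down the precise hypothesis on the existence of $\mathcal{D}$-injectives that makes this go through is the one point requiring care, and for it I would follow \cite[Section~16]{Montano}.
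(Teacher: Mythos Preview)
The paper does not supply its own proof of this proposition: it is stated with the citation \cite[Propositions~16.3, 16.4]{Montano} and prefaced only by the remark that it ``is easily obtained from \cite[Section~16]{Montano}''. So there is no in-paper argument to compare against, and your sketch is precisely the standard one that underlies the cited result.

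Two technical points in your converse direction deserve comment. First, you assert that the $\mathcal{A}$-envelope $f$ ``has a cokernel'' in order to invoke the obscure axiom; this is not automatic in a bare additive category. However, the hypothesis that $\mathcal{D}$ is \emph{injectively generated} in the paper's unqualified sense (i.e.\ relative to the exact structure of all kernel--cokernel pairs) already presupposes that all kernel--cokernel pairs form an exact structure, and Montano in fact works in abelian categories, so this is harmless in the intended setting. Second, as you yourself flag, the argument uses a $\mathcal{D}$-inflation $\eta\colon C\rightarrowtail E$ with $E\in Inj(\mathcal{D})$, i.e.\ enough $\mathcal{D}$-injectives, which is not listed among the hypotheses; without it one cannot even conclude that $f$ is monic. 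Your deferral to \cite[Section~16]{Montano} for the precise standing assumptions is therefore the right move. With those ambient hypotheses in place, both directions and the duality reduction for (2) are correct.
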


For an exhaustive account on exact categories, covers and envelopes we refer to \cite{Buhler,Xu}.

\section{Two Galois connections}

The main concepts studied in the paper are those of relatively divisible and relatively flat objects. They are inspired
by Sklyarenko's modules of flat type relative to a proper class \cite{Skly}. Let us recall their definition \cite[15.4,
15.11]{Montano}, adapted in an obvious way to exact categories. 

\begin{defi} \rm Let $\mathcal{C}$ be an additive category, and let $\mathcal{D}$ and $\mathcal{E}$ be exact structures
on $\mathcal{C}$. An object $X$ of $\mathcal{C}$ is called:
\begin{enumerate}
\item \emph{$\mathcal{D}$-$\mathcal{E}$-divisible} if every $\mathcal{D}$-inflation $X\rightarrowtail Y$ is an
$\mathcal{E}$-inflation.
\item \emph{$\mathcal{D}$-$\mathcal{E}$-flat} if every $\mathcal{D}$-deflation $Y\twoheadrightarrow X$
is an $\mathcal{E}$-deflation.
\end{enumerate}
We denote by $Div(\mathcal{D}\textrm{-}\mathcal{E})$ and $Flat(\mathcal{D}\textrm{-}\mathcal{E})$ the classes of
$\mathcal{D}$-$\mathcal{E}$-divisible and $\mathcal{D}$-$\mathcal{E}$-flat objects of $\mathcal{C}$ respectively.
\end{defi}

\begin{remark} \rm (1) Let $\mathcal{C}$ be an additive category with an exact structure $\mathcal{D}$. For the exact
structure $\mathcal{E}$ consisting of all split exact sequences, $\mathcal{D}$-$\mathcal{E}$-divisible and
$\mathcal{D}$-$\mathcal{E}$-flat objects coincide with $\mathcal{D}$-injective and $\mathcal{D}$-projective objects
respectively.  

(2) Let $\mathcal{C}$ be a quasi-abelian category. Let $\mathcal{D}$ be the exact structure on $\mathcal{C}$ given by
all kernel-cokernel pairs and $\mathcal{E}$ an exact structure on $\mathcal{C}$. Then
$\mathcal{D}$-$\mathcal{E}$-divisible objects and $\mathcal{D}$-$\mathcal{E}$-flat objects of $\mathcal{C}$ are simply
called \emph{$\mathcal{E}$-divisible} and \emph{$\mathcal{E}$-flat} respectively. We denote by
$Div(\mathcal{E})$ and $Flat(\mathcal{E})$ the classes of $\mathcal{E}$-divisible and $\mathcal{E}$-flat
objects of $\mathcal{C}$ respectively.
\end{remark}

In what follows we shall show that relatively divisible and relatively flat objects may be used for establishing Galois
connections between the posets of relative cotorsion pairs, relative projectively generated exact structures and
relative injectively generated exact structures. We extend and reformulate results from \cite[Section~14]{Montano} in
terms of Galois connections.

We begin with the following proposition, which extends \cite[15.8, 15.15]{Montano} from abelian categories to exact categories.

\begin{prop} \label{p:Ext} Let $\mathcal{C}$ be an additive category, and let $\mathcal{D}$ and $\mathcal{E}$ be exact
structures on $\mathcal{C}$.
\begin{enumerate}
\item Assume that $\mathcal{E}$ is $\mathcal{D}$-projectively generated by a class $\mathcal{M}$ of objects of
$\mathcal{C}$. Then an object $X$ of $\mathcal{C}$ is $\mathcal{D}$-$\mathcal{E}$-divisible if and only if 
${\rm Ext}^1_{\mathcal{D}}(\mathcal{M},X)=0$.  
\item Assume that $\mathcal{E}$ is $\mathcal{D}$-injectively generated by a class $\mathcal{M}$ of objects of
$\mathcal{C}$. Then an object $Z$ of $\mathcal{C}$ is $\mathcal{D}$-$\mathcal{E}$-flat if and only if ${\rm
Ext}^1_{\mathcal{D}}(Z,\mathcal{M})=0$.  
\end{enumerate}
\end{prop}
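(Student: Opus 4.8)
The plan is to prove statement (1), statement (2) following by the evident dualization (inflations become deflations, and the covariant functor $H={\rm Hom}_{\mathcal{C}}(M,-)$ used to define the $\mathcal{D}$-projectively generated structure is replaced by the contravariant $G={\rm Hom}_{\mathcal{C}}(-,M)$ defining the $\mathcal{D}$-injectively generated one). First I would unwind the hypothesis on $\mathcal{E}$ by means of Proposition \ref{p:projinjgen}: a $\mathcal{D}$-conflation $X'\rightarrowtail Y'\twoheadrightarrow Z'$ lies in $\mathcal{E}=\mathcal{E}_H^{\mathcal{M}}$ precisely when, for every $M\in\mathcal{M}$, the sequence $0\to{\rm Hom}_{\mathcal{C}}(M,X')\to{\rm Hom}_{\mathcal{C}}(M,Y')\to{\rm Hom}_{\mathcal{C}}(M,Z')\to 0$ is exact in $\Ab$. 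Since ${\rm Hom}_{\mathcal{C}}(M,-)$ sends every kernel-cokernel pair to a left exact sequence of abelian groups (exactness at the middle term is forced by the universal property of the kernel), the sole content of this condition is the surjectivity of ${\rm Hom}_{\mathcal{C}}(M,Y')\to{\rm Hom}_{\mathcal{C}}(M,Z')$, i.e. that every morphism $M\to Z'$ lifts along the deflation $Y'\twoheadrightarrow Z'$. Hence $X$ being $\mathcal{D}$-$\mathcal{E}$-divisible translates into the statement: for every $\mathcal{D}$-conflation $X\rightarrowtail Y\twoheadrightarrow Z$ and every $M\in\mathcal{M}$, each morphism $M\to Z$ lifts through $Y\twoheadrightarrow Z$.

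Next I would record the standard reformulation of the lifting property via pullbacks: a morphism $g\colon M\to Z$ lifts along the deflation of a $\mathcal{D}$-conflation $X\rightarrowtail Y\twoheadrightarrow Z$ if and only if the $\mathcal{D}$-conflation $X\rightarrowtail Y'\twoheadrightarrow M$ obtained by pulling $Y\twoheadrightarrow Z$ back along $g$ (a $\mathcal{D}$-conflation by $[E2]$) splits. Since every $\mathcal{D}$-conflation $X\rightarrowtail W\twoheadrightarrow M$ is the pullback of itself along $1_M$, letting the conflations $X\rightarrowtail Y\twoheadrightarrow Z$ and the morphisms $g$ vary shows that the lifting condition for a fixed $M$ holds for all such conflations if and only if every $\mathcal{D}$-conflation $X\rightarrowtail W\twoheadrightarrow M$ splits, that is, ${\rm Ext}^1_{\mathcal{D}}(M,X)=0$ (by the characterization of the vanishing of ${\rm Ext}^1_{\mathcal{D}}$ recalled in Section 2).

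Combining the two equivalences above and letting $M$ range over $\mathcal{M}$ yields exactly the assertion that $X$ is $\mathcal{D}$-$\mathcal{E}$-divisible if and only if ${\rm Ext}^1_{\mathcal{D}}(M,X)=0$ for every $M\in\mathcal{M}$, i.e. ${\rm Ext}^1_{\mathcal{D}}(\mathcal{M},X)=0$. If one prefers the two implications separately: for $(\Leftarrow)$ one pulls back an arbitrary $\mathcal{D}$-conflation $X\rightarrowtail Y\twoheadrightarrow Z$ along a given $g\colon M\to Z$, splits the pulled-back conflation using ${\rm Ext}^1_{\mathcal{D}}(M,X)=0$, and composes a section with the pullback projection $Y'\to Y$ to obtain a lift of $g$; for $(\Rightarrow)$ one takes any $\mathcal{D}$-conflation $X\rightarrowtail Y\twoheadrightarrow M$, observes it is an $\mathcal{E}$-conflation by divisibility of $X$, whence ${\rm Hom}_{\mathcal{C}}(M,Y)\to{\rm Hom}_{\mathcal{C}}(M,M)$ is onto and any preimage of $1_M$ splits it.

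The only genuinely delicate point is the first step: being completely precise about what it means for $\mathcal{E}$ to be $\mathcal{D}$-projectively generated by $\mathcal{M}$ at the level of a single conflation, and in particular noticing that left exactness of ${\rm Hom}_{\mathcal{C}}(M,-)$ on conflations is automatic, so that membership in $\mathcal{E}$ is governed solely by a surjectivity condition on ${\rm Hom}$-groups. After that, both implications are short applications of the pullback axiom $[E2]$ and the already-recorded equivalence between the vanishing of ${\rm Ext}^1_{\mathcal{D}}(M,X)$ and the splitting of all $\mathcal{D}$-conflations $X\rightarrowtail W\twoheadrightarrow M$; no property of $\mathcal{D}$ or $\mathcal{E}$ beyond the exact-category axioms enters, and (2) is verbatim dual.
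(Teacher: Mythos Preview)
Your proof is correct and follows essentially the same approach as the paper: both directions hinge on identifying membership in $\mathcal{E}=\mathcal{E}_H^{\mathcal{M}}$ with the lifting property for morphisms out of objects of $\mathcal{M}$, and then using the splitting characterization of the vanishing of ${\rm Ext}^1_{\mathcal{D}}$. Your pullback formulation of the $(\Leftarrow)$ direction is in fact more explicit than the paper's rather compressed sentence at that step.
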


\begin{proof} (1) (i)$\Rightarrow$(ii) Assume that $X$ is $\mathcal{D}$-$\mathcal{E}$-divisible. Let $X\rightarrowtail 
Y\twoheadrightarrow M$ be a $\mathcal{D}$-conflation with $M\in \mathcal{M}$. By hypothesis, it must be an
$\mathcal{E}$-conflation. Since $\mathcal{E}=\mathcal{E}^{\mathcal{M}}_H$, $M$ is projective with respect to this
$\mathcal{E}$-conflation, which implies its splitness. Hence ${\rm Ext}^1_{\mathcal{D}}(\mathcal{M},X)=0$.

(ii)$\Rightarrow$(i) Assume that ${\rm Ext}^1_{\mathcal{D}}(\mathcal{M},X)=0$. Let $X\rightarrowtail Y\twoheadrightarrow
Z$ be a $\mathcal{D}$-conflation. Since this $\mathcal{D}$-conflation splits, every $M\in \mathcal{M}$ is projective
with respect to the above $\mathcal{D}$-conflation. Then the $\mathcal{D}$-conflation must be an
$\mathcal{E}$-conflation, because $\mathcal{E}=\mathcal{E}^{\mathcal{M}}_H$. This shows that $X$ is
$\mathcal{D}$-$\mathcal{E}$-divisible.
\end{proof}

The next lemma can be immediately deduced from Proposition \ref{p:Ext} (also, see \cite [Lemmas~14.8, 14.9]{Montano}).

\begin{lem} \label{l:simplif} Let $\mathcal{C}$ be an additive category, and let $\mathcal{D}$ and $\mathcal{E}$ be
exact structures on $\mathcal{C}$. 
\begin{enumerate}
\item Assume that $\mathcal{E}$ is $\mathcal{D}$-projectively generated. Then
$({}^{\perp_{\mathcal{D}}}Div(\mathcal{D}\textrm{-}\mathcal{E}))^{\perp_{\mathcal{D}}}=Div(\mathcal{D}\textrm{-}\mathcal
{E})$. 
\item Assume that $\mathcal{E}$ is $\mathcal{D}$-injectively generated. Then
${}^{\perp_{\mathcal{D}}}(Flat(\mathcal{D}\textrm{-}\mathcal{E})^{\perp_{\mathcal{D}}})=Flat(\mathcal{D}\textrm{-}
\mathcal{E})$.
\end{enumerate}
\end{lem}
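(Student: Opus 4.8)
The plan is to derive the statement directly from Proposition \ref{p:Ext}(1), which already identifies $Div(\mathcal{D}\textrm{-}\mathcal{E})$ with a class of the form $\mathcal{M}^{\perp_{\mathcal{D}}}$. Concretely, since $\mathcal{E}$ is $\mathcal{D}$-projectively generated, choose a class $\mathcal{M}$ of objects of $\mathcal{C}$ such that $\mathcal{E}=\mathcal{E}_H^{\mathcal{M}}$. By Proposition \ref{p:Ext}(1), an object $X$ is $\mathcal{D}$-$\mathcal{E}$-divisible if and only if ${\rm Ext}^1_{\mathcal{D}}(M,X)=0$ for every $M\in\mathcal{M}$, that is, $Div(\mathcal{D}\textrm{-}\mathcal{E})=\mathcal{M}^{\perp_{\mathcal{D}}}$. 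Thus $Div(\mathcal{D}\textrm{-}\mathcal{E})$ is the right-hand class of the $\mathcal{D}$-cotorsion pair cogenerated by $\mathcal{M}$.

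First I would record the elementary fact that for any class $\mathcal{N}$ of objects of $\mathcal{C}$ one has $({}^{\perp_{\mathcal{D}}}(\mathcal{N}^{\perp_{\mathcal{D}}}))^{\perp_{\mathcal{D}}}=\mathcal{N}^{\perp_{\mathcal{D}}}$: this is the standard double-orthogonal identity for the Galois connection $({}^{\perp_{\mathcal{D}}}(-),(-)^{\perp_{\mathcal{D}}})$ induced by the bifunctor ${\rm Ext}^1_{\mathcal{D}}(-,-)$, and it is exactly the assertion that $({}^{\perp_{\mathcal{D}}}(\mathcal{N}^{\perp_{\mathcal{D}}}),\mathcal{N}^{\perp_{\mathcal{D}}})$ is a $\mathcal{D}$-cotorsion pair, as already stated in the definition of the cotorsion pair cogenerated by $\mathcal{N}$. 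Applying this with $\mathcal{N}=\mathcal{M}$ and substituting $\mathcal{M}^{\perp_{\mathcal{D}}}=Div(\mathcal{D}\textrm{-}\mathcal{E})$ on both occurrences gives
\[
({}^{\perp_{\mathcal{D}}}Div(\mathcal{D}\textrm{-}\mathcal{E}))^{\perp_{\mathcal{D}}}=({}^{\perp_{\mathcal{D}}}(\mathcal{M}^{\perp_{\mathcal{D}}}))^{\perp_{\mathcal{D}}}=\mathcal{M}^{\perp_{\mathcal{D}}}=Div(\mathcal{D}\textrm{-}\mathcal{E}),
\]
which is precisely claim (1). Part (2) follows by the dual argument, using Proposition \ref{p:Ext}(2) to write $Flat(\mathcal{D}\textrm{-}\mathcal{E})={}^{\perp_{\mathcal{D}}}\mathcal{M}$ for a suitable $\mathcal{M}$ and then invoking the identity ${}^{\perp_{\mathcal{D}}}(({}^{\perp_{\mathcal{D}}}\mathcal{N})^{\perp_{\mathcal{D}}})={}^{\perp_{\mathcal{D}}}\mathcal{N}$, which is the statement that $({}^{\perp_{\mathcal{D}}}\mathcal{N},({}^{\perp_{\mathcal{D}}}\mathcal{N})^{\perp_{\mathcal{D}}})$ is the $\mathcal{D}$-cotorsion pair generated by $\mathcal{N}$.

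There is essentially no obstacle here: the content is entirely in Proposition \ref{p:Ext}, and what remains is the formal closure property of orthogonality operators. The only point that needs a word of care is that Proposition \ref{p:Ext}(1) is phrased for a fixed generating class $\mathcal{M}$, so one must note at the outset that such an $\mathcal{M}$ exists by the hypothesis that $\mathcal{E}$ is $\mathcal{D}$-projectively generated, and that the displayed chain of equalities does not depend on which generating class is chosen (both the first and last terms are computed from $Div(\mathcal{D}\textrm{-}\mathcal{E})$ alone). I would therefore keep the write-up to a few lines, stating the identification $Div(\mathcal{D}\textrm{-}\mathcal{E})=\mathcal{M}^{\perp_{\mathcal{D}}}$, citing the double-orthogonal identity, and then leaving part (2) to the reader as ``dual''.
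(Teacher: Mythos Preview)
Your proof is correct and is exactly the approach the paper takes: the paper simply states that the lemma ``can be immediately deduced from Proposition~\ref{p:Ext}'', and your write-up spells out precisely this deduction via the identification $Div(\mathcal{D}\textrm{-}\mathcal{E})=\mathcal{M}^{\perp_{\mathcal{D}}}$ together with the standard double-orthogonal closure.
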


Let us recall the concept of Galois connection between posets (e.g., see \cite{Erne}). 

\begin{defi} \rm Let $(A,\leq)$ and $(B,\leq)$ be posets. 
\begin{enumerate}
\item A {\it monotone Galois connection} between $(A,\leq)$ and $(B,\leq)$ consists of a pair $(\alpha, \beta)$ of two
order-preserving maps $\alpha:A\to B$ and $\beta:B\to A$ such that for all $a\in A$ and $b\in B$, we have
$\alpha(a)\leq b$ if and only if $a\leq \beta(b)$. Equivalently, $(\alpha,\beta)$ is a monotone Galois connection if and
only if for all $a\in A$, $a\leq \beta\alpha(a)$ and for all $b\in B$, $\alpha\beta(b)\leq b$. An element $a\in A$
(respectively $b\in B$) is called \emph{Galois} with respect to $(\alpha, \beta)$ if $\beta\alpha(a)=a$ (respectively
$\alpha\beta(b)=b$). 
\item An {\it antitone Galois connection} between $(A,\leq)$ and $(B,\leq)$ is a monotone Galois connection between
$(A,\leq)$ and $(B,\geq)$. 
\end{enumerate}
\end{defi}

Let $\mathcal{C}$ be an additive category with an exact structure $\mathcal{D}$. The class $Ex(\mathcal{C})$ of exact
structures on $\mathcal{C}$ is partially ordered as follows: for every $\mathcal{E}_1,\mathcal{E}_2\in Ex(\mathcal{C})$,
$\mathcal{E}_1\supseteq \mathcal{E}_2$ if and only if every $\mathcal{E}_2$-conflation
is an $\mathcal{E}_1$-conflation. This relation restricts to ones on the classes $DPEx(\mathcal{C})$ and
$DIEx(\mathcal{C})$ of $\mathcal{D}$-projectively and $\mathcal{D}$-injectively generated exact structures on
$\mathcal{C}$. The class $DCot(\mathcal{C})$ of $\mathcal{D}$-cotorsion pairs in $\mathcal{C}$ is partially ordered as
follows: for every $(\mathcal{A}_1,\mathcal{B}_1), (\mathcal{A}_2,\mathcal{B}_2)\in DCot(\mathcal{C})$,
$(\mathcal{A}_1,\mathcal{B}_1)\geq (\mathcal{A}_2,\mathcal{B}_2)$ if and only if $\mathcal{A}_2\supseteq \mathcal{A}_1$
if and only if $\mathcal{B}_1\supseteq \mathcal{B}_2$ \cite{GSW}.

Following \cite[Section~14]{Montano} and the original work by Salce \cite{Salce}, we consider the following maps. 
\begin{enumerate}
\item Let $\Psi:DPEx(\mathcal{C})\to DCot(\mathcal{C})$ be defined by
$$\Psi(\mathcal{E})=({}^{\perp_{\mathcal{D}}}Div(\mathcal{D}\textrm{-}\mathcal{E}),Div(\mathcal{D}\textrm{-}\mathcal{E}
))=({}^{\perp_{\mathcal{D}}}Div(\mathcal{D}\textrm{-}\mathcal{E}),({}^{\perp_{\mathcal{D}}}Div(\mathcal{D}\textrm{-}
\mathcal{E}))^{\perp_{ \mathcal{D}}})$$ (see Lemma \ref{l:simplif}) for every $\mathcal{E}\in DPEx(\mathcal{C})$,
that is, the $\mathcal{D}$-cotorsion pair generated by $Div(\mathcal{D}\textrm{-}\mathcal{E})$. Let
$\overset{\sim}\Psi:DCot(\mathcal{C})\to DPEx(\mathcal{C})$ be defined by
$\overset{\sim}\Psi(\mathcal{A},\mathcal{B})=\pi_{\mathcal{D}}^{-1}(\mathcal{A})$ for every
$(\mathcal{A},\mathcal{B})\in DCot(\mathcal{C})$, that is, the exact structure $\mathcal{D}$-projectively generated by
$\mathcal{A}$.
\item Let $\Phi:DIEx(\mathcal{C})\to DCot(\mathcal{C})$ be defined by
$$\Phi(\mathcal{E})=(Flat(\mathcal{D}\textrm{-}\mathcal{E}),Flat(\mathcal{D}\textrm{-}\mathcal{E})^{\perp_{\mathcal{D}}}
)=({}^{\perp_{\mathcal{D}}}(Flat(\mathcal{D}\textrm{-}\mathcal{E})^{\perp_{\mathcal{D}}}),Flat(\mathcal{D}\textrm{-}
\mathcal{E})^{\perp_{ \mathcal{D}}})$$ (see Lemma \ref{l:simplif}) for every $\mathcal{E}\in DIEx(\mathcal{C})$, that
is, the $\mathcal{D}$-cotorsion pair cogenerated by $Flat(\mathcal{E})$. Let
$\overset{\sim}\Phi:DCot(\mathcal{C})\to DIEx(\mathcal{C})$ be defined by
$\overset{\sim}\Phi(\mathcal{A},\mathcal{B})=\iota_{\mathcal{D}}^{-1}(\mathcal{B})$ for every
$(\mathcal{A},\mathcal{B})\in DCot(\mathcal{C})$, that is, the exact structure $\mathcal{D}$-injectively generated by
$\mathcal{B}$.
\end{enumerate}

The following proposition offers one of the main motivations for studying relative divisibility and relative flatness. It is 
an immediate generalization of \cite[15.10, 15.17]{Montano} from abelian categories to
exact categories, and it will be frequently used in what follows, sometimes without explicit reference.

\begin{prop} \label{p:identif} With the above notation:
\begin{enumerate}
\item The map $\Psi$ is surjective. More precisely, for every $\mathcal{D}$-cotorsion pair $(\mathcal{A},\mathcal{B})$
in $\mathcal{C}$, $\mathcal{B}=Div(\mathcal{D}\textrm{-}\mathcal{E})$ for the $\mathcal{D}$-projectively generated
exact structure $\mathcal{E}=\pi^{-1}_{\mathcal{D}}(\mathcal{A})$ on $\mathcal{C}$.
\item The map $\Phi$ is surjective. More precisely, for every $\mathcal{D}$-cotorsion pair $(\mathcal{A},\mathcal{B})$
in $\mathcal{C}$, $\mathcal{A}=Flat(\mathcal{D}\textrm{-}\mathcal{E})$ for the
$\mathcal{D}$-injectively generated exact structure $\mathcal{E}=\iota^{-1}_{\mathcal{D}}(\mathcal{B})$ on
$\mathcal{C}$.
\end{enumerate}
\end{prop}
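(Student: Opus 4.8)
The plan is to derive everything from Proposition \ref{p:Ext} together with the defining identities of a $\mathcal{D}$-cotorsion pair; there is essentially no independent combinatorial content, only a matter of unwinding definitions in the right order. I will prove part (1) and indicate that (2) is dual.

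First I would fix an arbitrary $\mathcal{D}$-cotorsion pair $(\mathcal{A},\mathcal{B})$ in $\mathcal{C}$ and set $\mathcal{E}=\pi^{-1}_{\mathcal{D}}(\mathcal{A})=\mathcal{E}^{\mathcal{A}}_H$. By Proposition \ref{p:projinjgen} this is a genuine exact structure on $\mathcal{C}$, and it is $\mathcal{D}$-projectively generated by the class $\mathcal{A}$, so $\mathcal{E}\in DPEx(\mathcal{C})$ and $\overset{\sim}\Psi(\mathcal{A},\mathcal{B})=\mathcal{E}$. Now I apply Proposition \ref{p:Ext}(1) with $\mathcal{M}=\mathcal{A}$: an object $X$ of $\mathcal{C}$ lies in $Div(\mathcal{D}\textrm{-}\mathcal{E})$ if and only if ${\rm Ext}^1_{\mathcal{D}}(\mathcal{A},X)=0$, i.e. if and only if $X\in\mathcal{A}^{\perp_{\mathcal{D}}}$. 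Since $(\mathcal{A},\mathcal{B})$ is a $\mathcal{D}$-cotorsion pair, $\mathcal{A}^{\perp_{\mathcal{D}}}=\mathcal{B}$, whence $Div(\mathcal{D}\textrm{-}\mathcal{E})=\mathcal{B}$. This establishes the ``more precisely'' assertion of (1).

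For the surjectivity of $\Psi$ itself, I would plug this into the definition of $\Psi$:
$$\Psi(\mathcal{E})=\bigl({}^{\perp_{\mathcal{D}}}Div(\mathcal{D}\textrm{-}\mathcal{E}),\,Div(\mathcal{D}\textrm{-}\mathcal{E})\bigr)=\bigl({}^{\perp_{\mathcal{D}}}\mathcal{B},\,\mathcal{B}\bigr)=(\mathcal{A},\mathcal{B}),$$
where the last equality again uses that $(\mathcal{A},\mathcal{B})$ is a $\mathcal{D}$-cotorsion pair, so ${}^{\perp_{\mathcal{D}}}\mathcal{B}=\mathcal{A}$. Since $(\mathcal{A},\mathcal{B})\in DCot(\mathcal{C})$ was arbitrary, $\Psi$ is surjective, and $\overset{\sim}\Psi$ provides an explicit section. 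Part (2) follows by the dual argument: put $\mathcal{E}=\iota^{-1}_{\mathcal{D}}(\mathcal{B})=\mathcal{E}^{\mathcal{B}}_G$, which is $\mathcal{D}$-injectively generated by $\mathcal{B}$, apply Proposition \ref{p:Ext}(2) with $\mathcal{M}=\mathcal{B}$ to get $Flat(\mathcal{D}\textrm{-}\mathcal{E})=\{Z\mid {\rm Ext}^1_{\mathcal{D}}(Z,\mathcal{B})=0\}={}^{\perp_{\mathcal{D}}}\mathcal{B}=\mathcal{A}$, and then unwind the definition of $\Phi$ using $\mathcal{A}^{\perp_{\mathcal{D}}}=\mathcal{B}$.

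The only point that requires any care — and it is where I would be most explicit — is the bookkeeping between the ``divisible/flat'' classes and the orthogonality operators: one must make sure that the simplified forms of $\Psi$ and $\Phi$ recorded after Lemma \ref{l:simplif} are being used, and that the cotorsion-pair identities $\mathcal{A}^{\perp_{\mathcal{D}}}=\mathcal{B}$ and ${}^{\perp_{\mathcal{D}}}\mathcal{B}=\mathcal{A}$ are invoked on the correct side. Beyond that, there is no real obstacle; the substance of the proposition is entirely carried by Proposition \ref{p:Ext}, which is why it can be quoted as ``immediate''.
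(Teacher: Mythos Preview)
Your proof is correct and follows exactly the approach implicit in the paper's treatment: the paper does not write out a proof of Proposition~\ref{p:identif} but simply flags it as an immediate generalization of \cite[15.10, 15.17]{Montano}, and the content of that generalization is precisely the application of Proposition~\ref{p:Ext} together with the cotorsion-pair identities, which is what you do. There is nothing to add.
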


\begin{theor} \label{t:Galois} With the above notation:
\begin{enumerate}
\item The pair $(\Psi,\overset{\sim}\Psi)$ is a monotone Galois connection between the posets
$(DPEx(\mathcal{C}),\supseteq)$ and $(DCot(\mathcal{C}),\geq)$. 
\item The pair $(\Phi,\overset{\sim}\Phi)$ is an antitone Galois connection between the posets
$(DIEx(\mathcal{C}),\supseteq)$ and $(DCot(\mathcal{C}),\geq)$. 
\end{enumerate}
\end{theor}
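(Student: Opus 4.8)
The plan is to treat each part by verifying three things in turn: that the two maps are well defined (i.e.\ land in the stated posets), that each is order-preserving, and that the unit and counit inequalities hold, which by the equivalent formulation of a monotone Galois connection recalled before the theorem suffices. The homological content is carried entirely by the earlier results: Proposition~\ref{p:identif} will supply the counit inequalities---in fact as equalities, which is exactly why $\Psi$ and $\Phi$ are surjective---while Proposition~\ref{p:Ext} together with Lemma~\ref{l:simplif} will supply the unit inequalities. The only other ingredients are the elementary facts that $\pi^{-1}_{\mathcal{D}}$ and $\iota^{-1}_{\mathcal{D}}$ are inclusion-reversing (an intersection over a smaller class of objects is a larger class of conflations) and that $\mathcal{E}_1\supseteq\mathcal{E}_2$ implies $Div(\mathcal{D}\textrm{-}\mathcal{E}_1)\supseteq Div(\mathcal{D}\textrm{-}\mathcal{E}_2)$ and $Flat(\mathcal{D}\textrm{-}\mathcal{E}_1)\supseteq Flat(\mathcal{D}\textrm{-}\mathcal{E}_2)$, since an $\mathcal{E}_2$-(in/de)flation is then also an $\mathcal{E}_1$-(in/de)flation. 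I would prove (1) in full; part~(2) is dual, with $Flat$, $\iota^{-1}_{\mathcal{D}}$, Lemma~\ref{l:simplif}(2), Proposition~\ref{p:identif}(2) and Proposition~\ref{p:Ext}(2) replacing their counterparts, the one subtlety being that an antitone Galois connection between $(DIEx(\mathcal{C}),\supseteq)$ and $(DCot(\mathcal{C}),\geq)$ is by definition a monotone one between $(DIEx(\mathcal{C}),\supseteq)$ and $(DCot(\mathcal{C}),\leq)$, so the orientation of the codomain order must be carried along.

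Turning to (1), one first notes that $\Psi(\mathcal{E})$ is by construction the $\mathcal{D}$-cotorsion pair generated by $Div(\mathcal{D}\textrm{-}\mathcal{E})$, so $\Psi(\mathcal{E})\in DCot(\mathcal{C})$, and its two displayed presentations agree by Lemma~\ref{l:simplif}(1); and $\overset{\sim}\Psi(\mathcal{A},\mathcal{B})=\pi^{-1}_{\mathcal{D}}(\mathcal{A})\in DPEx(\mathcal{C})$ by Proposition~\ref{p:projinjgen}. For monotonicity, $\mathcal{E}_1\supseteq\mathcal{E}_2$ gives $Div(\mathcal{D}\textrm{-}\mathcal{E}_1)\supseteq Div(\mathcal{D}\textrm{-}\mathcal{E}_2)$, which is precisely the inequality defining $\Psi(\mathcal{E}_1)\geq\Psi(\mathcal{E}_2)$ in $DCot(\mathcal{C})$; and $(\mathcal{A}_1,\mathcal{B}_1)\geq(\mathcal{A}_2,\mathcal{B}_2)$ forces $\mathcal{A}_1\subseteq\mathcal{A}_2$, hence $\pi^{-1}_{\mathcal{D}}(\mathcal{A}_1)\supseteq\pi^{-1}_{\mathcal{D}}(\mathcal{A}_2)$, i.e.\ $\overset{\sim}\Psi$ is order-preserving from $(DCot(\mathcal{C}),\geq)$ to $(DPEx(\mathcal{C}),\supseteq)$.

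It then remains to check the unit and counit inequalities. For the counit, fix $(\mathcal{A},\mathcal{B})\in DCot(\mathcal{C})$; by Proposition~\ref{p:identif}(1) we have $Div(\mathcal{D}\textrm{-}\pi^{-1}_{\mathcal{D}}(\mathcal{A}))=\mathcal{B}$, so $\Psi\overset{\sim}\Psi(\mathcal{A},\mathcal{B})=({}^{\perp_{\mathcal{D}}}\mathcal{B},\mathcal{B})=(\mathcal{A},\mathcal{B})$ because ${}^{\perp_{\mathcal{D}}}\mathcal{B}=\mathcal{A}$; thus $\Psi\overset{\sim}\Psi$ is the identity and the counit inequality holds trivially. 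For the unit, fix $\mathcal{E}\in DPEx(\mathcal{C})$ and write $\mathcal{E}=\pi^{-1}_{\mathcal{D}}(\mathcal{M})$ for a class $\mathcal{M}$; by Proposition~\ref{p:Ext}(1), $Div(\mathcal{D}\textrm{-}\mathcal{E})=\mathcal{M}^{\perp_{\mathcal{D}}}$, hence $\mathcal{M}\subseteq{}^{\perp_{\mathcal{D}}}(\mathcal{M}^{\perp_{\mathcal{D}}})={}^{\perp_{\mathcal{D}}}Div(\mathcal{D}\textrm{-}\mathcal{E})$, and applying the inclusion-reversing operator $\pi^{-1}_{\mathcal{D}}$ gives $\mathcal{E}=\pi^{-1}_{\mathcal{D}}(\mathcal{M})\supseteq\pi^{-1}_{\mathcal{D}}\big({}^{\perp_{\mathcal{D}}}Div(\mathcal{D}\textrm{-}\mathcal{E})\big)=\overset{\sim}\Psi\Psi(\mathcal{E})$, which is the unit inequality in $(DPEx(\mathcal{C}),\supseteq)$. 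Together with order-preservation, this makes $(\Psi,\overset{\sim}\Psi)$ a monotone Galois connection, proving (1); running the dual argument proves (2).

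I expect no genuine obstacle: Propositions~\ref{p:identif} and \ref{p:Ext} do the substantive work, and the rest is bookkeeping. The one point that really needs care, and that I would recheck line by line, is the direction of each inequality---$DPEx(\mathcal{C})$ and $DIEx(\mathcal{C})$ are ordered by containment of classes of conflations, $DCot(\mathcal{C})$ by $\geq$, and $\pi^{-1}_{\mathcal{D}}$, $\iota^{-1}_{\mathcal{D}}$ reverse inclusions---so an orientation is easy to slip; spelling out, exactly as above, what ``$\Psi(\mathcal{E})\geq(\mathcal{A},\mathcal{B})$'' and ``$\mathcal{E}\supseteq\overset{\sim}\Psi(\mathcal{A},\mathcal{B})$'' say about the underlying classes is what keeps the verification honest.
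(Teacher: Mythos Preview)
Your proposal is correct and follows essentially the same approach as the paper's proof: verify that $\Psi,\overset{\sim}\Psi$ are order-preserving, then establish the unit inequality $\mathcal{E}\supseteq\overset{\sim}\Psi\Psi(\mathcal{E})$ and the counit equality $\Psi\overset{\sim}\Psi(\mathcal{A},\mathcal{B})=(\mathcal{A},\mathcal{B})$, using Propositions~\ref{p:Ext} and \ref{p:identif} respectively. The paper's proof merely cites \cite[14.10, 14.14, 14.15]{Montano} for each of these steps, whereas you spell out the arguments explicitly; the logical structure is identical.
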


\begin{proof} (1) One shows that $\Psi$ and $\overset{\sim}\Psi$ are order-preserving maps between the posets 
$(DPEx(\mathcal{C}),\supseteq)$ and $(DCot(\mathcal{C}),\geq)$ as in \cite[14.10, 14.14]{Montano}. Also,
$\mathcal{E}\supseteq \overset{\sim}\Psi(\Psi(\mathcal{E}))$ for every $\mathcal{E}\in DPEx(\mathcal{C})$, and
$\Psi(\overset{\sim}\Psi(\mathcal{A},\mathcal{B}))\geq (\mathcal{A},\mathcal{B})$ (even equality) for every
$(\mathcal{A},\mathcal{B})\in DCot(\mathcal{C})$ as in \cite[14.15]{Montano}. 

(2) One shows that $\Phi$ and $\overset{\sim}\Phi$ are order-reversing maps between the
posets $(DIEx(\mathcal{C}),\supseteq)$ and $(DCot(\mathcal{C}),\geq)$ as in \cite[14.10, 14.14]{Montano}. Also,
$\mathcal{E}\supseteq \overset{\sim}\Phi(\Phi(\mathcal{E}))$ for every $\mathcal{E}\in DIEx(\mathcal{C})$, and
$(\mathcal{A},\mathcal{B})\geq \Phi(\overset{\sim}\Phi(\mathcal{A},\mathcal{B}))$ (even equality) for every
$(\mathcal{A},\mathcal{B})\in DCot(\mathcal{C})$ as in \cite[14.15]{Montano}.
\end{proof}

The following definition extends the notion of injectively generated Xu exact structure \cite[14.16]{Montano} from
abelian categories to exact categories, and also gives its dual.

\begin{defi} \rm Let $\mathcal{C}$ be an additive category with an exact structure $\mathcal{D}$ .
\begin{enumerate}
\item A $\mathcal{D}$-projectively generated exact structure $\mathcal{E}$ on $\mathcal{C}$ is called a \emph{Xu exact
structure} if $Proj(\mathcal{E})={}^{\perp_{\mathcal{D}}}Div(\mathcal{D}\textrm{-}\mathcal{E})$.
\item A $\mathcal{D}$-injectively generated exact structure $\mathcal{E}$ on $\mathcal{C}$ is called a \emph{Xu exact
structure} if $Inj(\mathcal{E})=Flat(\mathcal{D}\textrm{-}\mathcal{E})^{\perp_{\mathcal{D}}}$.
\end{enumerate}
\end{defi}

\begin{cor} Let $\mathcal{C}$ be an additive category with an exact structure $\mathcal{D}$. Then there are bijective
correspondences between:
\begin{enumerate}
\item $\mathcal{D}$-cotorsion pairs in $\mathcal{C}$. 
\item $\mathcal{D}$-projectively generated Xu exact structures on $\mathcal{C}$.
\item $\mathcal{D}$-injectively generated Xu exact structures on $\mathcal{C}$.
\end{enumerate}
\end{cor}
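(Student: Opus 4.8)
The plan is to assemble the bijections from the two Galois connections of Theorem \ref{t:Galois} by identifying the Galois elements on each side. First I would observe that by Proposition \ref{p:identif}, both maps $\Psi:DPEx(\mathcal{C})\to DCot(\mathcal{C})$ and $\Phi:DIEx(\mathcal{C})\to DCot(\mathcal{C})$ are surjective, and that in Theorem \ref{t:Galois} equality $\Psi(\overset{\sim}\Psi(\mathcal{A},\mathcal{B}))=(\mathcal{A},\mathcal{B})$ and $\Phi(\overset{\sim}\Phi(\mathcal{A},\mathcal{B}))=(\mathcal{A},\mathcal{B})$ holds for every $\mathcal{D}$-cotorsion pair; this already says that \emph{every} $\mathcal{D}$-cotorsion pair is a Galois element of both connections. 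The standard theory of Galois connections (e.g.\ \cite{Erne}) then tells us that $\overset{\sim}\Psi$ restricts to a bijection from $DCot(\mathcal{C})$ onto the set of Galois elements of $DPEx(\mathcal{C})$, with inverse $\Psi$, and similarly $\overset{\sim}\Phi$ restricts to a bijection from $DCot(\mathcal{C})$ onto the Galois elements of $DIEx(\mathcal{C})$, with inverse $\Phi$. So the whole content reduces to identifying the Galois elements of $DPEx(\mathcal{C})$ (resp.\ $DIEx(\mathcal{C})$) with the $\mathcal{D}$-projectively (resp.\ $\mathcal{D}$-injectively) generated Xu exact structures.

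The key step is therefore: a $\mathcal{D}$-projectively generated exact structure $\mathcal{E}$ is a Galois element, i.e.\ $\overset{\sim}\Psi(\Psi(\mathcal{E}))=\mathcal{E}$, if and only if it is a Xu exact structure, i.e.\ $Proj(\mathcal{E})={}^{\perp_{\mathcal{D}}}Div(\mathcal{D}\textrm{-}\mathcal{E})$. For the forward direction I would unwind the definitions: $\Psi(\mathcal{E})=({}^{\perp_{\mathcal{D}}}Div(\mathcal{D}\textrm{-}\mathcal{E}),Div(\mathcal{D}\textrm{-}\mathcal{E}))$ and then $\overset{\sim}\Psi$ of this is the exact structure $\mathcal{D}$-projectively generated by ${}^{\perp_{\mathcal{D}}}Div(\mathcal{D}\textrm{-}\mathcal{E})$; on the other hand one checks that a $\mathcal{D}$-projectively generated $\mathcal{E}$ is also generated by its own class $Proj(\mathcal{E})$ of $\mathcal{E}$-projectives (this is the key structural fact about projectively generated exact structures — the canonical generating class can be taken to be $Proj(\mathcal{E})$, up to $\mathcal{D}$-projectivizing, and the induced structure $\pi_{\mathcal{D}}^{-1}(Proj(\mathcal{E}))$ equals $\mathcal{E}$). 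Hence $\overset{\sim}\Psi(\Psi(\mathcal{E}))=\pi_{\mathcal{D}}^{-1}({}^{\perp_{\mathcal{D}}}Div(\mathcal{D}\textrm{-}\mathcal{E}))$ while $\mathcal{E}=\pi_{\mathcal{D}}^{-1}(Proj(\mathcal{E}))$, and since $\pi_{\mathcal{D}}^{-1}$ is injective on suitable generating classes these two agree precisely when $Proj(\mathcal{E})={}^{\perp_{\mathcal{D}}}Div(\mathcal{D}\textrm{-}\mathcal{E})$. The converse is the same identity read backwards. The dual argument, interchanging inflations with deflations, projectives with injectives, $Div$ with $Flat$, and $\pi$ with $\iota$, handles the $DIEx$ case.

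Having established that the Galois elements are exactly the Xu exact structures on each side, I would finish by composing: the bijection $DCot(\mathcal{C})\leftrightarrow\{\mathcal{D}\text{-projectively generated Xu exact structures}\}$ is given by $(\mathcal{A},\mathcal{B})\mapsto\pi_{\mathcal{D}}^{-1}(\mathcal{A})$ with inverse $\mathcal{E}\mapsto({}^{\perp_{\mathcal{D}}}Div(\mathcal{D}\textrm{-}\mathcal{E}),Div(\mathcal{D}\textrm{-}\mathcal{E}))$, the bijection $DCot(\mathcal{C})\leftrightarrow\{\mathcal{D}\text{-injectively generated Xu exact structures}\}$ is given by $(\mathcal{A},\mathcal{B})\mapsto\iota_{\mathcal{D}}^{-1}(\mathcal{B})$ with inverse $\mathcal{E}\mapsto(Flat(\mathcal{D}\textrm{-}\mathcal{E}),Flat(\mathcal{D}\textrm{-}\mathcal{E})^{\perp_{\mathcal{D}}})$, and composing the two yields the bijection between the two kinds of Xu exact structures directly. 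The main obstacle I anticipate is the careful verification that a $\mathcal{D}$-projectively generated exact structure is regenerated by its class of relative projectives — i.e.\ that $\pi_{\mathcal{D}}^{-1}$ and the assignment $\mathcal{E}\mapsto Proj(\mathcal{E})$ are mutually inverse on the relevant subposets — since the definition of Xu exact structure is phrased exactly so that this closes up, but it requires one to show that $Proj(\pi_{\mathcal{D}}^{-1}(\mathcal{M}))$ is the $\mathcal{D}$-projective-closure of $\mathcal{M}$ and that $\pi_{\mathcal{D}}^{-1}$ depends only on that closure; everything else is formal nonsense about Galois connections.
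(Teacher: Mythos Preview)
Your overall strategy matches the paper's exactly: invoke the Galois connections of Theorem~\ref{t:Galois}, note (from its proof and Proposition~\ref{p:identif}) that every $\mathcal{D}$-cotorsion pair is a Galois element, and then identify the Galois elements of $DPEx(\mathcal{C})$ and $DIEx(\mathcal{C})$ with the Xu exact structures. The converse direction ``Xu $\Rightarrow$ Galois'' is also handled the same way in both, via $\mathcal{E}=\pi_{\mathcal{D}}^{-1}(Proj(\mathcal{E}))=\pi_{\mathcal{D}}^{-1}({}^{\perp_{\mathcal{D}}}Div(\mathcal{D}\textrm{-}\mathcal{E}))=\overset{\sim}\Psi(\Psi(\mathcal{E}))$.

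The one place you diverge --- and the one you yourself flag as the obstacle --- is the forward implication ``Galois $\Rightarrow$ Xu''. You try to deduce $Proj(\mathcal{E})={}^{\perp_{\mathcal{D}}}Div(\mathcal{D}\textrm{-}\mathcal{E})$ from the equality $\pi_{\mathcal{D}}^{-1}(Proj(\mathcal{E}))=\pi_{\mathcal{D}}^{-1}({}^{\perp_{\mathcal{D}}}Div(\mathcal{D}\textrm{-}\mathcal{E}))$ by appealing to injectivity of $\pi_{\mathcal{D}}^{-1}$ on ``suitable closed classes'', and then worry about characterising $Proj(\pi_{\mathcal{D}}^{-1}(\mathcal{M}))$ as a closure of $\mathcal{M}$. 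This detour is unnecessary, and the injectivity statement as phrased is delicate (different classes can and do generate the same exact structure). The paper bypasses it with two immediate inclusions: one always has $Proj(\mathcal{E})\subseteq {}^{\perp_{\mathcal{D}}}Div(\mathcal{D}\textrm{-}\mathcal{E})$ (if $P\in Proj(\mathcal{E})$ and $X\in Div(\mathcal{D}\textrm{-}\mathcal{E})$, any $\mathcal{D}$-conflation $X\rightarrowtail Y\twoheadrightarrow P$ is an $\mathcal{E}$-conflation and hence splits), and if $\mathcal{E}$ is Galois then $\mathcal{E}=\pi_{\mathcal{D}}^{-1}(\mathcal{A})$ with $\mathcal{A}={}^{\perp_{\mathcal{D}}}Div(\mathcal{D}\textrm{-}\mathcal{E})$, so the trivial containment $\mathcal{A}\subseteq Proj(\pi_{\mathcal{D}}^{-1}(\mathcal{A}))=Proj(\mathcal{E})$ gives the reverse inclusion. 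With this simplification your argument is the paper's.
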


\begin{proof} We shall use the following well-known result on Galois connections: if a pair $(\alpha, \beta)$ of maps
$\alpha:A\to B$ and $\beta:B\to A$ is a monotone Galois connection between posets $(A,\leq)$ and $(B,\leq)$, then
$\alpha$ and $\beta$ induce a bijective correspondence between the Galois elements of $A$ and the Galois elements of $B$
with respect to $(\alpha, \beta)$.

By the proof of Theorem \ref{t:Galois}, every $\mathcal{D}$-cotorsion pair in $\mathcal{C}$ is a Galois element with
respect to both Galois connections $(\Psi,\overset{\sim}\Psi)$ and $(\Phi,\overset{\sim}\Phi)$. 

Let us show that the Galois elements of $DPEx(\mathcal{C})$ with respect to the Galois connection
$(\Psi,\overset{\sim}\Psi)$ are the $\mathcal{D}$-projectively generated Xu exact structures on $\mathcal{C}$. First,
let $\mathcal{E}$ be a Galois element of $DPEx(\mathcal{C})$ with respect to $(\Psi,\overset{\sim}\Psi)$. Then
$\Psi(\mathcal{E})=(\mathcal{A},\mathcal{B})$ for some $\mathcal{D}$-cotorsion pair $(\mathcal{A},\mathcal{B})$ in
$\mathcal{C}$, and we have
$$\mathcal{E}=\overset{\sim}\Psi(\Psi(\mathcal{E}))=\overset{\sim}\Psi(\mathcal{A},\mathcal{B})=\pi^{-1}_{\mathcal{D}}
(\mathcal{A}).$$ By Proposition \ref{p:identif}, it follows that
${}^{\perp_{\mathcal{D}}}Div(\mathcal{D}\textrm{-}\mathcal{E})={}^{\perp_{\mathcal{D}}}\mathcal{B}=\mathcal{A}\subseteq
Proj(\pi^{-1}_{\mathcal{D}}(\mathcal{A}))=Proj(\mathcal{E})$.
On the other hand, we have $Proj(\mathcal{E})\subseteq {}^{\perp_{\mathcal{D}}}Div(\mathcal{D}\textrm{-}\mathcal{E})$,
since $\mathcal{E}$ is $\mathcal{D}$-projectively generated. Hence $\mathcal{E}$ is a $\mathcal{D}$-projectively
generated Xu exact structure on $\mathcal{C}$. Conversely, let $\mathcal{E}$ be a $\mathcal{D}$-projectively generated
Xu exact structure on $\mathcal{C}$. Then
$$\mathcal{E}=\pi^{-1}_{\mathcal{D}}(Proj(\mathcal{E}))=\pi^{-1}_{\mathcal{D}}({}^{\perp_{\mathcal{D}}}Div(\mathcal{D}
\textrm{-}\mathcal{E}))=\overset{\sim}\Psi(\Psi(\mathcal{E})),$$ hence $\mathcal{E}$ is a Galois element of
$DPEx(\mathcal{C})$ with respect to $(\Psi,\overset{\sim}\Psi)$.

Similarly, one shows that the Galois elements of $DIEx(\mathcal{C})$ with respect to the Galois connection
$(\Phi,\overset{\sim}\Phi)$ are the $\mathcal{D}$-injectively generated Xu exact structures on $\mathcal{C}$.
\end{proof}

We end this section with a characterization of injectively and projectively generated Xu exact structures, which
generalizes \cite[Theorem~3.5.1]{Xu} from module categories to exact categories, and also gives its dual. The arguments 
of the implications (ii)$\Rightarrow$(iii) extend those of the Wakamatsu Lemmas \cite[Lemmas 2.1.1 and 2.1.2]{Xu}. 

\begin{theor} \label{t:extensions} Let $\mathcal{C}$ be an additive category, and let $\mathcal{D}$ and $\mathcal{E}$ be
exact structures on $\mathcal{C}$. 
\begin{enumerate}
\item Assume that $\mathcal{E}$ is $\mathcal{D}$-projectively generated and every object $Z$ of $\mathcal{C}$ has a
$Proj(\mathcal{E})$-cover $\mathcal{P}(Z)\twoheadrightarrow Z$ relative to $\mathcal{D}$. Then the following are
equivalent:
\begin{enumerate}[(i)]
\item $\mathcal{E}$ is a $\mathcal{D}$-projectively generated Xu exact structure on $\mathcal{C}$.
\item $Proj(\mathcal{E})$ is closed under $\mathcal{D}$-extensions.
\item For every $\mathcal{D}$-conflation $X\rightarrowtail \mathcal{P}(Z)\twoheadrightarrow Z$ in $\mathcal{C}$, $X$ is
$\mathcal{D}$-$\mathcal{E}$-divisible.
\end{enumerate}
\item Assume that $\mathcal{E}$ is $\mathcal{D}$-injectively generated and every object $X$ of $\mathcal{C}$ has an
$Inj(\mathcal{E})$-envelope $X\rightarrowtail\mathcal{I}(X)$ relative to $\mathcal{D}$. Then the following are
equivalent:
\begin{enumerate}[(i)]
\item $\mathcal{E}$ is a $\mathcal{D}$-injectively generated Xu exact structure on $\mathcal{C}$.
\item $Inj(\mathcal{E})$ is closed under $\mathcal{D}$-extensions.
\item For every $\mathcal{D}$-conflation $X\rightarrowtail \mathcal{I}(X)\twoheadrightarrow Z$ in $\mathcal{C}$, $Z$ is
$\mathcal{D}$-$\mathcal{E}$-flat.
\end{enumerate}
\end{enumerate}
\end{theor}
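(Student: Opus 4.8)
The plan is to prove part (1), since part (2) follows by the dual argument (reversing arrows, swapping inflations/deflations, projectives/injectives, covers/envelopes). I would establish the cycle (i)$\Rightarrow$(ii)$\Rightarrow$(iii)$\Rightarrow$(i).

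For (i)$\Rightarrow$(ii): if $\mathcal{E}$ is a $\mathcal{D}$-projectively generated Xu exact structure, then by definition $Proj(\mathcal{E})={}^{\perp_{\mathcal{D}}}Div(\mathcal{D}\textrm{-}\mathcal{E})$. Since for any $\mathcal{D}$-projectively generated $\mathcal{E}$ the pair $({}^{\perp_{\mathcal{D}}}Div(\mathcal{D}\textrm{-}\mathcal{E}),Div(\mathcal{D}\textrm{-}\mathcal{E}))$ is a $\mathcal{D}$-cotorsion pair (Lemma \ref{l:simplif}(1)), the left-hand class of any $\mathcal{D}$-cotorsion pair is closed under $\mathcal{D}$-extensions (Remark after Definition \ref{d:cot}). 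Hence $Proj(\mathcal{E})$ is closed under $\mathcal{D}$-extensions.

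For (ii)$\Rightarrow$(iii): let $X\rightarrowtail \mathcal{P}(Z)\twoheadrightarrow Z$ be a $\mathcal{D}$-conflation with $\mathcal{P}(Z)\twoheadrightarrow Z$ the $Proj(\mathcal{E})$-cover relative to $\mathcal{D}$. By Proposition \ref{p:Ext}(1), $X$ is $\mathcal{D}$-$\mathcal{E}$-divisible precisely when $\mathrm{Ext}^1_{\mathcal{D}}(P,X)=0$ for every $P\in\mathcal{M}$, where $\mathcal{M}$ generates $\mathcal{E}$; since $Proj(\mathcal{E})$ contains $\mathcal{M}$ (indeed one may take $\mathcal{M}=Proj(\mathcal{E})$ up to the generated structure), it suffices to show $\mathrm{Ext}^1_{\mathcal{D}}(P,X)=0$ for every $P\in Proj(\mathcal{E})$. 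Here I would run the Wakamatsu-type argument: given a $\mathcal{D}$-conflation $X\rightarrowtail W\twoheadrightarrow P$ with $P\in Proj(\mathcal{E})$, form the pullback of $\mathcal{P}(Z)\twoheadrightarrow Z$ (no — rather, use that $P\in Proj(\mathcal{E})$ and $\mathcal{P}(Z)\in Proj(\mathcal{E})$, and by (ii) the middle term $W'$ of the pushout/pullback assembling these two conflations lies in $Proj(\mathcal{E})$); the cover property of $\mathcal{P}(Z)\twoheadrightarrow Z$ then forces the relevant endomorphism to be an automorphism, so the conflation $X\rightarrowtail W\twoheadrightarrow P$ splits. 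Concretely: build the commutative diagram with rows $X\rightarrowtail W\twoheadrightarrow P$ and $X\rightarrowtail \mathcal{P}(Z)\twoheadrightarrow Z$, take the pushout of $X\rightarrowtail W$ along $X\rightarrowtail\mathcal{P}(Z)$ to get an object $U\in Proj(\mathcal{E})$ (using $[E2^{\rm op}]$ and closure under $\mathcal{D}$-extensions), obtain a $\mathcal{D}$-deflation $U\twoheadrightarrow Z$, which factors through the $Proj(\mathcal{E})$-cover $\mathcal{P}(Z)\twoheadrightarrow Z$; composing with $\mathcal{P}(Z)\to U$ yields an endomorphism of $\mathcal{P}(Z)$ over $Z$, hence an automorphism, which splits $X\rightarrowtail\mathcal{P}(Z)\to U$ compatibly, and a diagram chase splits $X\rightarrowtail W\twoheadrightarrow P$.

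For (iii)$\Rightarrow$(i): I must show $Proj(\mathcal{E})={}^{\perp_{\mathcal{D}}}Div(\mathcal{D}\textrm{-}\mathcal{E})$. The inclusion $Proj(\mathcal{E})\subseteq {}^{\perp_{\mathcal{D}}}Div(\mathcal{D}\textrm{-}\mathcal{E})$ holds automatically because $\mathcal{E}$ is $\mathcal{D}$-projectively generated (this is noted in the proof of the Corollary above). For the reverse inclusion, take $A\in {}^{\perp_{\mathcal{D}}}Div(\mathcal{D}\textrm{-}\mathcal{E})$ and its $Proj(\mathcal{E})$-cover $\mathcal{P}(A)\twoheadrightarrow A$ relative to $\mathcal{D}$, with $\mathcal{D}$-conflation $X\rightarrowtail\mathcal{P}(A)\twoheadrightarrow A$. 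By (iii), $X$ is $\mathcal{D}$-$\mathcal{E}$-divisible, so $\mathrm{Ext}^1_{\mathcal{D}}(A,X)=0$, hence this conflation splits: $A$ is a direct summand of $\mathcal{P}(A)\in Proj(\mathcal{E})$, and $Proj(\mathcal{E})$ is closed under direct summands, so $A\in Proj(\mathcal{E})$.

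The main obstacle is the diagram chase in (ii)$\Rightarrow$(iii): carefully assembling the pushout square and the factorization through the relative cover, checking at each step that the relevant morphisms are genuine $\mathcal{D}$-inflations/deflations (using $[E2]$, $[E2^{\rm op}]$, Lemma \ref{l:POPB}, Lemma \ref{l:rec}, and the obscure axiom from Remark \ref{r:max}), and verifying that the induced endomorphism of $\mathcal{P}(Z)$ really commutes with the deflation to $Z$ so that the cover property applies. Getting the splitting of $X\rightarrowtail W\twoheadrightarrow P$ out of the splitting obtained upstairs requires one more application of Lemma \ref{l:POPB} to recognize the original conflation as a pullback.
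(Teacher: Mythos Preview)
Your proposal is correct and follows essentially the same route as the paper: the cycle (i)$\Rightarrow$(ii)$\Rightarrow$(iii)$\Rightarrow$(i), with (ii)$\Rightarrow$(iii) via the Wakamatsu-type pushout argument and the cover property, and (iii)$\Rightarrow$(i) by splitting the cover conflation. The only minor divergence is in how you finish (ii)$\Rightarrow$(iii): after obtaining the factorization $h:U\to\mathcal{P}(Z)$ and the automorphism $hf'$, the paper defines $u=(hf')^{-1}hi':W\to\mathcal{P}(Z)$ with $uf=i$, then uses the \emph{precover} property once more (lifting the induced map $M\to Z$ through $\mathcal{P}(Z)\twoheadrightarrow Z$) together with the Homotopy Lemma \cite[7.16]{Wisb} to split $X\rightarrowtail W\twoheadrightarrow P$ directly; your suggested route via pullback recognition and Lemma~\ref{l:POPB} would also work but is slightly more circuitous.
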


\begin{proof} (1) (i)$\Rightarrow$(ii) Assume that (i) holds. Then
$Proj(\mathcal{E})={}^{\perp_{\mathcal{D}}}Div(\mathcal{D}\textrm{-}\mathcal{E})$ is closed under
$\mathcal{D}$-extensions, as a class of the above $\mathcal{D}$-cotorsion pair $\Psi(\mathcal{E})$.
 
(ii)$\Rightarrow$(iii) Assume that (ii) holds. Consider a $\mathcal{D}$-conflation $X\stackrel{i}\rightarrowtail
\mathcal{P}(Z)\stackrel{d}\twoheadrightarrow Z$. Since $\mathcal{E}$ is $\mathcal{D}$-projectively generated, we have
$\mathcal{E}=\pi^{-1}_{\mathcal{D}}(Proj(\mathcal{E}))$. Then by Proposition \ref{p:Ext}, in order to show that $X$ is
$\mathcal{D}$-$\mathcal{E}$-divisible, it is enough to prove that ${\rm Ext}^1_{\mathcal{D}}(M,X)=0$ for every $M\in
Proj(\mathcal{E})$. To this end, let $M\in Proj(\mathcal{E})$, and consider a $\mathcal{D}$-conflation
$X\stackrel{f}\rightarrowtail Y\stackrel{g}\twoheadrightarrow M$. Taking the pushout of the morphisms $i$ and $f$ we may
construct (see Lemma \ref{l:POPB}) the following commutative diagram:
$$\SelectTips{cm}{}
\xymatrix{
X \ar@{>->}[r]^i \ar@{>->}[d]_f & \mathcal{P}(Z) \ar@{->>}[r]^d \ar@{>->}[d]^{f'} & Z \ar@{=}[d] \\
Y \ar@{>->}[r]_{i'} \ar@{->>}[d]_g & V \ar@{->>}[r]_{d'} \ar@{->>}[d]^{g'} & Z \\
M \ar@{=}[r] & M &  
}$$
in which the rows and the columns are $\mathcal{D}$-conflations. Since $Proj(\mathcal{E})$ is closed under 
$\mathcal{D}$-extensions, we have $V\in Proj(\mathcal{E})$. Now the $Proj(\mathcal{E})$-precover property of
$\mathcal{P}(Z)$ yields a morphism $h:V\to \mathcal{P}(Z)$ such that $d'=dh$. Then $d=dhf'$, and the
$Proj(\mathcal{E})$-cover property of $\mathcal{P}(Z)$ implies that $hf'$ is an automorphism of $\mathcal{P}(Z)$.
Denote $u=(hf')^{-1}hi':Y\to \mathcal{P}(Z)$. Then $uf=i$ and we may construct the following commutative diagram:
$$\SelectTips{cm}{}
\xymatrix{
X \ar@{>->}[r]^f \ar@{=}[d] & Y \ar@{->>}[r]^g \ar[d]^u & M \ar[d]^v \\
X \ar@{>->}[r]_i & \mathcal{P}(Z) \ar@{->>}[r]_d & Z 
}$$
The $Proj(\mathcal{E})$-precover property of $\mathcal{P}(Z)$ yields a morphism $w:M\to \mathcal{P}(Z)$ such that
$dw=v$. Then the $\mathcal{D}$-conflation $X\stackrel{f}\rightarrowtail Y\stackrel{g}\twoheadrightarrow M$ splits by the
Homotopy Lemma \cite[7.16]{Wisb}. Therefore, ${\rm Ext}^1_{\mathcal{D}}(M,X)=0$ for every $M\in Proj(\mathcal{E})$,
which shows that $X$ is $\mathcal{D}$-$\mathcal{E}$-divisible.

(iii)$\Rightarrow$(i) Assume that (iii) holds. Let $Z\in
{}^{\perp_{\mathcal{D}}}Div(\mathcal{D}\textrm{-}\mathcal{E})$. Consider the $\mathcal{D}$-conflation $X\rightarrowtail
\mathcal{P}(Z)\twoheadrightarrow Z$. Then $X\in Div(\mathcal{D}\textrm{-}\mathcal{E})$, and so the
$\mathcal{D}$-conflation splits. Hence $Z\in Proj(\mathcal{E})$, which shows that $\mathcal{E}$ is a
$\mathcal{D}$-projectively generated Xu exact structure.
\end{proof}

\section{Properties of relatively divisible and relatively flat objects}

We collect in the next proposition some first results on the classes of relatively divisible and relatively
flat objects in exact categories. They are immediate generalizations of \cite[15.7 and 15.14]{Montano}) from abelian
categories to exact categories. 

\begin{prop} \label{p:closureDE} Let $\mathcal{C}$ be an additive category, and let $\mathcal{D}$ and $\mathcal{E}$ be
exact structures on $\mathcal{C}$.  
\begin{enumerate} \item 
\begin{enumerate}[(i)] 
\item The class of $\mathcal{D}$-$\mathcal{E}$-divisible objects of $\mathcal{C}$ is closed under
$\mathcal{D}$-extensions and $\mathcal{E}$-inflations. 
\item Assume that $\mathcal{C}$ has products and $\mathcal{E}$ is closed under products. Then the class of
$\mathcal{D}$-$\mathcal{E}$-divisible objects of $\mathcal{C}$ is closed under products.
\item Assume that $\mathcal{C}$ has direct limits and $\mathcal{E}$ is closed under direct limits. Let $(X_i,f_{ij})_I$
be a direct system of $\mathcal{D}$-$\mathcal{E}$-divisible objects of $\mathcal{C}$ with direct limit
$(\underset{\longrightarrow}\lim \,X_i,f_i)$ such that each $f_{ij}$ is a $\mathcal{D}$-inflation. Then
$\underset{\longrightarrow}\lim \,X_i$ is $\mathcal{D}$-$\mathcal{E}$-divisible. In particular, the class of 
$\mathcal{D}$-$\mathcal{E}$-divisible objects of $\mathcal{C}$ is closed under coproducts.
\item Every object of $\mathcal{C}$ is $\mathcal{D}$-$\mathcal{E}$-divisible if and only if $\mathcal{D}\subseteq
\mathcal{E}$.
\item Assume that $\mathcal{E}$ is $\mathcal{D}$-projectively generated by a class $\mathcal{M}$ of objects of
$\mathcal{C}$. Then every object of $\mathcal{C}$ is $\mathcal{D}$-$\mathcal{E}$-divisible if and only if every object
of $\mathcal{M}$ is $\mathcal{D}$-projective. 
\end{enumerate}
\item 
\begin{enumerate}[(i)]
\item The class of $\mathcal{D}$-$\mathcal{E}$-flat objects of $\mathcal{C}$ is closed under $\mathcal{D}$-extensions
and $\mathcal{E}$-deflations. 
\item Assume that $\mathcal{C}$ has coproducts and $\mathcal{E}$ is closed under coproducts. Then the class of
$\mathcal{D}$-$\mathcal{E}$-flat objects of $\mathcal{C}$ is closed under coproducts.
\item Assume that $\mathcal{C}$ has direct limits and $\mathcal{E}$ is closed under direct limits. Then the class of
$\mathcal{D}$-$\mathcal{E}$-flat objects of $\mathcal{C}$ is closed under direct limits.
\item Every object of $\mathcal{C}$ is $\mathcal{D}$-$\mathcal{E}$-flat if and only if $\mathcal{D}\subseteq
\mathcal{E}$.
\item Assume that $\mathcal{E}$ is $\mathcal{D}$-injectively generated by a class $\mathcal{M}$ of objects of
$\mathcal{C}$. Then every object of $\mathcal{C}$ is $\mathcal{D}$-$\mathcal{E}$-flat if and only if every object of
$\mathcal{M}$ is $\mathcal{D}$-injective. 
\end{enumerate}
\end{enumerate}
\end{prop}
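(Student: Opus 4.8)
The plan is to prove part (1) only; part (2) follows by the dual argument (deflations in place of inflations, pullbacks for pushouts, coproducts for products, and so on). Items (iv) and (v) are immediate. For (iv): if $\mathcal{D}\subseteq\mathcal{E}$ then every $\mathcal{D}$-inflation is an $\mathcal{E}$-inflation, so every object of $\mathcal{C}$ is $\mathcal{D}$-$\mathcal{E}$-divisible; conversely, if every object is $\mathcal{D}$-$\mathcal{E}$-divisible, then for any $\mathcal{D}$-conflation $X\rightarrowtail Y\twoheadrightarrow Z$ divisibility of $X$ forces it to be an $\mathcal{E}$-conflation, whence $\mathcal{D}\subseteq\mathcal{E}$. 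Then (v) follows at once from Proposition \ref{p:Ext}(1): every object of $\mathcal{C}$ is $\mathcal{D}$-$\mathcal{E}$-divisible if and only if ${\rm Ext}^1_{\mathcal{D}}(M,X)=0$ for every $M\in\mathcal{M}$ and every object $X$ of $\mathcal{C}$, which is exactly the assertion that every $M\in\mathcal{M}$ is $\mathcal{D}$-projective.

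The closure statements (i)--(iii) all follow the same pattern. To show that an object $W$ (a $\mathcal{D}$-extension, a product, or a suitable direct limit of $\mathcal{D}$-$\mathcal{E}$-divisible objects) is $\mathcal{D}$-$\mathcal{E}$-divisible, one fixes an arbitrary $\mathcal{D}$-inflation $u\colon W\rightarrowtail Y$, propagates it to the building blocks (using $[E2^{\rm op}]$ to form pushouts where appropriate), upgrades the resulting $\mathcal{D}$-inflations out of the building blocks to $\mathcal{E}$-inflations by their divisibility, recombines these into a single $\mathcal{E}$-inflation through which $u$ factors, and finishes by the obscure axiom for $\mathcal{E}$ (Remark \ref{r:max}): since $u$ has a cokernel and is followed by a morphism whose composite with $u$ is an $\mathcal{E}$-inflation, $u$ itself is an $\mathcal{E}$-inflation. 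For the ``$\mathcal{E}$-inflation'' half of (i), with $V$ divisible and $W\rightarrowtail V$ an $\mathcal{E}$-inflation, the point is that the pushout square of $u$ along $W\rightarrowtail V$ is at the same time the pushout square of $W\rightarrowtail V$ along $u$, so one of its legs is a $\mathcal{D}$-inflation $V\rightarrowtail P$ (hence an $\mathcal{E}$-inflation, $V$ being divisible) and the other leg $Y\rightarrowtail P$ is an $\mathcal{E}$-inflation by $[E2^{\rm op}]$ for $\mathcal{E}$; the $\mathcal{E}$-inflation $W\rightarrowtail V\rightarrowtail P$ then equals $(Y\rightarrowtail P)\circ u$, which is the factorization needed for the obscure axiom. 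For (ii) one propagates $u$ along the product projections and recombines using that $\mathcal{E}$ is closed under products. For (iii) divisibility of the $X_i$ turns the $\mathcal{D}$-inflations $f_{ij}$ into $\mathcal{E}$-inflations; propagating $u$ over the system one realizes the $\mathcal{D}$-conflation $\varinjlim X_i\rightarrowtail Y\twoheadrightarrow Z$ as a direct limit of $\mathcal{D}$-conflations $X_i\rightarrowtail Y_i\twoheadrightarrow Z$, each an $\mathcal{E}$-conflation by divisibility of $X_i$, and one concludes from the hypothesis that $\mathcal{E}$ is closed under direct limits. The closing assertion of (iii) is then the special case in which a coproduct is written as the direct limit of its finite subsums, whose transition maps are split inflations (hence both $\mathcal{D}$- and $\mathcal{E}$-inflations) and whose terms are finite direct sums of $\mathcal{D}$-$\mathcal{E}$-divisible objects, themselves $\mathcal{D}$-$\mathcal{E}$-divisible by the same pushout argument since a finite direct sum of $\mathcal{E}$-conflations is always an $\mathcal{E}$-conflation.

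I expect the main obstacle to be the first assertion of (i), that the class of $\mathcal{D}$-$\mathcal{E}$-divisible objects is closed under $\mathcal{D}$-extensions, since it requires a genuine pushout--pullback argument rather than the element-wise diagram chase available in the abelian case of \cite{Montano}. Given a $\mathcal{D}$-conflation $X'\rightarrowtail X\stackrel{p}\twoheadrightarrow X''$ with $X'$ and $X''$ divisible, and a $\mathcal{D}$-inflation $u\colon X\rightarrowtail Y$, I would argue as follows. The composite $X'\rightarrowtail X\rightarrowtail Y$ is a $\mathcal{D}$-inflation, hence an $\mathcal{E}$-inflation by divisibility of $X'$; let $q\colon Y\twoheadrightarrow Y''$ be its cokernel, so $X'\rightarrowtail Y\twoheadrightarrow Y''$ is an $\mathcal{E}$-conflation. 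Since $qu$ vanishes on $X'$ it factors as $\bar{\jmath}p$ for a unique $\bar{\jmath}\colon X''\to Y''$, and by the noether-type isomorphism valid in every exact category (obtainable from Lemma \ref{l:POPB} and $[E2^{\rm op}]$) the sequence $X''\stackrel{\bar{\jmath}}\rightarrowtail Y''\twoheadrightarrow Z$ is a $\mathcal{D}$-conflation with $Z={\rm coker}\,u$; divisibility of $X''$ then makes $\bar{\jmath}$ an $\mathcal{E}$-inflation. Forming the pullback $P=Y\times_{Y''}X''$, which by Lemma \ref{l:rec} is the kernel of the composite $\mathcal{E}$-deflation $Y\stackrel{q}\twoheadrightarrow Y''\twoheadrightarrow Z$ and hence an $\mathcal{E}$-subobject of $Y$ with cokernel $Z$, one obtains an $\mathcal{E}$-conflation $X'\rightarrowtail P\twoheadrightarrow X''$; the pullback property provides a morphism $X\to P$ compatible with the $\mathcal{D}$-conflations $X'\rightarrowtail X\twoheadrightarrow X''$ and $X'\rightarrowtail P\twoheadrightarrow X''$, which is an isomorphism by the short five lemma in exact categories. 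Therefore $u$ factors as an isomorphism $X\cong P$ followed by the $\mathcal{E}$-inflation $P\rightarrowtail Y$, so $u$ is an $\mathcal{E}$-inflation. Throughout, the only delicate point is to ensure that the pushouts and pullbacks invoked exist and are again conflations in the exact (possibly non-abelian) setting; this is guaranteed by $[E2]$, $[E2^{\rm op}]$, Lemmas \ref{l:POPB} and \ref{l:rec}, the obscure axiom, and the short five lemma, which here replace the element-wise manipulations of \cite{Montano}.
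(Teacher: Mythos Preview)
Your proposal is correct. The paper does not actually prove this proposition, citing it instead as an immediate generalization of \cite[15.7, 15.14]{Montano}; however, the arguments you outline are essentially those appearing later in the paper, in the proof of the more general Proposition~\ref{p:closure} (for $\mathcal{D}$-extensions, products, and direct limits) and in the proof of (iii)$\Rightarrow$(i) of Proposition~\ref{p:ses} (for closure under $\mathcal{E}$-inflations). The one difference worth noting is that your conclusion of the $\mathcal{D}$-extension argument via a pullback and the short five lemma is more elaborate than necessary: once you know that $X'\rightarrowtail Y$ and $X''\rightarrowtail Y''$ are $\mathcal{E}$-inflations, the composite $Y\twoheadrightarrow Y''\twoheadrightarrow Z$ is an $\mathcal{E}$-deflation by $[E1]$ for $\mathcal{E}$, and its kernel is precisely $u$, so $u$ is an $\mathcal{E}$-inflation without further work---this is how the paper argues in Proposition~\ref{p:closure}(1)(i).
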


The following proposition is one of the key results on relatively divisible and relatively flat objects in exact
categories, having a number of important consequences.

\begin{prop} \label{p:diagram} Let $\mathcal{C}$ be an additive category, and let $\mathcal{D}$ and $\mathcal{E}$
be exact structures on $\mathcal{C}$. Consider the following commutative diagram in $\mathcal{C}$:
\[\SelectTips{cm}{}
\xymatrix{
X \ar@{=}[d] \ar@{>->}[r]^i & Y \ar[d]^f \ar@{->>}[r]^d & Z \ar[d]^g \ar@{>->}[r]^j & U \ar[d]^h \ar@{->>}[r]^p & V
\ar@{=}[d] \\
X \ar@{>->}[r]_{i'} & Y' \ar@{->>}[r]_{d'} & Z' \ar@{>->}[r]_{j'} & U' \ar@{->>}[r]_{p'} & V 
}\]
where each row consists of two $\mathcal{D}$-conflations. 
\begin{enumerate}
\item Assume that ${\rm Ext}^1_{\mathcal{D}}(Z,X)=0$ and $Y'$ is $\mathcal{D}$-$\mathcal{E}$-divisible. Then $Z'$ is
$\mathcal{D}$-$\mathcal{E}$-divisible.
\item Assume that ${\rm Ext}^1_{\mathcal{D}}(V,Z')=0$ and $U$ is $\mathcal{D}$-$\mathcal{E}$-flat. Then $Z$ is
$\mathcal{D}$-$\mathcal{E}$-flat.
\end{enumerate}
\end{prop}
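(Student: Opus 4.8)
The plan is to prove part~(1) and to obtain part~(2) by duality, reading the whole configuration in the opposite exact category $(\mathcal{C}^{\mathrm{op}},\mathcal{D}^{\mathrm{op}},\mathcal{E}^{\mathrm{op}})$: there inflations and deflations are interchanged, $\mathcal{D}$-$\mathcal{E}$-divisible objects turn into $\mathcal{D}$-$\mathcal{E}$-flat objects, and the displayed diagram goes to a diagram of the same shape in which the two five-term rows are reversed and the roles of $X$ and $V$, of $Y$ and $U$, and of the primed and the unprimed entries are exchanged; under this translation the hypotheses and the conclusion of~(1) become precisely those of~(2).

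For~(1), only the left half of the diagram is needed, i.e. the morphism of $\mathcal{D}$-conflations $(X\overset{i}{\rightarrowtail}Y\overset{d}{\twoheadrightarrow}Z)\to(X\overset{i'}{\rightarrowtail}Y'\overset{d'}{\twoheadrightarrow}Z')$ which is the identity on $X$. Since $\mathrm{Ext}^1_{\mathcal{D}}(Z,X)=0$, the $\mathcal{D}$-conflation $X\overset{i}{\rightarrowtail}Y\overset{d}{\twoheadrightarrow}Z$ splits; by Lemma~\ref{l:POPB}(2) the square $YZZ'Y'$ is simultaneously a pushout and a pullback, and I would transport the splitting across this bicartesian square to produce an isomorphism $Y'\cong X\oplus Z'$ identifying $i'$ and $d'$ with the canonical inclusion and projection. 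Granting this, the rest is formal: given an arbitrary $\mathcal{D}$-inflation $a\colon Z'\rightarrowtail W$, complete it to a $\mathcal{D}$-conflation $Z'\overset{a}{\rightarrowtail}W\overset{c}{\twoheadrightarrow}N$ and form $\alpha:=a\oplus 1_X\colon Z'\oplus X\rightarrowtail W\oplus X$; this is a $\mathcal{D}$-inflation (finite direct sums of $\mathcal{D}$-conflations are $\mathcal{D}$-conflations), and under the identification $Y'\cong X\oplus Z'$ it is a $\mathcal{D}$-inflation starting at $Y'$. As $Y'$ is $\mathcal{D}$-$\mathcal{E}$-divisible, $\alpha$ is an $\mathcal{E}$-inflation.

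It then remains to descend from $\alpha$ to $a$. Writing $\iota_{Z'}\colon Z'\rightarrowtail Z'\oplus X$ and $\iota_W\colon W\rightarrowtail W\oplus X$ for the canonical split inclusions, one has $\alpha\circ\iota_{Z'}=\iota_W\circ a$; hence $\iota_W\circ a$, being a composite of the two $\mathcal{E}$-inflations $\iota_{Z'}$ and $\alpha$, is an $\mathcal{E}$-inflation by the dual of $[E1]$ (Remark~\ref{r:max}). Since $a$ has a cokernel, the obscure axiom (Remark~\ref{r:max}) forces $a$ itself to be an $\mathcal{E}$-inflation, and as $a$ was arbitrary, $Z'$ is $\mathcal{D}$-$\mathcal{E}$-divisible. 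The step I expect to be the main obstacle is the one connecting the two rows: deducing the splitting of $X\rightarrowtail Y'\twoheadrightarrow Z'$ (equivalently $Y'\cong X\oplus Z'$) from the splitting of $X\rightarrowtail Y\twoheadrightarrow Z$ and the bicartesian square $YZZ'Y'$; this is where one must use Lemma~\ref{l:POPB} carefully, aided by the pasting laws for pushout and pullback squares of Lemma~\ref{l:rec}. Everything after that point is routine manipulation of split sequences together with the obscure axiom.
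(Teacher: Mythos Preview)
Your concern about step~3 is well founded: it is not just the main obstacle, it is where the argument breaks. The bicartesian square $YZZ'Y'$ does \emph{not} let you transport a splitting of the top conflation to a splitting of the bottom one. A concrete counterexample in abelian groups: take $X=Y=\mathbb{Z}$, $Z=0$, $i=1_{\mathbb{Z}}$, $d=0$, and $Y'=\mathbb{Z}$, $Z'=\mathbb{Z}/2\mathbb{Z}$, $i'=\text{multiplication by }2$, $d'=\text{canonical surjection}$; the vertical maps are $f=\text{multiplication by }2$ and $g=0$. Both squares commute, the right square is bicartesian, and ${\rm Ext}^1(Z,X)={\rm Ext}^1(0,\mathbb{Z})=0$, so the top row splits. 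But the bottom row $\mathbb{Z}\xrightarrow{\times 2}\mathbb{Z}\twoheadrightarrow\mathbb{Z}/2\mathbb{Z}$ certainly does not split, so $Y'\not\cong X\oplus Z'$ as conflations. Once this isomorphism fails, the rest of your argument (building $a\oplus 1_X$ out of $Y'$ and invoking the obscure axiom) has nothing to stand on.

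What the hypothesis ${\rm Ext}^1_{\mathcal{D}}(Z,X)=0$ actually buys is weaker but still useful: applying ${\rm Hom}(Z,-)$ to the bottom conflation shows that $g\colon Z\to Z'$ lifts through $d'$ to some $\alpha\colon Z\to Y'$ (equivalently, compose a section $s\colon Z\to Y$ of $d$ with $f$). The paper exploits precisely this lift, together with the \emph{right} half of the diagram, which you discarded. Concretely, it first observes that the square with corners $Y'\oplus Z$, $Y'\oplus U$, $Z'$, $U'$ is a pushout, then factors the map $Y'\oplus Z\to Z'$ through $Y'$ via $[\,1\ \alpha\,]$; the pasting law of Lemma~\ref{l:rec} then produces a new commutative diagram in which $j'\colon Z'\rightarrowtail U'$ is the pushout of a $\mathcal{D}$-inflation $l\colon Y'\rightarrowtail W$. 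Divisibility of $Y'$ makes $l$ an $\mathcal{E}$-inflation, and pushing out along $d'$ transfers this to $j'$. So the right-hand conflations $Z\rightarrowtail U\twoheadrightarrow V$ and $Z'\rightarrowtail U'\twoheadrightarrow V$ are not decoration; they are the arena in which the divisibility of $Y'$ is converted into information about $Z'$.
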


\begin{proof} (1) Consider the following commutative diagram: 
\[\SelectTips{cm}{}
\xymatrix{
Y'\oplus Z \ar@{>->}[r]^{\left [\begin{smallmatrix} 1&0 \\ 0&j \end{smallmatrix}\right ]} \ar[d]_-{\left
[\begin{smallmatrix} d'&g \end{smallmatrix}\right ]} & Y'\oplus U \ar@{->>}[r]^-{\left [\begin{smallmatrix} 0&p
\end{smallmatrix}\right ]} \ar[d]_-{\left [\begin{smallmatrix} j'd'&h \end{smallmatrix}\right ]} & V \ar@{=}[d] \\
Z' \ar@{>->}[r]_{j'} & U' \ar@{->>}[r]_{p'} & V  
}\]
where the rows are $\mathcal{D}$-conflations. Then the left square is a pushout by Lemma \ref{l:POPB}. 

Since ${\rm Ext}^1_{\mathcal{D}}(Z,X)=0$, there exists a morphism $\alpha:Z\to Y'$ such that $d'\alpha=g$. Consider
the pushout of the morphisms $\left [\begin{smallmatrix} 1 & \alpha \end{smallmatrix}\right ]:Y'\oplus Z\to Y'$ 
and $\left [\begin{smallmatrix} 1&0 \\ 0&j \end{smallmatrix}\right ]:Y'\oplus Z\to Y'\oplus U$, and denote by $\left
[\begin{smallmatrix} w_1 & w_2 \end{smallmatrix}\right ]:Y'\oplus U\to W$ and $l:Y'\to W$ the resulting morphisms.
Since $j' \left [\begin{smallmatrix} d' & g \end{smallmatrix}\right ]=\left [\begin{smallmatrix} j'd' & h
\end{smallmatrix}\right ] \left [\begin{smallmatrix} 1&0 \\ 0&j \end{smallmatrix}\right ]$, the pushout property implies
the existence of a unique morphism $q:W\to U'$ such that $ql=j'd'$ and $q \left [\begin{smallmatrix} w_1 & w_2
\end{smallmatrix}\right ]=\left [\begin{smallmatrix} j'd' & h \end{smallmatrix}\right ]$. Hence we have the following
commutative diagram:
\[\SelectTips{cm}{}
\xymatrix{
Y'\oplus Z \ar[d]_{\left [\begin{smallmatrix} 1&0 \\ 0&j \end{smallmatrix}\right ]} \ar[r]^-{\left [\begin{smallmatrix}
1 & \alpha \end{smallmatrix}\right ]} & Y' \ar[d]^l \ar[r]^{d'} & Z' \ar[d]^{j'} \\
Y'\oplus U \ar[r]_-{\left [\begin{smallmatrix} w_1 & w_2 \end{smallmatrix}\right ]} & W \ar[r]_q & U' 
}\]
where the exterior rectangle is a pushout by the first part of the proof. Then the right square is a pushout by Lemma
\ref{l:rec} and we obtain the following commutative diagram:
\[\SelectTips{cm}{}
\xymatrix{
X \ar@{=}[d] \ar@{>->}[r]^{i'} & Y' \ar@{>->}[d]^l \ar@{->>}[r]^{d'} & Z' \ar@{>->}[d]^{j'} \\
X \ar@{>->}[r]_w & W \ar@{->>}[d]_v \ar@{->>}[r]_q & U' \ar@{->>}[d]^{p'} \\
 & V \ar@{=}[r] & V 
}\]
where the rows and the columns are $\mathcal{D}$-conflations. Since $Y'$ is $\mathcal{D}$-$\mathcal{E}$-divisible, the
first column is an $\mathcal{E}$-conflation. Then the last column is also an $\mathcal{E}$-conflation. Hence
$Z'$ is $\mathcal{D}$-$\mathcal{E}$-divisible.
\end{proof}

\begin{theor} \label{t:cohcot} Let $\mathcal{C}$ be an additive category, let $\mathcal{D}$ and $\mathcal{E}$ be exact
structures on $\mathcal{C}$, and $(\mathcal{A},\mathcal{B})$ a $\mathcal{D}$-cotorsion pair. Assume that $\mathcal{C}$
has enough $\mathcal{D}$-injectives and enough $\mathcal{D}$-projectives.
\begin{enumerate}
\item Assume that $\mathcal{E}$ is $\mathcal{D}$-projectively generated by a class $\mathcal{M}$ of objects of
$\mathcal{C}$. Then the following are equivalent:
\begin{enumerate}[(i)]
\item For every $\mathcal{D}$-conflation $X\rightarrowtail Y'\twoheadrightarrow Z'$ in $\mathcal{C}$ with $X\in
\mathcal{B}$ and $Y'$ $\mathcal{D}$-$\mathcal{E}$-divisible, $Z'$ is $\mathcal{D}$-$\mathcal{E}$-divisible.
\item For every $\mathcal{D}$-conflation $Z\rightarrowtail U\twoheadrightarrow V$ in $\mathcal{C}$ with $V\in
\mathcal{M}$ and $U$ $\mathcal{D}$-projective, $Z\in \mathcal{A}$.
\end{enumerate}
\item Assume that $\mathcal{E}$ is $\mathcal{D}$-injectively generated by a class $\mathcal{M}$ of objects of
$\mathcal{C}$. Then the following are equivalent:
\begin{enumerate}[(i)]
\item For every $\mathcal{D}$-conflation $Z\rightarrowtail U\twoheadrightarrow V$ in $\mathcal{C}$ with $V\in
\mathcal{A}$ and $U$ $\mathcal{D}$-$\mathcal{E}$-flat, $Z$ is $\mathcal{D}$-$\mathcal{E}$-flat.
\item For every $\mathcal{D}$-conflation $X\rightarrowtail Y'\twoheadrightarrow Z'$ in $\mathcal{C}$ with $X\in
\mathcal{M}$ and $Y'$ $\mathcal{D}$-injective, $Z'\in \mathcal{B}$.
\end{enumerate}
\end{enumerate}
\end{theor}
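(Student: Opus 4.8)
The plan is to translate both conditions into vanishings of ${\rm Ext}^1_{\mathcal{D}}$ and to argue by dimension shifting, using the two standing hypotheses on $\mathcal{C}$ separately. Since $\mathcal{E}$ is $\mathcal{D}$-projectively generated by $\mathcal{M}$, Proposition~\ref{p:Ext}(1) identifies $Div(\mathcal{D}\textrm{-}\mathcal{E})$ with the class of objects $W$ such that ${\rm Ext}^1_{\mathcal{D}}(\mathcal{M},W)=0$; also every $\mathcal{D}$-injective object is $\mathcal{D}$-$\mathcal{E}$-divisible, because a $\mathcal{D}$-inflation starting with it splits and hence is an $\mathcal{E}$-inflation. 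I will use enough $\mathcal{D}$-projectives in the direction (ii)$\Rightarrow$(i) and enough $\mathcal{D}$-injectives in the direction (i)$\Rightarrow$(ii). Part~(2) is the formal dual of part~(1) (interchanging inflations with deflations, $\mathcal{D}$-injectives with $\mathcal{D}$-projectives, $\mathcal{A}$ with $\mathcal{B}$, and divisible with flat), so I only discuss part~(1).

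For (ii)$\Rightarrow$(i), let $X\rightarrowtail Y'\stackrel{d'}{\twoheadrightarrow}Z'$ be a $\mathcal{D}$-conflation with $X\in\mathcal{B}$ and $Y'$ $\mathcal{D}$-$\mathcal{E}$-divisible; by Proposition~\ref{p:Ext}(1) it suffices to split an arbitrary $\mathcal{D}$-conflation $Z'\rightarrowtail E\twoheadrightarrow M$ with $M\in\mathcal{M}$. Using enough $\mathcal{D}$-projectives, pick a $\mathcal{D}$-conflation $K\stackrel{c}{\rightarrowtail}P\twoheadrightarrow M$ with $P$ $\mathcal{D}$-projective; hypothesis~(ii) gives $K\in\mathcal{A}$, whence ${\rm Ext}^1_{\mathcal{D}}(K,X)=0$ because $X\in\mathcal{B}=\mathcal{A}^{\perp_{\mathcal{D}}}$. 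Pull $Z'\rightarrowtail E\twoheadrightarrow M$ back along $P\twoheadrightarrow M$ (see Lemma~\ref{l:POPB}); as $P$ is $\mathcal{D}$-projective the resulting $\mathcal{D}$-conflation $Z'\rightarrowtail E'\twoheadrightarrow P$ splits, and a chase of the pullback square reduces the splitting of $Z'\rightarrowtail E\twoheadrightarrow M$ to extending a certain morphism $\chi:K\to Z'$ along $c$. Now lift $\chi$ along $d'$ to a morphism $K\to Y'$ (possible since ${\rm Ext}^1_{\mathcal{D}}(K,X)=0$), extend this lift along $c$ to a morphism $P\to Y'$ (possible since ${\rm Ext}^1_{\mathcal{D}}(M,Y')=0$ by Proposition~\ref{p:Ext}(1) and divisibility of $Y'$), and compose with $d'$ to obtain the required extension of $\chi$. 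Equivalently, these morphisms organize into a commutative diagram of the shape in Proposition~\ref{p:diagram} with top row $X\rightarrowtail Y\twoheadrightarrow K\rightarrowtail P\twoheadrightarrow M$ and bottom row $X\rightarrowtail Y'\twoheadrightarrow Z'\rightarrowtail E\twoheadrightarrow M$, and Proposition~\ref{p:diagram}(1) applies since ${\rm Ext}^1_{\mathcal{D}}(K,X)=0$.

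For (i)$\Rightarrow$(ii), let $Z\stackrel{j}{\rightarrowtail}U\twoheadrightarrow V$ be a $\mathcal{D}$-conflation with $V\in\mathcal{M}$ and $U$ $\mathcal{D}$-projective; to prove $Z\in\mathcal{A}={}^{\perp_{\mathcal{D}}}\mathcal{B}$ we split an arbitrary $\mathcal{D}$-conflation $B\rightarrowtail E\twoheadrightarrow Z$ with $B\in\mathcal{B}$. Using enough $\mathcal{D}$-injectives, pick a $\mathcal{D}$-conflation $B\stackrel{a}{\rightarrowtail}I\stackrel{b}{\twoheadrightarrow}C$ with $I$ $\mathcal{D}$-injective; then $I$ is $\mathcal{D}$-$\mathcal{E}$-divisible, so applying~(i) to this conflation (note $B\in\mathcal{B}$) shows $C$ is $\mathcal{D}$-$\mathcal{E}$-divisible, whence ${\rm Ext}^1_{\mathcal{D}}(V,C)=0$ by Proposition~\ref{p:Ext}(1). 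Push $B\rightarrowtail E\twoheadrightarrow Z$ out along $a$ (see Lemma~\ref{l:POPB}); as $I$ is $\mathcal{D}$-injective the resulting $\mathcal{D}$-conflation $I\rightarrowtail E'\twoheadrightarrow Z$ splits, and a chase of the pushout square reduces the splitting of $B\rightarrowtail E\twoheadrightarrow Z$ to lifting a certain morphism $\psi:Z\to C$ along $b$. Now extend $\psi$ along $j$ to a morphism $U\to C$ (possible since ${\rm Ext}^1_{\mathcal{D}}(V,C)=0$), lift this extension along $b$ to a morphism $U\to I$ (possible since $U$ is $\mathcal{D}$-projective), and restrict to $Z$ to obtain the required lift of $\psi$. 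Thus ${\rm Ext}^1_{\mathcal{D}}(Z,B)=0$ for every $B\in\mathcal{B}$, i.e. $Z\in\mathcal{A}$.

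The main obstacle is not any single step but the two ``reduction'' claims: one must pin down the precise morphisms $\chi$ and $\psi$ for which extendability of $\chi$ along $c$ (respectively liftability of $\psi$ along $b$) is equivalent to splitting the original conflation, a routine but slightly fiddly chase of the pullback (respectively pushout) square together with the section (respectively retraction) produced by $\mathcal{D}$-projectivity (respectively $\mathcal{D}$-injectivity). Once this is set up, the two successive lift/extension steps are immediate from the relevant ${\rm Ext}^1_{\mathcal{D}}$-vanishings. A secondary point is the bookkeeping of the hypotheses: enough $\mathcal{D}$-projectives is used both to resolve $M\in\mathcal{M}$ and to make hypothesis~(ii) applicable in (ii)$\Rightarrow$(i), while enough $\mathcal{D}$-injectives is used to coresolve $B\in\mathcal{B}$ and hence produce a $\mathcal{D}$-$\mathcal{E}$-divisible cokernel $C$ via~(i) in (i)$\Rightarrow$(ii), so that each implication genuinely uses one of the standing assumptions.
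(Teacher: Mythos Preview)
Your proof is correct and follows essentially the same route as the paper's: both directions are proved by dimension shifting, using enough $\mathcal{D}$-projectives in (ii)$\Rightarrow$(i) and enough $\mathcal{D}$-injectives in (i)$\Rightarrow$(ii), together with the identification $Div(\mathcal{D}\textrm{-}\mathcal{E})=\mathcal{M}^{\perp_{\mathcal{D}}}$ from Proposition~\ref{p:Ext}. The only cosmetic difference is packaging: the paper assembles the relevant morphisms into the two-row diagram of Proposition~\ref{p:diagram} (embedding the middle term $Y$ into an injective in (i)$\Rightarrow$(ii), and covering $U'$ by a projective in (ii)$\Rightarrow$(i)) and invokes that proposition once, whereas you coresolve $B$ (respectively resolve $M$) directly and spell out the two successive lift/extend steps by hand; as you yourself note, your argument can be reorganised into exactly the diagram of Proposition~\ref{p:diagram}.
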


\begin{proof} (1) (i)$\Rightarrow$(ii) Let $Z\stackrel{j}\rightarrowtail U\stackrel{p}\twoheadrightarrow V$ be a
$\mathcal{D}$-conflation with $V\in \mathcal{M}$ and $U$ $\mathcal{D}$-projective. Let $X\stackrel{i}\rightarrowtail
Y\stackrel{d}\twoheadrightarrow Z$ be a $\mathcal{D}$-conflation with $X\in \mathcal{B}$. Consider a
$\mathcal{D}$-inflation $f:Y\to Y'$ for some $\mathcal{D}$-injective object $Y'$ of $\mathcal{C}$, and denote by
$i'=fi:X\to Y'$ the composed $\mathcal{D}$-inflation and by $d':Y'\to Z'$ its cokernel. Then there is a morphism $g:Z\to
Z'$ such that $gd=d'f$. Now consider the pushout of the $\mathcal{D}$-inflation $j$ and the morphism $g$ to obtain the
commutative diagram from Proposition \ref{p:diagram}. By hypothesis, $Z'$ is $\mathcal{D}$-$\mathcal{E}$-divisible,
hence we have ${\rm Ext}^1_{\mathcal{D}}(V,Z')=0$ by Proposition \ref{p:Ext}. Since $U$ is $\mathcal{D}$-projective,
we have ${\rm Ext}^1_{\mathcal{D}}(U,X)=0$. It follows that ${\rm Ext}^1_{\mathcal{D}}(Z,X)=0$ by Proposition
\ref{p:diagram}. Hence $Z\in \mathcal{A}$. 

(ii)$\Rightarrow$(i) Let $X\stackrel{i'}\rightarrowtail Y'\stackrel{d'}\twoheadrightarrow Z'$ be a
$\mathcal{D}$-conflation with $X\in \mathcal{B}$ and $Y'$ $\mathcal{D}$-$\mathcal{E}$-divisible. Let
$Z'\stackrel{j'}\rightarrowtail U'\stackrel{p'}\twoheadrightarrow V$ be a $\mathcal{D}$-conflation with $V\in
\mathcal{M}$. Consider a $\mathcal{D}$-deflation $h:U\to U'$ for some $\mathcal{D}$-projective object $U$ of
$\mathcal{C}$, and denote by $p=p'h:U\to V$ the composed $\mathcal{D}$-deflation and by $j:Z\to U$ its kernel. Then
there is a morphism $g:Z\to Z'$ such that $j'g=hj$. Now consider the pullback of the $\mathcal{D}$-deflation $d'$ and
the morphism $g$ to obtain the commutative diagram from Proposition \ref{p:diagram}. Since $Z\in \mathcal{A}$ and $X\in
\mathcal{B}$, we have ${\rm Ext}^1_{\mathcal{D}}(Z,X)=0$. Then $Z'$ is $\mathcal{D}$-$\mathcal{E}$-divisible by
Proposition \ref{p:diagram}.
\end{proof}

\begin{cor} \label{c:coh} Let $\mathcal{C}$ be an additive category, and let $\mathcal{D}$ and $\mathcal{E}$ be exact
structures on $\mathcal{C}$. Assume that $\mathcal{C}$ has enough $\mathcal{D}$-injectives and enough
$\mathcal{D}$-projectives.
\begin{enumerate}
\item Assume that $\mathcal{E}$ is $\mathcal{D}$-projectively generated by a class $\mathcal{M}$ of objects of
$\mathcal{C}$. Then the following are equivalent:
\begin{enumerate}[(i)]
\item The class of $\mathcal{D}$-$\mathcal{E}$-divisible objects of $\mathcal{C}$ is closed under
$\mathcal{D}$-deflations.
\item For every $\mathcal{D}$-conflation $Z\rightarrowtail U\twoheadrightarrow V$ with $V\in \mathcal{M}$ and $U$
$\mathcal{D}$-projective, $Z$ is $\mathcal{D}$-projective.
\end{enumerate}
\item Assume that $\mathcal{E}$ is $\mathcal{D}$-injectively generated by a class $\mathcal{M}$ of objects of
$\mathcal{C}$. Then the following are equivalent:
\begin{enumerate}[(i)]
\item The class of $\mathcal{D}$-$\mathcal{E}$-flat objects of $\mathcal{C}$ is closed under $\mathcal{D}$-inflations.
\item For every $\mathcal{D}$-conflation $X\rightarrowtail Y'\twoheadrightarrow Z'$ with $X\in \mathcal{M}$
and $Y'$ $\mathcal{D}$-injective, $Z'$ is $\mathcal{D}$-injective.
\end{enumerate}
\end{enumerate}
\end{cor}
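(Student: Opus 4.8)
The plan is to derive both parts of the corollary as special cases of Theorem~\ref{t:cohcot}, by choosing a suitable $\mathcal{D}$-cotorsion pair. For part~(1), I would apply Theorem~\ref{t:cohcot}(1) to the $\mathcal{D}$-cotorsion pair $(\mathcal{A},\mathcal{B})=(Proj(\mathcal{D}),\mathcal{C})$; here $Proj(\mathcal{D})^{\perp_{\mathcal{D}}}=\mathcal{C}$ because ${\rm Ext}^1_{\mathcal{D}}(P,-)=0$ for every $\mathcal{D}$-projective $P$, and ${}^{\perp_{\mathcal{D}}}\mathcal{C}=Proj(\mathcal{D})$ by the description of $\mathcal{D}$-projectivity in terms of vanishing of ${\rm Ext}^1_{\mathcal{D}}$, so this is indeed a $\mathcal{D}$-cotorsion pair (as already noted in the Remark after Definition~\ref{d:cot}). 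The hypotheses of Theorem~\ref{t:cohcot} are exactly the standing assumptions of the corollary (enough $\mathcal{D}$-injectives and enough $\mathcal{D}$-projectives, and $\mathcal{E}$ being $\mathcal{D}$-projectively generated by $\mathcal{M}$), so they transfer verbatim for this particular cotorsion pair; no extra assumption such as completeness or perfectness is needed.

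It then remains to observe that the two conditions of Theorem~\ref{t:cohcot}(1) specialize to the two conditions of the corollary. For condition~(i): since $\mathcal{B}=\mathcal{C}$, the requirement ``$X\in\mathcal{B}$'' is vacuous, so (i) reads: for every $\mathcal{D}$-conflation $X\rightarrowtail Y'\twoheadrightarrow Z'$ with $Y'$ $\mathcal{D}$-$\mathcal{E}$-divisible, $Z'$ is $\mathcal{D}$-$\mathcal{E}$-divisible. Here I would note that a $\mathcal{D}$-deflation $Y'\twoheadrightarrow Z'$ is precisely the deflation part of some $\mathcal{D}$-conflation $X\rightarrowtail Y'\twoheadrightarrow Z'$ (take $X$ to be its kernel), and conversely every such conflation produces a $\mathcal{D}$-deflation; hence this condition is literally the statement that $Div(\mathcal{D}\textrm{-}\mathcal{E})$ is closed under $\mathcal{D}$-deflations. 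For condition~(ii): since $\mathcal{A}=Proj(\mathcal{D})$, the conclusion ``$Z\in\mathcal{A}$'' says exactly that $Z$ is $\mathcal{D}$-projective, which is condition~(ii) of the corollary. This proves part~(1).

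Part~(2) I would obtain dually, applying Theorem~\ref{t:cohcot}(2) to the $\mathcal{D}$-cotorsion pair $(\mathcal{A},\mathcal{B})=(\mathcal{C},Inj(\mathcal{D}))$ (a $\mathcal{D}$-cotorsion pair for the dual reason: ${}^{\perp_{\mathcal{D}}}Inj(\mathcal{D})=\mathcal{C}$ and $\mathcal{C}^{\perp_{\mathcal{D}}}=Inj(\mathcal{D})$). Condition~(i) of Theorem~\ref{t:cohcot}(2) then has the vacuous hypothesis ``$V\in\mathcal{A}=\mathcal{C}$'' and, using that $\mathcal{D}$-inflations $Z\rightarrowtail U$ are exactly the inflation parts of $\mathcal{D}$-conflations $Z\rightarrowtail U\twoheadrightarrow V$, becomes the closure of $Flat(\mathcal{D}\textrm{-}\mathcal{E})$ under $\mathcal{D}$-inflations; condition~(ii), via $\mathcal{B}=Inj(\mathcal{D})$, becomes the assertion that $Z'$ is $\mathcal{D}$-injective.

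There is essentially no hard step here: the real content sits in Theorem~\ref{t:cohcot}, and the only things to check are the elementary facts that $(Proj(\mathcal{D}),\mathcal{C})$ and $(\mathcal{C},Inj(\mathcal{D}))$ are $\mathcal{D}$-cotorsion pairs and that the conflation-formulated condition~(i) is the same as ``closed under $\mathcal{D}$-deflations'' (respectively ``closed under $\mathcal{D}$-inflations''). The only point requiring a moment's care is making sure the corollary's hypotheses genuinely supply all hypotheses of Theorem~\ref{t:cohcot} for these specific cotorsion pairs — which they do.
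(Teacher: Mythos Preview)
Your proposal is correct and matches the paper's own proof exactly: the paper simply says to apply Theorem~\ref{t:cohcot} with $(\mathcal{A},\mathcal{B})=(Proj(\mathcal{D}),\mathcal{C})$ for part~(1) and $(\mathcal{A},\mathcal{B})=(\mathcal{C},Inj(\mathcal{D}))$ for part~(2). Your additional explanations of why these are $\mathcal{D}$-cotorsion pairs and why the conditions specialize correctly are accurate and merely flesh out what the paper leaves implicit.
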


\begin{proof} Use Theorem \ref{t:cohcot} for the $\mathcal{D}$-cotorsion pair $(\mathcal{A},\mathcal{B})$, where
$\mathcal{A}$ is the class of $\mathcal{D}$-projective objects and $\mathcal{B}=\mathcal{C}$ for (1),
and $\mathcal{A}=\mathcal{C}$ and $\mathcal{B}$ is the class of $\mathcal{D}$-injective objects for (2). 
\end{proof}

Let $\mathcal{C}$ be an additive category and $\mathcal{D}$ an exact structure on $\mathcal{C}$. Generalizing the
corresponding notions from module categories, a class $\mathcal{M}$ of objects of $\mathcal{C}$ is called
\emph{$\mathcal{D}$-resolving} (respectively \emph{$\mathcal{D}$-coresolving}) if it contains all
$\mathcal{D}$-projective objects ($\mathcal{D}$-injective objects), it is closed under $\mathcal{D}$-extensions and for
every $\mathcal{D}$-conflation $X\rightarrowtail Y\twoheadrightarrow Z$ with $Y,Z\in \mathcal{M}$ ($X,Y\in
\mathcal{M}$) one has $X\in \mathcal{M}$ ($Z\in \mathcal{M}$).  

\begin{cor} Let $\mathcal{C}$ be an additive category, $\mathcal{D}$ an exact structures on $\mathcal{C}$, and
$(\mathcal{A},\mathcal{B})$ a $\mathcal{D}$-cotorsion pair. Assume that $\mathcal{C}$ has enough
$\mathcal{D}$-injectives and enough $\mathcal{D}$-projectives. View $\mathcal{B}$ as the class of 
relatively divisible objects of $\mathcal{C}$ for some exact structure $\mathcal{D}$-projectively generated 
by a class of objects $\mathcal{N}$, and $\mathcal{A}$ as the class of relatively flat objects of $\mathcal{C}$ 
for some exact structure $\mathcal{D}$-injectively generated by a class of objects $\mathcal{M}$.
Then the following are equivalent:
\begin{enumerate}[(i)]
\item $\mathcal{B}$ is $\mathcal{D}$-coresolving.
\item For every $\mathcal{D}$-conflation $Z\rightarrowtail U\twoheadrightarrow V$ in $\mathcal{C}$ with $V\in
\mathcal{N}$ and $U$ $\mathcal{D}$-projective, $Z\in \mathcal{A}$.
\item $\mathcal{A}$ is $\mathcal{D}$-resolving.
\item For every $\mathcal{D}$-conflation $X\rightarrowtail Y'\twoheadrightarrow Z'$ in $\mathcal{C}$ with $X\in
\mathcal{M}$ and $Y'$ $\mathcal{D}$-injective, $Z'\in \mathcal{B}$.
\end{enumerate}
\end{cor}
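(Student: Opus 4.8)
The plan is to deduce (i)$\Leftrightarrow$(ii) and (iii)$\Leftrightarrow$(iv) directly from Theorem~\ref{t:cohcot}, and then to connect the two halves by proving (i)$\Leftrightarrow$(iii) by hand. First recall that such $\mathcal{N}$ and $\mathcal{M}$ exist by Proposition~\ref{p:identif} (e.g.\ one may take $\mathcal{N}=\mathcal{A}$ and $\mathcal{M}=\mathcal{B}$), and that by Proposition~\ref{p:Ext} we may write $\mathcal{B}=Div(\mathcal{D}\textrm{-}\mathcal{E}_1)$ and $\mathcal{A}=Flat(\mathcal{D}\textrm{-}\mathcal{E}_2)$ with $\mathcal{E}_1=\pi^{-1}_{\mathcal{D}}(\mathcal{N})$ and $\mathcal{E}_2=\iota^{-1}_{\mathcal{D}}(\mathcal{M})$. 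Since $(\mathcal{A},\mathcal{B})$ is a $\mathcal{D}$-cotorsion pair, $\mathcal{B}$ contains every $\mathcal{D}$-injective object, $\mathcal{A}$ contains every $\mathcal{D}$-projective object, and both classes are closed under $\mathcal{D}$-extensions; hence ``$\mathcal{B}$ is $\mathcal{D}$-coresolving'' amounts precisely to: $Z'\in\mathcal{B}$ whenever $X\rightarrowtail Y'\twoheadrightarrow Z'$ is a $\mathcal{D}$-conflation with $X,Y'\in\mathcal{B}$, and ``$\mathcal{A}$ is $\mathcal{D}$-resolving'' amounts precisely to: $Z\in\mathcal{A}$ whenever $Z\rightarrowtail U\twoheadrightarrow V$ is a $\mathcal{D}$-conflation with $U,V\in\mathcal{A}$. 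Using that $Y'$ is $\mathcal{D}$-$\mathcal{E}_1$-divisible iff $Y'\in\mathcal{B}$ and $U$ is $\mathcal{D}$-$\mathcal{E}_2$-flat iff $U\in\mathcal{A}$, condition~(i) here, so unwound, is exactly condition~(1)(i) of Theorem~\ref{t:cohcot} for $(\mathcal{A},\mathcal{B})$ and $\mathcal{E}_1$, while (ii) here is (1)(ii) there, so (i)$\Leftrightarrow$(ii); dually (iii) here is (2)(i) of Theorem~\ref{t:cohcot} for $(\mathcal{A},\mathcal{B})$ and $\mathcal{E}_2$, and (iv) here is (2)(ii) there, so (iii)$\Leftrightarrow$(iv).

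It remains to prove (i)$\Leftrightarrow$(iii), for which I would invoke the Yoneda groups ${\rm Ext}^n_{\mathcal{D}}(-,-)$, their long exact sequences, and one round of dimension shifting (all available in any exact category, the shifting being legitimate because ${\rm Ext}^n_{\mathcal{D}}(-,I)=0={\rm Ext}^n_{\mathcal{D}}(P,-)$ for $n\ge 1$ whenever $I$ is $\mathcal{D}$-injective and $P$ is $\mathcal{D}$-projective). For (i)$\Rightarrow$(iii): given a $\mathcal{D}$-conflation $X\rightarrowtail Y\twoheadrightarrow Z$ with $Y,Z\in\mathcal{A}$ and any $B\in\mathcal{B}$, use enough $\mathcal{D}$-injectives to pick $B\rightarrowtail I\twoheadrightarrow B'$ with $I$ $\mathcal{D}$-injective; then $I\in\mathcal{B}$, hence $B'\in\mathcal{B}$ because $\mathcal{B}$ is $\mathcal{D}$-coresolving, whence ${\rm Ext}^2_{\mathcal{D}}(Z,B)\cong{\rm Ext}^1_{\mathcal{D}}(Z,B')=0$ since $Z\in\mathcal{A}$. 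In the long exact sequence of $X\rightarrowtail Y\twoheadrightarrow Z$ in the first variable against $B$, the term ${\rm Ext}^1_{\mathcal{D}}(Y,B)=0$ (as $Y\in\mathcal{A}$) sits on the left of ${\rm Ext}^1_{\mathcal{D}}(X,B)$ and ${\rm Ext}^2_{\mathcal{D}}(Z,B)=0$ on its right, forcing ${\rm Ext}^1_{\mathcal{D}}(X,B)=0$; as $B$ was arbitrary, $X\in{}^{\perp_{\mathcal{D}}}\mathcal{B}=\mathcal{A}$, so $\mathcal{A}$ is $\mathcal{D}$-resolving. The implication (iii)$\Rightarrow$(i) is the formal dual: for $A\in\mathcal{A}$ pick, using enough $\mathcal{D}$-projectives, a $\mathcal{D}$-conflation $A'\rightarrowtail P\twoheadrightarrow A$ with $P$ $\mathcal{D}$-projective; then $A'\in\mathcal{A}$ since $\mathcal{A}$ is $\mathcal{D}$-resolving, so ${\rm Ext}^2_{\mathcal{D}}(A,X)\cong{\rm Ext}^1_{\mathcal{D}}(A',X)=0$ for every $X\in\mathcal{B}$, and the long exact sequence of any $\mathcal{D}$-conflation $X\rightarrowtail Y\twoheadrightarrow Z$ with $X,Y\in\mathcal{B}$ against such an $A$ gives ${\rm Ext}^1_{\mathcal{D}}(A,Z)=0$, so $Z\in\mathcal{A}^{\perp_{\mathcal{D}}}=\mathcal{B}$ and $\mathcal{B}$ is $\mathcal{D}$-coresolving.

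Everything in the first paragraph is pure bookkeeping, so the only substantial point is the equivalence (i)$\Leftrightarrow$(iii); within it the single ingredient that goes beyond bare manipulation of conflations is the dimension-shifting behaviour of ${\rm Ext}^2_{\mathcal{D}}$ over an exact category. If one prefers to keep the argument strictly inside ${\rm Ext}^1_{\mathcal{D}}$, this step can be replaced by a direct $3\times3$-type construction producing the required splitting out of the $\mathcal{D}$-conflation $B\rightarrowtail I\twoheadrightarrow B'$ (respectively $A'\rightarrowtail P\twoheadrightarrow A$) together with the given $\mathcal{D}$-conflation, at the cost of a longer diagram chase; I would only fall back on this if higher Yoneda ${\rm Ext}$ over exact categories is deemed outside the scope.
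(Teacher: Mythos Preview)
Your bookkeeping for (i)$\Leftrightarrow$(ii) and (iii)$\Leftrightarrow$(iv) matches the paper exactly: both reduce to Theorem~\ref{t:cohcot} once one identifies $\mathcal{B}$ with the $\mathcal{D}$-$\mathcal{E}_1$-divisible objects and $\mathcal{A}$ with the $\mathcal{D}$-$\mathcal{E}_2$-flat objects.

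For (i)$\Leftrightarrow$(iii) you take a genuinely different route. The paper argues (i)$\Rightarrow$(iii) by invoking Proposition~\ref{p:diagram} directly: given $Z\rightarrowtail U\twoheadrightarrow V$ with $U,V\in\mathcal{A}$ and a test conflation $X\rightarrowtail Y\twoheadrightarrow Z$ with $X\in\mathcal{B}$, it embeds $Y$ into a $\mathcal{D}$-injective $Y'$, forms the big two-row diagram of Proposition~\ref{p:diagram}, uses the coresolving hypothesis to get $Z'\in\mathcal{B}$ and hence ${\rm Ext}^1_{\mathcal{D}}(V,Z')=0$, and then applies Proposition~\ref{p:diagram}(2) (with $\mathcal{A}=Flat(\mathcal{D}\textrm{-}\mathcal{E}_2)$) to conclude $Z\in\mathcal{A}$. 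The implication (iii)$\Rightarrow$(i) is declared dual. Your argument instead uses dimension shifting through ${\rm Ext}^2_{\mathcal{D}}$ and the long exact sequence. This is correct and arguably more transparent to a reader versed in homological algebra, but it imports machinery (higher Yoneda Ext and its long exact sequences in exact categories) that the paper never sets up; the paper's approach stays entirely within ${\rm Ext}^1_{\mathcal{D}}$ and the diagram lemma it has already proved. Amusingly, the ``$3\times3$-type construction'' you mention as a fallback is essentially what Proposition~\ref{p:diagram} packages, so your fallback and the paper's main line converge.
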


\begin{proof} Note that $\mathcal{A}$ contains all $\mathcal{D}$-projective objects, $\mathcal{B}$ contains all
$\mathcal{D}$-injective objects, and both $\mathcal{A}$ and $\mathcal{B}$ are closed under $\mathcal{D}$-extensions.

The implications (i)$\Leftrightarrow$(ii) and (iii)$\Leftrightarrow$(iv) follow by Theorem \ref{t:cohcot}.

(i)$\Rightarrow$(iii) Assume that $\mathcal{B}$ is $\mathcal{D}$-coresolving. 
Let $Z\stackrel{j}\rightarrowtail U\stackrel{p}\twoheadrightarrow V$ with $U,V\in
\mathcal{A}$. Let $X\stackrel{i}\rightarrowtail Y\stackrel{d}\twoheadrightarrow Z$ be a $\mathcal{D}$-conflation with
$X\in \mathcal{B}$. Consider a $\mathcal{D}$-inflation $f:Y\to Y'$ for some $\mathcal{D}$-injective object $Y'$ of
$\mathcal{C}$, and denote by $i'=fi:X\to Y'$ the composed $\mathcal{D}$-inflation and by $d':Y'\to Z'$ its cokernel.
Then there is a morphism $g:Z\to Z'$ such that $gd=d'f$. Now consider the pushout of the $\mathcal{D}$-inflation $j$ and
the morphism $g$ to obtain the commutative diagram from Proposition \ref{p:diagram}. Since $X,Y'\in \mathcal{B}$, we
have $Z'\in \mathcal{B}$, and so ${\rm Ext}^1_{\mathcal{D}}(V,Z')=0$. Since $U\in \mathcal{A}$, it follows that
$Z\in \mathcal{A}$ by Proposition \ref{p:diagram}. Hence $\mathcal{A}$ is $\mathcal{D}$-resolving.

(iii)$\Rightarrow$(i) This is dual to (i)$\Rightarrow$(iii).
\end{proof}

In what follows we establish some further characterizations of relatively divisible and relatively flat objects in exact
categories.

\begin{prop} \label{p:ses} Let $\mathcal{C}$ be an additive category, and let $\mathcal{D}$ and $\mathcal{E}$ be exact
structures on $\mathcal{C}$.
\begin{enumerate}
\item Assume that $\mathcal{C}$ has enough $\mathcal{D}$-injectives. Then the following are equivalent for an object $X$
of $\mathcal{C}$:
\begin{enumerate}[(i)]
\item $X$ is $\mathcal{D}$-$\mathcal{E}$-divisible.
\item There exists an $\mathcal{E}$-conflation $X\rightarrowtail Y\twoheadrightarrow Z$, where $Y$ is
$\mathcal{D}$-injective.
\item There exists an $\mathcal{E}$-conflation $X\rightarrowtail Y\twoheadrightarrow Z$, where $Y$ is
$\mathcal{D}$-$\mathcal{E}$-divisible.
\end{enumerate} 
If $\mathcal{E}$ is $\mathcal{D}$-injectively generated by a class of objects $\mathcal{M}$, then they are also
equivalent to:
\begin{enumerate}
\item[(iv)] For every object $M\in \mathcal{M}$, every morphism $X\to M$ factors through a $\mathcal{D}$-injective
object.
\end{enumerate}
\item Assume that $\mathcal{C}$ has enough $\mathcal{D}$-projectives. Then the following are equivalent for an object
$Z$ of $\mathcal{C}$:
\begin{enumerate}[(i)]
\item $Z$ is $\mathcal{D}$-$\mathcal{E}$-flat.
\item There exists an $\mathcal{E}$-conflation $X\rightarrowtail Y\twoheadrightarrow Z$, where $Y$ is
$\mathcal{D}$-projective.
\item There exists an $\mathcal{E}$-conflation $X\rightarrowtail Y\twoheadrightarrow Z$, where $Y$ is
$\mathcal{D}$-$\mathcal{E}$-flat. 
\end{enumerate} 
If $\mathcal{E}$ is $\mathcal{D}$-projectively generated by a class of objects $\mathcal{M}$, then they are also
equivalent to:
\begin{enumerate}
\item[(iv)] For every object $M\in \mathcal{M}$, every morphism $M\to Z$ factors through a $\mathcal{D}$-projective
object.
\end{enumerate}
\end{enumerate}
\end{prop}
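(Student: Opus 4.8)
The plan is to prove part (1) only; part (2) is dual, obtained by reversing arrows and interchanging the roles of $\mathcal{D}$-injectives with $\mathcal{D}$-projectives, inflations with deflations, and $\mathcal{D}$-projectively generated with $\mathcal{D}$-injectively generated exact structures. Within part (1), I would establish the cycle of implications (i)$\Rightarrow$(ii)$\Rightarrow$(iii)$\Rightarrow$(i), and then, under the extra hypothesis that $\mathcal{E}=\iota^{-1}_{\mathcal{D}}(\mathcal{M})$, prove (i)$\Leftrightarrow$(iv) separately.

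For (i)$\Rightarrow$(ii): since $\mathcal{C}$ has enough $\mathcal{D}$-injectives, there is a $\mathcal{D}$-conflation $X\rightarrowtail Y\twoheadrightarrow Z$ with $Y$ being $\mathcal{D}$-injective. As $X$ is $\mathcal{D}$-$\mathcal{E}$-divisible, the $\mathcal{D}$-inflation $X\rightarrowtail Y$ is an $\mathcal{E}$-inflation, so this conflation is an $\mathcal{E}$-conflation, giving (ii). The implication (ii)$\Rightarrow$(iii) is trivial, since every $\mathcal{D}$-injective object is $\mathcal{D}$-$\mathcal{E}$-divisible by Proposition \ref{p:closureDE}(1)(i) applied to the split $\mathcal{D}$-conflations starting at it (or directly: a $\mathcal{D}$-injective object sits inside every $\mathcal{D}$-inflation from it only as a split one, hence trivially $\mathcal{E}$). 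For (iii)$\Rightarrow$(i): suppose we have an $\mathcal{E}$-conflation $X\stackrel{u}\rightarrowtail Y\twoheadrightarrow Z$ with $Y$ being $\mathcal{D}$-$\mathcal{E}$-divisible, and let $X\stackrel{i}\rightarrowtail W$ be an arbitrary $\mathcal{D}$-inflation with cokernel $W\twoheadrightarrow T$. Form the pushout of $i$ and $u$ along $X$; by axiom $[E2^{\rm op}]$ and Lemma \ref{l:POPB}, this produces a commutative diagram with $\mathcal{D}$-conflation rows and columns in which $W$ embeds via a $\mathcal{D}$-inflation into an object $P$ sitting in a $\mathcal{D}$-conflation $Y\rightarrowtail P\twoheadrightarrow T$. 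Since $Y$ is $\mathcal{D}$-$\mathcal{E}$-divisible, $Y\rightarrowtail P$ is an $\mathcal{E}$-inflation; the $\mathcal{E}$-conflation $X\rightarrowtail Y\rightarrowtail P$ composes (using $[E1]$ for $\mathcal{E}$ and its dual, Remark \ref{r:max}) to an $\mathcal{E}$-inflation $X\rightarrowtail P$, which factors through the $\mathcal{D}$-inflation $X\rightarrowtail W$; by the obscure axiom for $\mathcal{E}$ (Remark \ref{r:max}), $X\rightarrowtail W$ is itself an $\mathcal{E}$-inflation. Hence $X$ is $\mathcal{D}$-$\mathcal{E}$-divisible.

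For the equivalence (i)$\Leftrightarrow$(iv) when $\mathcal{E}=\iota^{-1}_{\mathcal{D}}(\mathcal{M})$: by Proposition \ref{p:Ext}(1)... actually here $\mathcal{E}$ is $\mathcal{D}$-\emph{injectively} generated, so I would instead argue directly. Recall $\mathcal{E}=\mathcal{E}^{\mathcal{M}}_G$ consists of those $\mathcal{D}$-conflations rendered short exact by every $G={\rm Hom}_{\mathcal{C}}(-,M)$, $M\in\mathcal{M}$. For (i)$\Rightarrow$(iv): given $X$ is $\mathcal{D}$-$\mathcal{E}$-divisible, use (ii) to get an $\mathcal{E}$-conflation $X\stackrel{v}\rightarrowtail Y\twoheadrightarrow Z$ with $Y$ being $\mathcal{D}$-injective; applying ${\rm Hom}_{\mathcal{C}}(-,M)$ shows ${\rm Hom}_{\mathcal{C}}(Y,M)\to{\rm Hom}_{\mathcal{C}}(X,M)$ is surjective, so every $X\to M$ extends along $v$ to $Y\to M$, i.e.\ factors through the $\mathcal{D}$-injective $Y$. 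For (iv)$\Rightarrow$(i): let $X\stackrel{i}\rightarrowtail W\twoheadrightarrow T$ be any $\mathcal{D}$-conflation; to see it is an $\mathcal{E}$-conflation we must check that ${\rm Hom}_{\mathcal{C}}(W,M)\to{\rm Hom}_{\mathcal{C}}(X,M)$ is onto for each $M\in\mathcal{M}$. Given $\varphi:X\to M$, factor it as $X\stackrel{a}\to J\stackrel{b}\to M$ with $J$ being $\mathcal{D}$-injective; then $a$ extends along the $\mathcal{D}$-inflation $i$ to some $a':W\to J$, and $ba':W\to M$ restricts to $\varphi$ along $i$. Hence the conflation lies in $\mathcal{E}$, proving $X$ is $\mathcal{D}$-$\mathcal{E}$-divisible.

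The main obstacle I anticipate is the careful bookkeeping in (iii)$\Rightarrow$(i): one must correctly assemble the pushout diagram so that the composite $X\rightarrowtail Y\rightarrowtail P$ is genuinely an $\mathcal{E}$-inflation (needing that $\mathcal{E}$, being an exact structure, is closed under composition of inflations) and that it factors through the given $\mathcal{D}$-inflation $X\rightarrowtail W$ (which follows from the pushout's universal property), so that the obscure axiom for $\mathcal{E}$ can be invoked to conclude $X\rightarrowtail W\in\mathcal{E}$. Everything else is a routine diagram chase or a direct application of the $\mathrm{Hom}$-exactness characterization of $\iota^{-1}_{\mathcal{D}}(\mathcal{M})$.
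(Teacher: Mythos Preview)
Your proposal is correct and follows essentially the same route as the paper. The implications (i)$\Rightarrow$(ii)$\Rightarrow$(iii) are identical, and for (iii)$\Rightarrow$(i) both you and the paper form the pushout of the given $\mathcal{E}$-inflation $u:X\rightarrowtail Y$ and the arbitrary $\mathcal{D}$-inflation $i:X\rightarrowtail W$, observe that the induced map $Y\rightarrowtail P$ is a $\mathcal{D}$-inflation (hence an $\mathcal{E}$-inflation by divisibility of $Y$), compose to get the $\mathcal{E}$-inflation $X\rightarrowtail P$, and then apply the obscure axiom to the factorization through $i$. One small slip: you describe $W\rightarrowtail P$ as a $\mathcal{D}$-inflation, but it is the pushout of the $\mathcal{E}$-inflation $u$, so it is an $\mathcal{E}$-inflation (and there is no assumption $\mathcal{E}\subseteq\mathcal{D}$); fortunately your argument never uses this, since the obscure axiom only needs that $i$ has a cokernel and that the composite $wi$ is an $\mathcal{E}$-inflation. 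For (iv) you prove (iv)$\Rightarrow$(i) directly while the paper proves (iv)$\Rightarrow$(ii); the underlying computation---factor $X\to M$ through a $\mathcal{D}$-injective and use its injectivity to extend along the given $\mathcal{D}$-inflation---is the same.
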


\begin{proof} (1) (i)$\Rightarrow$(ii) Assume that $X$ is $\mathcal{D}$-$\mathcal{E}$-divisible. Let $X\rightarrowtail 
Y\twoheadrightarrow M$ be a $\mathcal{D}$-conflation with $Y$ $\mathcal{D}$-injective. By hypothesis, it must be an
$\mathcal{E}$-conflation.

(ii)$\Rightarrow$(iii) This is clear.

(iii)$\Rightarrow$(i) Assume that there exists an $\mathcal{E}$-conflation $X\stackrel{\beta}\rightarrowtail
Y\twoheadrightarrow Z$, where $Y$ is $\mathcal{D}$-$\mathcal{E}$-divisible. Let $\alpha:X\rightarrowtail X'$ be a
$\mathcal{D}$-inflation. Now consider the pushout of $\alpha$ and $\beta$ to obtain the following commutative diagram:
\[\SelectTips{cm}{}
\xymatrix{
X \ar@{>->}[r]^-{\beta} \ar@{>->}[d]_{\alpha} & Y \ar@{->>}[r] \ar@{>->}[d]^{\gamma} & Z \ar@{=}[d] \\ 
X' \ar@{>->}[r]^-{\delta} & Y' \ar@{->>}[r] & Z
}\]
where the last row is an $\mathcal{E}$-conflation. Then the $\mathcal{D}$-inflation $\gamma:Y\to Y'$ is an
$\mathcal{E}$-inflation, because $Y$ is $\mathcal{D}$-$\mathcal{E}$-divisible. It follows that
$\delta\alpha=\gamma\beta$ is an $\mathcal{E}$-inflation, hence $\alpha$ must be an $\mathcal{E}$-inflation. Therefore,
$X$ is $\mathcal{D}$-$\mathcal{E}$-divisible.

(i)$\Rightarrow$(iv) Assume that $X$ is $\mathcal{D}$-$\mathcal{E}$-divisible. Let $M\in \mathcal{M}$ and consider a
morphism $f:X\to M$. There is a $\mathcal{D}$-conflation $X\rightarrowtail Y\twoheadrightarrow Z$ for some
$\mathcal{D}$-injective object $Y$. This must be an $\mathcal{E}$-conflation, because $X$ is
$\mathcal{D}$-$\mathcal{E}$-divisible. But $\mathcal{E}=\mathcal{E}^{\mathcal{M}}_G$, hence $M$ is injective with
respect to $\mathcal{E}$-conflations. Then $f$ factors through the $\mathcal{D}$-injective object $Y$.

(iv)$\Rightarrow$(ii) Assume that (iv) holds. There is a $\mathcal{D}$-conflation $X\stackrel{\alpha}\rightarrowtail
Y\twoheadrightarrow Z$ for some $\mathcal{D}$-injective object $Y$. Let $f:X\to M$ be a morphism in $\mathcal{C}$ with
$M\in \mathcal{M}$. By hypothesis, $f$ factors through a $\mathcal{D}$-injective object $Y'$, hence there exist
morphisms $g:Y'\to M$ and $h:X\to Y'$ such that $f=gh$. By the $\mathcal{D}$-injectivity of $Y'$, there exists a
morphism $\beta:Y\to Y'$ such that $\beta\alpha=h$. Then $g\beta\alpha=f$, hence $M$ is injective with respect to the
$\mathcal{D}$-conflation. But $\mathcal{E}=\mathcal{E}^{\mathcal{M}}_G$, hence the initial $\mathcal{D}$-conflation
must be an $\mathcal{E}$-conflation, which concludes the proof. 
\end{proof}

Relative divisibility and relative flatness may be further generalized with respect to some class of objects. These new
notions are motivated by some subsequent applications to finitely accessible additive categories. 

\begin{defi} \rm Let $\mathcal{C}$ be an additive category, let $\mathcal{D}$ and $\mathcal{E}$ be exact structures on
$\mathcal{C}$ and $\mathcal{A}$ a class of objects in $\mathcal{C}$. An object $X$ of $\mathcal{C}$ is called:
\begin{enumerate}
\item \emph{$\mathcal{D}$-$\mathcal{E}$-$\mathcal{A}$-divisible} if every $\mathcal{D}$-inflation $X\rightarrowtail A$
with $A\in \mathcal{A}$ is an $\mathcal{E}$-inflation.
\item \emph{$\mathcal{D}$-$\mathcal{E}$-$\mathcal{A}$-flat} if every $\mathcal{D}$-deflation $A\twoheadrightarrow X$
with $A\in \mathcal{A}$ is an $\mathcal{E}$-deflation.
\end{enumerate}
\end{defi}

Some closure properties of relatively divisible and relatively flat objects in exact categories from Proposition
\ref{p:closureDE} can be generalized with respect to a class of objects as follows.

\begin{prop} \label{p:closure} Let $\mathcal{C}$ be an additive category, let $\mathcal{D}$ and $\mathcal{E}$ be exact
structures on $\mathcal{C}$, and $\mathcal{A}$ a class of objects in $\mathcal{C}$.  
\begin{enumerate} \item 
\begin{enumerate}[(i)] 
\item Assume that $\mathcal{A}$ is closed under $\mathcal{E}$-deflations. Then the class of
$\mathcal{D}$-$\mathcal{E}$-$\mathcal{A}$-divisible objects of $\mathcal{C}$ is closed under $\mathcal{D}$-extensions. 
\item Assume that $\mathcal{C}$ has products and $\mathcal{E}$ is closed under products. Then the class of
$\mathcal{D}$-$\mathcal{E}$-$\mathcal{A}$-divisible objects of $\mathcal{C}$ is closed under products.
\item Assume that $\mathcal{C}$ has direct limits and $\mathcal{E}$ is closed under direct limits. Let $(X_i,f_{ij})_I$
be a direct system of $\mathcal{D}$-$\mathcal{E}$-$\mathcal{A}$-divisible objects of $\mathcal{C}$ with direct limit
$(\underset{\longrightarrow}\lim \,X_i,f_i)$ such that each $f_{ij}$ is a $\mathcal{D}$-inflation. Then
$\underset{\longrightarrow}\lim \,X_i$ is $\mathcal{D}$-$\mathcal{E}$-$\mathcal{A}$-divisible. In particular, the class
of $\mathcal{D}$-$\mathcal{E}$-$\mathcal{A}$-divisible objects of $\mathcal{C}$ is closed under coproducts.
\end{enumerate}
\item 
\begin{enumerate}[(i)]
\item Assume that $\mathcal{A}$ is closed under $\mathcal{E}$-inflations. Then the class of
$\mathcal{D}$-$\mathcal{E}$-$\mathcal{A}$-flat objects of $\mathcal{C}$ is closed under $\mathcal{D}$-extensions. 
\item Assume that $\mathcal{C}$ has coproducts and $\mathcal{E}$ is closed under coproducts. Then the class of
$\mathcal{D}$-$\mathcal{E}$-$\mathcal{A}$-flat objects of $\mathcal{C}$ is closed under coproducts.
\end{enumerate}
\end{enumerate}
\end{prop}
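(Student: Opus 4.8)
The plan is to give a complete argument for part~(1)(i) --- which carries the only genuinely new content --- and to obtain (1)(ii), (1)(iii) by transcribing the proofs of Proposition~\ref{p:closureDE}(1)(ii),(iii), all of part~(2) being dual. For (1)(i), let $X'\stackrel{i}\rightarrowtail X\stackrel{p}\twoheadrightarrow X''$ be a $\mathcal{D}$-conflation with $X'$ and $X''$ both $\mathcal{D}$-$\mathcal{E}$-$\mathcal{A}$-divisible, and let $j\colon X\rightarrowtail A$ be an arbitrary $\mathcal{D}$-inflation with $A\in\mathcal{A}$; we must show $j$ is an $\mathcal{E}$-inflation. First I would form the composite $ji\colon X'\rightarrowtail A$, a $\mathcal{D}$-inflation since compositions of inflations are inflations (Remark~\ref{r:max}); as $A\in\mathcal{A}$ and $X'$ is $\mathcal{D}$-$\mathcal{E}$-$\mathcal{A}$-divisible, $ji$ is in fact an $\mathcal{E}$-inflation. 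Writing $q\colon A\twoheadrightarrow\bar A$ for its cokernel, the conflation $X'\rightarrowtail A\stackrel{q}\twoheadrightarrow\bar A$ is an $\mathcal{E}$-conflation, so $q$ is an $\mathcal{E}$-deflation, and since $\mathcal{A}$ is closed under $\mathcal{E}$-deflations we obtain $\bar A\in\mathcal{A}$. Because $qji=0$, the morphism $qj$ factors as $qj=\bar j p$ through $p=\operatorname{coker}(i)$, giving a commutative square whose rows are the $\mathcal{D}$-conflations $X'\rightarrowtail X\twoheadrightarrow X''$ and $X'\rightarrowtail A\stackrel{q}\twoheadrightarrow\bar A$ with identity on $X'$. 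By Lemma~\ref{l:POPB}(2) the resulting right-hand square is a pushout, so $\bar A$ is the pushout of $X''\stackrel{p}\twoheadleftarrow X\stackrel{j}\rightarrowtail A$; since $j$ is a $\mathcal{D}$-inflation, axiom~$[E2^{\op}]$ makes $\bar j\colon X''\rightarrowtail\bar A$ a $\mathcal{D}$-inflation, while Lemma~\ref{l:POPB}(1) shows moreover that the pushout square extends to a morphism of conflations with identity on the quotient, so $\operatorname{coker}(\bar j)=\operatorname{coker}(j)=:C$ and $\bar k q=k$, where $k\colon A\twoheadrightarrow C$ and $\bar k\colon\bar A\twoheadrightarrow C$ are the cokernels of $j$ and $\bar j$. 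Now $\bar j$ is a $\mathcal{D}$-inflation into $\bar A\in\mathcal{A}$, so $\mathcal{D}$-$\mathcal{E}$-$\mathcal{A}$-divisibility of $X''$ makes $\bar j$ an $\mathcal{E}$-inflation, hence $\bar k$ an $\mathcal{E}$-deflation; then $k=\bar k q$ is a composite of $\mathcal{E}$-deflations, so $k$ is an $\mathcal{E}$-deflation by $[E1]$ for $\mathcal{E}$, which means $X\stackrel{j}\rightarrowtail A\stackrel{k}\twoheadrightarrow C$ is an $\mathcal{E}$-conflation and $j$ is an $\mathcal{E}$-inflation. Part~(2)(i) is the exact dual: from a $\mathcal{D}$-conflation $Z'\rightarrowtail Z\stackrel{p}\twoheadrightarrow Z''$ with $Z',Z''$ $\mathcal{D}$-$\mathcal{E}$-$\mathcal{A}$-flat and a $\mathcal{D}$-deflation $d\colon A\twoheadrightarrow Z$ with $A\in\mathcal{A}$, one composes to the $\mathcal{D}$-deflation $pd\colon A\twoheadrightarrow Z''$ (an $\mathcal{E}$-deflation by flatness of $Z''$), notes that $W:=\ker(pd)\in\mathcal{A}$ because $W\rightarrowtail A$ is an $\mathcal{E}$-inflation and $\mathcal{A}$ is closed under $\mathcal{E}$-inflations, that the induced $\ker d\rightarrowtail W$ is an $\mathcal{E}$-inflation by flatness of $Z'$, and finally that $\ker d\rightarrowtail W\rightarrowtail A$ is an $\mathcal{E}$-inflation as a composite of $\mathcal{E}$-inflations, i.e.\ $d$ is an $\mathcal{E}$-deflation.

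For (1)(ii) and (1)(iii) I would follow the templates of Proposition~\ref{p:closureDE}(1)(ii),(iii): given a $\mathcal{D}$-inflation $u\colon\prod_\lambda X_\lambda\rightarrowtail A$ (respectively $u\colon\varinjlim X_i\rightarrowtail A$) with $A\in\mathcal{A}$, push $u$ out along each projection $\pi_\lambda$ (respectively restrict $u$ along each structure map $f_i\colon X_i\to\varinjlim X_i$, which is a $\mathcal{D}$-inflation), apply $\mathcal{D}$-$\mathcal{E}$-$\mathcal{A}$-divisibility of each $X_\lambda$ (respectively each $X_i$) to the resulting $\mathcal{D}$-inflations, assemble the pieces using that $\mathcal{E}$ is closed under products (respectively direct limits), and conclude that $u$ is an $\mathcal{E}$-inflation --- for products via the dual of Quillen's obscure axiom (Remark~\ref{r:max}), using that the projections reassemble to $1_{\prod_\lambda X_\lambda}$, and for direct limits directly, since $\varinjlim_i(uf_i)=u$ and $\{j\ge i\}$ is cofinal in the index set. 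Part~(2)(ii) is dual.

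The step I expect to be the main obstacle is the inflation bookkeeping inside (1)(i): one must produce the induced map $\bar j\colon X''\rightarrowtail\bar A$ as a genuine $\mathcal{D}$-inflation \emph{with the correct cokernel} $C=\operatorname{coker}(j)$ and verify the compatibility $\bar k q=k$, so that the two applications of the divisibility hypothesis splice together into a single factorization of $k$ through $\mathcal{E}$-deflations; this is exactly where Lemma~\ref{l:POPB} and axiom~$[E2^{\op}]$ do the work, and an incorrect diagram chase here would break the whole argument. A secondary point to watch in (1)(ii)/(1)(iii) is that the auxiliary objects produced by the pushouts (respectively by restriction along the $f_i$) remain legitimate targets for the $\mathcal{D}$-$\mathcal{E}$-$\mathcal{A}$-divisibility hypothesis.
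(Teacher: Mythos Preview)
Your argument for (1)(i) is correct and is precisely the paper's proof, only with different letters: both form the composite $ji$ (the paper's $f'=jf$), use divisibility of the subobject to place its cokernel $\bar A$ (the paper's $A'$) in $\mathcal{A}$ via closure under $\mathcal{E}$-deflations, apply divisibility of the quotient to the induced $\mathcal{D}$-inflation $X''\rightarrowtail\bar A$, and finish by composing the two $\mathcal{E}$-deflations. Your bookkeeping about $\bar j$, its cokernel $C$, and the identity $\bar kq=k$ is exactly the content of the paper's $3\times 3$ diagram. Your sketch for (1)(iii) also agrees with the paper, since restricting $u$ along the structure maps $f_i$ keeps the target equal to $A\in\mathcal{A}$ and the divisibility hypothesis applies directly.

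The sketch for (1)(ii), however, has a real gap --- the very ``secondary point'' you flag but do not close. Pushing $u$ out along the projection $\pi_\lambda\colon\prod_\mu X_\mu\to X_\lambda$ produces a $\mathcal{D}$-inflation $X_\lambda\rightarrowtail P_\lambda$ whose target $P_\lambda$ has no reason to lie in $\mathcal{A}$, so $\mathcal{D}$-$\mathcal{E}$-$\mathcal{A}$-divisibility of $X_\lambda$ tells you nothing about it. The paper avoids this by going the other way: it composes $u$ with the canonical \emph{split monomorphism} $\alpha_\lambda\colon X_\lambda\to\prod_\mu X_\mu$, obtaining a $\mathcal{D}$-inflation $u\alpha_\lambda\colon X_\lambda\rightarrowtail A$ whose target is still $A\in\mathcal{A}$. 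Divisibility then makes each $u\alpha_\lambda$ an $\mathcal{E}$-inflation, their product is an $\mathcal{E}$-inflation $\prod_\lambda X_\lambda\to A^I$ through which $u$ factors, and the obscure axiom (which you correctly anticipated) forces $u$ to be an $\mathcal{E}$-inflation. If you replace your pushout-along-projections step by this composition-with-sections step, the rest of your plan for (1)(ii) goes through unchanged.
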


\begin{proof} (1) (i) Let $X\stackrel{f}\rightarrowtail Y\stackrel{g}\twoheadrightarrow Z$ be a
$\mathcal{D}$-conflation in
$\mathcal{C}$ such that $X$ and $Z$ are $\mathcal{D}$-$\mathcal{E}$-$\mathcal{A}$-divisible. Let $j:Y\rightarrowtail A$
be a $\mathcal{D}$-inflation with $A\in \mathcal{A}$ and consider the pushout of $g$ and $j$. Then we obtain the
following commutative diagram:
\[\SelectTips{cm}{}
\xymatrix{
X \ar@{>->}[r]^-{f} \ar@{=}[d] & Y \ar@{->>}[r]^-{g} \ar@{>->}[d]_{j} & Z \ar@{>->}[d]^{j'} \\ 
X \ar@{>->}[r]_-{f'} & A \ar@{->>}[r]_-{g'} \ar@{->>}[d]_{p} & A' \ar@{->>}[d]^{p'} \\
& C \ar@{=}[r] & C 
}\] 
where the rows and columns are $\mathcal{D}$-conflations. Since $X$ is
$\mathcal{D}$-$\mathcal{E}$-$\mathcal{A}$-divisible, $g'$ is an $\mathcal{E}$-deflation. Then we have $A'\in
\mathcal{A}$, because $\mathcal{A}$ is closed under $\mathcal{E}$-deflations by hypothesis. Since $Z$ is
$\mathcal{D}$-$\mathcal{E}$-$\mathcal{A}$-divisible, it follows that $p'$ is an $\mathcal{E}$-deflation. Then $p=p'g'$
is an $\mathcal{E}$-deflation, and so $Y$ is $\mathcal{D}$-$\mathcal{E}$-$\mathcal{A}$-divisible.

(ii) Let $(X_i)_{i\in I}$ be a family of $\mathcal{D}$-$\mathcal{E}$-$\mathcal{A}$-divisible objects of
$\mathcal{C}$ and $X=\prod_{i\in I}X_i$. Let $f:X\rightarrowtail A$ be a $\mathcal{D}$-inflation with $A\in
\mathcal{A}$. For every $k\in I$ denote $X'_k=\prod_{i\in I, i\neq k}X_i$ and consider the canonical split monomorphism
$\alpha_k:X_k\to X$. Then $\alpha'_k=f\alpha_k$ is a $\mathcal{D}$-inflation and we obtain a pullback-pushout
commutative diagram of the form:
\[\SelectTips{cm}{}
\xymatrix{
X_k \ar@{>->}[d]_{\alpha_k} \ar@{=}[r] & X_k \ar@{>->}[d]^{\alpha'_k} & \\
X \ar@{>->}[r]^-{f} \ar@{->>}[d]_{\beta_k} & A \ar@{->>}[r]^-{g} \ar@{->>}[d]^{\beta'_k} & Z \ar@{=}[d] \\ 
X'_k \ar@{>->}[r]_-{f'_k} & Y_k \ar@{->>}[r]_-{g'_k} & Z
}\]
Since $X_k$ is $\mathcal{D}$-$\mathcal{E}$-$\mathcal{A}$-divisible, it follows that the second column is an
$\mathcal{E}$-conflation. Then the induced morphism $\prod_{k\in I}\alpha'_k:\prod_{k\in I}X_k\to A^I$ is an
$\mathcal{E}$-inflation. But $\prod_{k\in I}\alpha'_k$ is the composition of morphisms $X=\prod_{k\in
I}X_k\stackrel{f}\to A\stackrel{j}\to A^I$, where $j$ is the canonical monomorphism. Then $f:X\rightarrowtail A$ is an
$\mathcal{E}$-inflation. Therefore, $X$ is $\mathcal{D}$-$\mathcal{E}$-$\mathcal{A}$-divisible.

(iii) Denote $X=\underset{\longrightarrow}\lim \,X_i$. Note that, since each $f_{ij}$ is a $\mathcal{D}$-inflation, so
is each canonical morphism $f_i:X_i\to X$. Let $X\stackrel{u}\rightarrowtail A\stackrel{v}\twoheadrightarrow Z$ be a
$\mathcal{D}$-conflation in $\mathcal{C}$ with $A\in \mathcal{A}$. For every $i\in I$, $u_i=uf_i$ is a
$\mathcal{D}$-inflation and we have a commutative diagram 
\[\SelectTips{cm}{}
\xymatrix{
X_i \ar@{>->}[d]_{f_i} \ar@{>->}[r]^{u_i} & A \ar@{=}[d] \ar@{->>}[r]^{v_i} & Z_i \ar[d] \\
X \ar@{>->}[r]^-{u} & A \ar@{->>}[r]^-{v} & Z \\ 
}\]
Since $X_i$ is $\mathcal{D}$-$\mathcal{E}$-$\mathcal{A}$-divisible, the upper row is an $\mathcal{E}$-conflation.
Now take the direct limit of the $\mathcal{E}$-conflations $X_i\stackrel{u_i}\rightarrowtail
A\stackrel{v_i}\twoheadrightarrow Z$ for $i\in I$ and use the closure of $\mathcal{E}$ under direct limits in order to
deduce that $X\stackrel{u}\rightarrowtail A\stackrel{v}\twoheadrightarrow Z$ is an $\mathcal{E}$-conflation. This shows
that $X=\underset{\longrightarrow}\lim \,X_i$ is $\mathcal{D}$-$\mathcal{E}$-$\mathcal{A}$-divisible.
\end{proof}

\begin{theor} \label{t:covenv} Let $\mathcal{C}$ be an additive category, $\mathcal{D}$ an exact structure on
$\mathcal{C}$, and $(\mathcal{A},\mathcal{B})$ a $\mathcal{D}$-perfect $\mathcal{D}$-cotorsion pair in $\mathcal{C}$. 
\begin{enumerate}
\item Consider an exact structure $\mathcal{E}$ on $\mathcal{C}$ such that $\mathcal{A}$ coincides with the class of
$\mathcal{D}$-$\mathcal{E}$-flat objects of $\mathcal{C}$ and assume that $\mathcal{E}\subseteq \mathcal{D}$. Then the
following are equivalent for an object $X$ of $\mathcal{C}$:
\begin{enumerate}[(i)]
\item $X$ is $\mathcal{D}$-$\mathcal{E}$-$\mathcal{A}$-divisible.
\item The $\mathcal{B}$-envelope of $X$ relative to $\mathcal{D}$ is
$\mathcal{D}$-$\mathcal{E}$-$\mathcal{A}$-divisible.
\item There exists an $\mathcal{E}$-conflation $X\rightarrowtail B\twoheadrightarrow A$, where $A\in \mathcal{A}$
and $B\in \mathcal{B}$ is $\mathcal{D}$-$\mathcal{E}$-$\mathcal{A}$-divisible.
\item There exists an $\mathcal{E}$-conflation $X\rightarrowtail Y\twoheadrightarrow A$, where $A\in \mathcal{A}$ and
$Y$ is $\mathcal{D}$-$\mathcal{E}$-$\mathcal{A}$-divisible.
\end{enumerate}
\item Consider an exact structure $\mathcal{E}$ on $\mathcal{C}$ such that $\mathcal{B}$ coincides with the class of
$\mathcal{D}$-$\mathcal{E}$-divisible objects of $\mathcal{C}$ and assume that $\mathcal{E}\subseteq \mathcal{D}$. Then
the following are equivalent for an object $Z$ of $\mathcal{C}$:
\begin{enumerate}[(i)]
\item $Z$ is $\mathcal{D}$-$\mathcal{E}$-$\mathcal{B}$-flat.
\item The $\mathcal{A}$-cover of $Z$ relative to $\mathcal{D}$ is $\mathcal{D}$-$\mathcal{E}$-$\mathcal{B}$-flat.
\item There exists an $\mathcal{E}$-conflation $B\rightarrowtail A\twoheadrightarrow Z$, where $A\in \mathcal{A}$ is
$\mathcal{D}$-$\mathcal{E}$-$\mathcal{B}$-flat and $B\in \mathcal{B}$.
\item There exists an $\mathcal{E}$-conflation $B\rightarrowtail Y\twoheadrightarrow Z$, where $Y$ is
$\mathcal{D}$-$\mathcal{E}$-$\mathcal{B}$-flat and $B\in \mathcal{B}$.
\end{enumerate}
\end{enumerate}
\end{theor}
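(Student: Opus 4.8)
The plan is to prove part~(1) and to obtain part~(2) by dualising every step (interchanging inflations and deflations, pushouts and pullbacks, $\mathcal{B}$-envelopes and $\mathcal{A}$-covers, divisibility and flatness, and the roles of $\mathcal{A}$ and $\mathcal{B}$). For~(1) I would establish the cycle (iii)$\Rightarrow$(iv)$\Rightarrow$(i)$\Rightarrow$(ii)$\Rightarrow$(iii). First I would record a preliminary observation used throughout: since $(\mathcal{A},\mathcal{B})$ is $\mathcal{D}$-perfect, every object $X$ has a $\mathcal{B}$-envelope $\iota:X\rightarrowtail B$ relative to $\mathcal{D}$; as $\mathcal{B}$ is closed under $\mathcal{D}$-extensions, a Wakamatsu-type argument (as in the proof of Theorem~\ref{t:extensions}) shows that the cokernel $A$ of $\iota$ lies in ${}^{\perp_{\mathcal{D}}}\mathcal{B}=\mathcal{A}$; and since $\mathcal{A}=Flat(\mathcal{D}\textrm{-}\mathcal{E})$, the $\mathcal{D}$-deflation $B\twoheadrightarrow A$ is an $\mathcal{E}$-deflation, so $X\rightarrowtail B\twoheadrightarrow A$ is an $\mathcal{E}$-conflation with $A\in\mathcal{A}$ and $B\in\mathcal{B}$. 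This gives (ii)$\Rightarrow$(iii) immediately (taking this $B$, which by~(ii) is $\mathcal{D}$-$\mathcal{E}$-$\mathcal{A}$-divisible), while (iii)$\Rightarrow$(iv) is trivial ($Y:=B$). I would also note in passing that (i)$\Rightarrow$(iv) is immediate, using the split $\mathcal{E}$-conflation $X\rightarrowtail X\twoheadrightarrow 0$ with $0\in\mathcal{A}$ and $Y:=X$ divisible by~(i), so that (i) and~(iv) are in fact equivalent almost for free.

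For (iv)$\Rightarrow$(i): given an $\mathcal{E}$-conflation $X\stackrel{\beta}\rightarrowtail Y\twoheadrightarrow A$ with $A\in\mathcal{A}$ and $Y$ $\mathcal{D}$-$\mathcal{E}$-$\mathcal{A}$-divisible, and a $\mathcal{D}$-inflation $\alpha:X\rightarrowtail A'$ with $A'\in\mathcal{A}$, I would take the pushout of $\alpha$ and $\beta$. The pushed-out row $A'\rightarrowtail Y'\twoheadrightarrow A$ is an $\mathcal{E}$-conflation, being the pushout of the $\mathcal{E}$-inflation $\beta$ (axiom $[E2^{\rm op}]$ for $\mathcal{E}$), hence in particular a $\mathcal{D}$-conflation with both ends in $\mathcal{A}$; as $\mathcal{A}$ is closed under $\mathcal{D}$-extensions, $Y'\in\mathcal{A}$. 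The pushout map $Y\rightarrowtail Y'$, being the pushout of the $\mathcal{D}$-inflation $\alpha$, is a $\mathcal{D}$-inflation, hence an $\mathcal{E}$-inflation since $Y$ is $\mathcal{D}$-$\mathcal{E}$-$\mathcal{A}$-divisible and $Y'\in\mathcal{A}$. Commutativity of the pushout square makes the composite of $\alpha$ with $A'\rightarrowtail Y'$ equal to the composite of $\beta$ with $Y\rightarrowtail Y'$, a composite of $\mathcal{E}$-inflations, hence an $\mathcal{E}$-inflation; the obscure axiom for $\mathcal{E}$ (applicable since $\alpha$ has a cokernel) then forces $\alpha$ to be an $\mathcal{E}$-inflation, so $X$ is $\mathcal{D}$-$\mathcal{E}$-$\mathcal{A}$-divisible.

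The implication (i)$\Rightarrow$(ii) is where the real work lies, and I expect it to be the main obstacle. Let $\iota:X\rightarrowtail B$ be the $\mathcal{B}$-envelope with $X\rightarrowtail B\stackrel{\pi}\twoheadrightarrow A$ as above; given a $\mathcal{D}$-inflation $j:B\rightarrowtail A'$ with $A'\in\mathcal{A}$ I must show $j$ is an $\mathcal{E}$-inflation. The composite $j\iota:X\rightarrowtail A'$ is a $\mathcal{D}$-inflation into $A'\in\mathcal{A}$, hence an $\mathcal{E}$-inflation by~(i); set $E:=\mathrm{coker}(j\iota)$, so $X\rightarrowtail A'\stackrel{q}\twoheadrightarrow E$ is an $\mathcal{E}$-conflation. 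Arranging the $\mathcal{D}$-conflations $X\rightarrowtail B\stackrel{\pi}\twoheadrightarrow A$ and $X\rightarrowtail A'\stackrel{q}\twoheadrightarrow E$ in a commutative ladder via $\mathrm{id}_X$ and $j$, with induced $\mathcal{D}$-inflation $\bar{\jmath}:A\rightarrowtail E$ (so $\mathrm{coker}(\bar{\jmath})=\mathrm{coker}(j)=:D$), Lemma~\ref{l:POPB}(2) shows the resulting square on $B,A,A',E$ is both a pushout and a pullback. Since it is a pullback and $q$ is an $\mathcal{E}$-deflation, $j$ is the pullback of $\bar{\jmath}$ along $q$, so — the pullback of an inflation along a deflation being an inflation, a routine consequence of the axioms — it suffices to show $\bar{\jmath}$ is an $\mathcal{E}$-inflation, equivalently that the $\mathcal{D}$-deflation $E\twoheadrightarrow D$ is an $\mathcal{E}$-deflation; and since $\mathcal{A}=Flat(\mathcal{D}\textrm{-}\mathcal{E})$, this holds as soon as $D\in\mathcal{A}$. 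The crux is therefore to prove $D\in\mathcal{A}$, and here I would exploit the \emph{minimality} of the $\mathcal{B}$-envelope $\iota$ in a Wakamatsu-type diagram chase (again paralleling Theorem~\ref{t:extensions}) to show that every morphism $B\to B''$ with $B''\in\mathcal{B}$ factors through $j$; then the connecting homomorphism $\mathrm{Hom}_{\mathcal{C}}(B,B'')\to\mathrm{Ext}^1_{\mathcal{D}}(D,B'')$ in the long exact sequence of $B\stackrel{j}\rightarrowtail A'\twoheadrightarrow D$ is zero, whence $\mathrm{Ext}^1_{\mathcal{D}}(D,B'')=0$ for every $B''\in\mathcal{B}$, i.e.\ $D\in{}^{\perp_{\mathcal{D}}}\mathcal{B}=\mathcal{A}$. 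This closes the cycle; turning the envelope's minimality into the required factorisation property is the delicate point.

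Part~(2) follows by the dual argument: replace the $\mathcal{B}$-envelope of $X$ by the $\mathcal{A}$-cover $A\twoheadrightarrow Z$ of $Z$ relative to $\mathcal{D}$, whose kernel lies in $\mathcal{A}^{\perp_{\mathcal{D}}}=\mathcal{B}$ by the dual Wakamatsu argument and which now sits in an $\mathcal{E}$-conflation because $\mathcal{B}=Div(\mathcal{D}\textrm{-}\mathcal{E})$; throughout, interchange pushouts with pullbacks, inflations with deflations, the obscure axiom with its version for deflations, and the left-orthogonality conditions with right-orthogonality ones.
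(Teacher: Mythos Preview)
Your arguments for (iii)$\Rightarrow$(iv), (iv)$\Rightarrow$(i), and (ii)$\Rightarrow$(iii) are correct and essentially coincide with the paper's (your explicit invocation of the Wakamatsu argument to get $\mathrm{coker}(\iota)\in\mathcal{A}$ is in fact a point the paper leaves implicit). The reduction you set up in (i)$\Rightarrow$(ii)---showing that $j$ is an $\mathcal{E}$-inflation is equivalent to $D:=\mathrm{coker}(j)\in\mathcal{A}$, which in turn (since $B\in\mathcal{B}$) is equivalent to $j$ being split monic---is also correct.

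The genuine gap is the final step of (i)$\Rightarrow$(ii): you assert that a ``Wakamatsu-type diagram chase'' using minimality of $\iota$ yields the factorisation of every $B\to B''$ through $j$, but no such direct chase is visible. The standard Wakamatsu argument (as in Theorem~\ref{t:extensions}) controls the cokernel of the envelope $\iota$ itself, not the cokernel of an arbitrary $\mathcal{D}$-inflation $j$ out of $B$ into an object of $\mathcal{A}$; attempts to pull back a test conflation $B''\rightarrowtail W\twoheadrightarrow D$ along $A'\twoheadrightarrow D$ only produce a split conflation over $A'$ (since $A'\in\mathcal{A}$), which does not by itself force the original conflation to split. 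The paper supplies the missing idea: rather than attacking $D$ directly, one \emph{composes further} with the $\mathcal{B}$-envelope $A'\rightarrowtail B(A')$. Then $B(A')\in\mathcal{A}\cap\mathcal{B}$ (extension of two objects of $\mathcal{A}$), so hypothesis~(i) applies to the composite $\alpha:X\rightarrowtail B(A')$, giving an $\mathcal{E}$-inflation whose cokernel $Z$ lies in $\mathcal{A}$ (closure of $\mathcal{A}=Flat(\mathcal{D}\textrm{-}\mathcal{E})$ under $\mathcal{E}$-deflations). Now the pushout of $\iota$ and $\alpha$ yields a $\mathcal{D}$-conflation $B\rightarrowtail Y\twoheadrightarrow Z$ which splits (as $Z\in\mathcal{A}$, $B\in\mathcal{B}$); the Homotopy Lemma then produces $\delta:B(A')\to B$ with $\delta\alpha=\iota$, and minimality of $\iota$ forces the composite $B\to A'\to B(A')\stackrel{\delta}{\to} B$ to be an automorphism. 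Hence $j:B\rightarrowtail A'$ is split monic. This extra passage to $B(A')$ is precisely what converts the minimality of $\iota$ into control over $j$; without it your outline does not close.
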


\begin{proof} (1) The existence of the required exact structure $\mathcal{E}$ on $\mathcal{C}$ follows by Proposition 
\ref{p:identif}.

(i)$\Rightarrow$(ii) Assume that $X$ is $\mathcal{D}$-$\mathcal{E}$-$\mathcal{A}$-divisible. Consider
the $\mathcal{B}$-envelope $X\rightarrowtail B(X)$ of $X$ relative to $\mathcal{D}$, which does exist and is a
$\mathcal{D}$-inflation, because the $\mathcal{D}$-cotorsion pair $(\mathcal{A},\mathcal{B})$ is $\mathcal{D}$-perfect.
Let $B(X)\rightarrowtail A$ be a $\mathcal{D}$-inflation with $A\in \mathcal{A}$. Consider the $\mathcal{B}$-envelope
$A\rightarrowtail B(A)$ of $A$ relative to $\mathcal{D}$ and the induced composition $X\rightarrowtail
B(X)\rightarrowtail A\rightarrowtail B(A)$ of morphisms, which we denote by $\alpha$. Then $\alpha:X\to B(A)$ is a
$\mathcal{D}$-inflation, as a composition of $\mathcal{D}$-inflations. Consider the induced $\mathcal{D}$-conflations
$A\rightarrowtail B(A)\twoheadrightarrow A'$ and $X\rightarrowtail
B(A)\twoheadrightarrow Z$. Since $A,A'\in \mathcal{A}$ and $\mathcal{A}$ is closed under $\mathcal{D}$-extensions, we
have $B(A)\in \mathcal{A}$. Since $X$ is $\mathcal{D}$-$\mathcal{E}$-$\mathcal{A}$-divisible, it follows that
$\alpha:X\to B(A)$ is an $\mathcal{E}$-inflation. Then $Z\in \mathcal{A}$, because $\mathcal{A}$ is closed under
$\mathcal{E}$-deflations by Propositions \ref{p:closureDE} and \ref{p:identif}. We have the following induced
commutative diagram:
\[\SelectTips{cm}{}
\xymatrix{
X \ar@{>->}[r]^-{\beta} \ar@{=}[d] & B(X) \ar@{-->}[d]^{\gamma} & \\ 
X \ar@{>->}[r]^-{\alpha} \ar@{>->}[d]_-{\beta} & B(A) \ar@{>->}[d] \ar@{->>}[r] \ar@{-->}[dl]^{\delta} & Z
\ar@{=}[d] \\
B(X) \ar@{>->}[r] & Y \ar@{->>}[r] & Z 
}\] 
where the lower $\mathcal{D}$-conflation is obtained by taking the pushout of $\alpha$ and $\beta$. The morphism
$\gamma:B(X)\to B(A)$ such that $\gamma\beta=\alpha$ does exist because $B(X)$ is a $\mathcal{B}$-preenvelope of $X$
(relative to $\mathcal{D}$).
Since $Z\in \mathcal{A}$ and $B(X)\in \mathcal{B}$, it follows that ${\rm Ext}^1_{\mathcal{D}}(Z,B(X))=0$, hence the
lower $\mathcal{D}$-conflation splits. By the Homotopy Lemma \cite[7.16]{Wisb} there exists the required morphism
$\delta:B(A)\to B(X)$ such that $\delta\alpha=\beta$. Then $\delta\gamma\beta=\beta$, whence it follows that
$\delta\gamma$ is an automorphism, using the $\mathcal{B}$-envelope (relative to $\mathcal{D}$) condition for $B(X)$.
Hence $\gamma:B(X)\to B(A)$ is a split monomorphism, which implies that $B(X)\rightarrowtail A$ is a split monomorphism,
and so it is an $\mathcal{E}$-inflation. Therefore, $B(X)$ is $\mathcal{D}$-$\mathcal{E}$-$\mathcal{A}$-divisible.

(ii)$\Rightarrow$(iii) By hypothesis, there is a $\mathcal{D}$-conflation $X\rightarrowtail B(X)\twoheadrightarrow A$,
where $A\in \mathcal{A}$ and $B(X)$ is $\mathcal{D}$-$\mathcal{E}$-$\mathcal{A}$-divisible. Since every object of
$\mathcal{A}$ is $\mathcal{D}$-$\mathcal{E}$-flat, the $\mathcal{D}$-conflation is also an $\mathcal{E}$-conflation. 

(iii)$\Rightarrow$(iv) This is clear.

(iv)$\Rightarrow$(i) Assume that there exists an $\mathcal{E}$-conflation $X\stackrel{\beta}\rightarrowtail
Y\twoheadrightarrow A$, where $A\in \mathcal{A}$ and $Y$ is $\mathcal{D}$-$\mathcal{E}$-$\mathcal{A}$-divisible. Let
$\alpha:X\rightarrowtail A'$ be a $\mathcal{D}$-inflation with $A'\in \mathcal{A}$. Now consider the pushout of $\alpha$
and $\beta$ to obtain the following commutative diagram:
\[\SelectTips{cm}{}
\xymatrix{
X \ar@{>->}[r]^-{\beta} \ar@{>->}[d]_{\alpha} & Y \ar@{->>}[r] \ar@{>->}[d]^{\gamma} & A \ar@{=}[d] \\ 
A' \ar@{>->}[r]^-{\delta} & Y' \ar@{->>}[r] & A 
}\]
where the last row is an $\mathcal{E}$-conflation. Since $\mathcal{E}\subseteq \mathcal{D}$ and $\mathcal{A}$ is closed
under $\mathcal{D}$-extensions, we have $Y'\in \mathcal{A}$. Then the $\mathcal{D}$-inflation $\gamma:Y\to Y'$ is an
$\mathcal{E}$-inflation, because $Y$ is $\mathcal{D}$-$\mathcal{E}$-$\mathcal{A}$-divisible. It follows that
$\delta\alpha=\gamma\beta$ is an $\mathcal{E}$-inflation, hence $\alpha$ must be an $\mathcal{E}$-inflation. Therefore,
$X$ is $\mathcal{D}$-$\mathcal{E}$-$\mathcal{A}$-divisible. 
\end{proof}


\begin{thebibliography}{99}

\bibitem{Az87} G. Azumaya, \emph{Finite splitness and finite projectivity}, J. Algebra {\bf 106} (1987), 114--134.

\bibitem{Buch} D. Buchsbaum, \emph{A note on homology in categories}, Ann. of Math. {\bf 69} (1959), 66--74.

\bibitem{Buhler} T. B\"uhler, \emph{Exact categories}, Expo. Math. {\bf 28} (2010), 1--69.

\bibitem{BD14} E. B\"uy\"uka\c{s}{\i}k and Y. Dur\~{g}un, \emph{Coneat submodules and coneat-flat modules},
J. Korean Mah. Soc. {\bf 51} (2014), 1305--1319.

\bibitem{BD16} E. B\"uy\"uka\c{s}{\i}k and Y. Dur\~{g}un, \emph{Neat-flat modules}, Comm. Algebra {\bf 44} (2016),
416--428.

\bibitem{CC} I. Crivei and S. Crivei, \emph{Absolutely $s$-pure modules}, Automat. Comput. Appl. Math. {\bf 6} (1998),
25--30.

\bibitem{C12} S. Crivei, \emph{Maximal exact structures on additive categories revisited}, Math. Nachr. {\bf 285}
(2012), 440--446.

\bibitem{Sep} S. Crivei, \emph{Neat and coneat submodules of modules over commutative rings}, Bull. Aust. Math. Soc.
{\bf 89} (2014), 343--352.

\bibitem{CK2} S. Crivei and D. Kesk\.{i}n T\"ut\"unc\"u, 
\emph{Relatively divisible and relatively flat objects in exact categories: Applications}, preprint 2018. 

\bibitem{EJ} E.E. Enochs and O.M.G. Jenda, \emph{Relative homological algebra}, Walter de Gruyter, Berlin, New York,
2000.

\bibitem{Erne} M. Ern\'e, J. Koslowski, A. Melton and G.E. Strecker, \emph{A primer on Galois connections}. In:
Proceedings of the 1991 Summer Conference on General Topology and Applications in Honor of Mary Ellen Rudin and Her
Work, Annals of the New York Academy of Sciences, {\bf 704}, 1993, pp. 103--125.

\bibitem{FS} L. Frerick and D. Sieg, \emph{Exact categories in functional analysis}, preprint 2010, 
www.mathematik.uni-trier.de:8080/abteilung/analysis/HomAlg.pdf.

\bibitem{Fuc} L. Fuchs, \emph{Neat submodules over integral domains}, Period. Math. Hungar. {\bf 64} (2012),
131--143.

\bibitem{G04} G. Garkusha, \emph{Relative homological algebra for the proper class $\omega_f^\#$}, Comm. Algebra {\bf
32} (2004), 4043--4072.

\bibitem{G} J. Gillespie, \emph{Model structures on exact categories}, J. Pure Appl. Algebra {\bf 215} (2011),
2892--2902.

\bibitem{GSW} R. G\"obel, S. Shelah and S.L. Wallutis, \emph{On the lattice of cotorsion theories}, J. Algebra {\bf
238} (2001), 292--313.

\bibitem{Hovey} M. Hovey, \emph{Cotorsion pairs, model category structures, and representation theory}, Math. Z. {\bf
241} (2002), 553--592.

\bibitem{Karoubi} M. Karoubi, \emph{Alg\`{e}bres de Clifford et K-th\'eorie}, Ann. Sci. \'Ec. Norm. Sup\'er. (4) {\bf
1} (1968), 161--270.

\bibitem{Keller} B. Keller, \emph{Chain complexes and stable categories}, Manuscripta Math. {\bf 67} (1990), 379--417.

\bibitem{Kelly} G.M. Kelly, \emph{Monomorphisms, epimorphisms and pull-backs}, J. Austral. Math. Soc. {\bf 9}
(1969),124--142.

\bibitem{Maddox} B.H. Maddox, \emph{Absolutely pure modules}, Proc. Amer. Math. Soc. {\bf 18} (1967), 155--158.

\bibitem{Megibben} C. Megibben, \emph{Absolutely pure modules}, Proc. Amer. Math. Soc. {\bf 26} (1970), 561--566. 

\bibitem{Engin} E. Mermut, {\it Homological approach to complements  and supplements}, Ph.D. Thesis, Dokuz Eyl\"ul
University, \.Izmir, 2004.

\bibitem{Montano} C.F. Preisser Monta\~no, {\it Proper classes of short exact sequences and structure theory of
modules}, Ph.D. Thesis, Heinrich-Heine University, D\"usseldorf, 2010.

\bibitem{Q} D. Quillen, \emph{Higher algebraic $K$-theory. I}, Algebraic $K$-theory, I: Higher $K$-theories (Proc.
Conf., Battelle Memorial Inst., Seattle, Wash., 1972), Springer, Berlin, 1973, pp. 85--147. Lecture Notes in Math.
{\bf 341}.

\bibitem{Renault} G. Renault, \emph{\'Etude de certains anneaux $A$ lies aux sous-modules complements d'un $A$-module},
C.R. Acad. Sc. Paris {\bf 259} (1964), 4203--4205.

\bibitem{Rosenberg} A. Rosenberg, \emph{Homological algebra of non-commutative spaces, I}, preprint Max Planck, Bonn:
MPIM2008-91, http://www.mpim-bonn.mpg.de/preprints.

\bibitem{Rump01} W. Rump, \emph{Almost abelian categories}, Cah. Topologie G\'eom. Diff\'er. Cat\'egoriques {\bf
42} (2001), 163--225.

\bibitem{Salce} L. Salce, \emph{Cotorsion theories for abelian groups}, Symposia Math. {\bf 23} (1979), 11--32.

\bibitem{Skly} E.G. Sklyarenko, \emph{Relative homological algebra in categories of modules}, Russian Math. Surveys
{\bf 33} (1978), 97--137.

\bibitem{St67} B. Stenstr\"om, \emph{Pure submodules}, Ark. Math. {\bf 10} (1967), 159--171.

\bibitem{Wisb} R. Wisbauer, \emph{Foundations of module and ring theory}, Gordon and Breach, Reading, 1991.

\bibitem{Xiang} Y. Xiang, \emph{Max-injective, max-flat modules and max-coherent rings}, Bull. Korean Math. Soc.
{\bf 47} (2010), 611--622.

\bibitem{Xu} J. Xu, \emph{Flat covers of modules}, Lecture Notes in Math. {\bf 1634}, Springer, Berlin, 1996.

\bibitem{Z06} H. Z\"oschinger, \emph{Schwach-injektive Moduln}, Period. Math. Hungar. {\bf 52} (2006), 105--128.

\bibitem{Z13} H. Z\"oschinger, \emph{Schwach-flache Moduln}, Comm. Algebra {\bf 41} (2013), 4393--4407.


\end{thebibliography}
\end{document}